\documentclass[11 pt]{amsart}
\topmargin -0.3in \headsep 0.3in \oddsidemargin 0in \evensidemargin
0in \textwidth 6.5in \textheight 9in

\usepackage[all]{xy}

\usepackage{amssymb,amsmath,color}

\newtheorem{theorem}{Theorem}[subsection]

\newtheorem{co}[theorem]{Corollary}
\newtheorem{lemma}[theorem]{Lemma}

\newtheorem{cor}[theorem]{Corollary}
\newtheorem{definition}[theorem]{Definition}
\newtheorem{prop}[theorem]{Proposition}
\theoremstyle{remark}
\newtheorem{remark}[theorem]{Remark}

\numberwithin{equation}{section}

\def\t{\textnormal}
\def\l{{\mathfrak l}}
\def\g{{\gamma}}

\def\O{{\omega}}

\newcommand{\BA}{{\mathbb A}}
\newcommand{\BQ}{{\mathbb Q}}
\newcommand{\BZ}{{\mathbb Z}}

\newcommand{\G}{{\Gamma}}
\newcommand{\SL}{{SL_2(\BZ)}}

 \let\a\alpha  \let\b\beta    \let\d\delta
  \let\g\gamma
  \let\l\lambda   
      
\let\GL\Lambda
\def\C{\mathbb C}
\def\G{\mathbf G}

\def\g{\gamma}
\def \gal{\textnormal{Gal}}
\def\GL{\mathbf{GL}}
\def \GL2 {{\text{GL}_2}}

\def \inj{\hookrightarrow}
\def \gl {\mathfrak l}
\def \gp{\mathfrak p}

\def\Tr{{\rm Tr}}
\def\Gal{{\rm Gal}}

\def\F{{\mathbb F}}
  
\def\Z{{\mathbb Z}}

\def\Q{{\mathbb Q}}

\def\O{\mathcal O}

\def\P{\wp}

\def\G{\Gamma}

\def\ord{{\mathrm ord}}

\begin{document}

\title[Galois representations with QM]{Galois representations with quaternion multiplication associated to noncongruence modular forms }
\author[A.O.L. Atkin]{A.O.L. Atkin $^1$}
\thanks{$^1$posthumous}
\author{Wen-Ching Winnie Li}
\address{Department of Mathematics, Pennsylvania State University,
University Park, PA 16802, USA ~and National Center for Theoretical Sciences, Mathematics Division,
National Tsing Hua University, Hsinchu 30013, Taiwan, R.O.C.} \email{wli@math.psu.edu}

\author{Tong Liu}
\address{Department of Mathematics, Purdue University, West Lafayette, IN 47907, USA}\email{tongliu@math.purdue.edu}

\author{Ling Long}
\address{Department of Mathematics, Iowa State University, Ames, IA
  50011, USA}
\email{linglong@iastate.edu}

\subjclass[2000]{Primary 11F11; secondary; 11F80 }

\thanks{}
\thanks{
The second author is supported in part by the NSF grant DMS-0801096, the third author by the NSF grant DMS-0901360 and the fourth author by the NSA grant
H98230-08-1-0076 and the NSF grant DMS-1001332. Part of the paper was written when the fourth author was visiting the National Center for Theoretical Sciences in Hsinchu, Taiwan, and the University of California at Santa Cruz. She would like to thank both institutions for their hospitality.}

\maketitle
\begin{abstract}
In this paper we study the compatible family of degree-$4$  Scholl representations $\rho_{\ell}$
associated with a space $S$ of weight $\kappa>  2$ noncongruence cusp forms satisfying Quaternion Multiplication over
a biquadratic  extension of $\Q$. It is shown that  $\rho_\ell$ is automorphic, that is, its associated L-function has the same Euler factors as the L-function of an automorphic form for $\t{GL}_4$ over $\Q$. Further, it yields a relation between the Fourier coefficients of noncongruence cusp forms in $S$ and those of certain automorphic forms via the three-term Atkin and Swinnerton-Dyer congruences.
\end{abstract}

\tableofcontents
\section{Introduction}

To
a $d$-dimensional space $S_{\kappa}(\G)$ of cusp forms of weight $\kappa > 2$
for a noncongruence subgroup $\G$ under general
assumptions, in \cite{sch85b} Scholl attached a compatible family of
$2d$-dimensional $\ell$-adic representations $\rho_\ell$ of $G:=G_\Q=\gal(\overline \Q/\Q)$. Due to the motivic
nature of Scholl's construction, one
expects these representations to be  \emph{automorphic} in the sense that they are related to automorphic forms as predicted
by Langlands. For
general Scholl representations, this is too much to hope for at present, owing to the
 ineffective Hecke operators on noncongruence modular forms and the currently
available modularity techniques;  but it is achievable if the space
admits extra symmetries, as shown in \cite{lly05,all05,long061}. In each of these examples, the automorphy was established using the Faltings-Serre modularity technique, which becomes inefficient for general situation. On the other hand, tremendous progress in modularity has been made in recent years. In this paper, we prove the automorphy of degree-$4$ Scholl representations which admit quaternion multiplication by using modern technology. Moreover, the Atkin and Swinnerton-Dyer congruences for forms in the underlying space are also established.
\smallskip

Now we outline our method and main results. The recent settlement of Serre's
 conjecture over $\Q$ (cf.  Theorem \ref{serre}) by Khare, Wintenberger and
 Kisin (\cite{KW09}, \cite{kisin9}) is useful for establishing the automorphy of 2-dimensional
 representations of $G$ coming from geometry. In our situation, the
 Scholl representations $\rho_{\ell}$  were constructed from geometry, as a variation of  Deligne's construction of $\ell$-adic Galois representations for congruence cusp forms. They have  Hodge-Tate weights 0 and $1-\kappa$, each of multiplicity $d$, and all eigenvalues of the characteristic polynomial of a geometric Frobenius element are algebraic integers with the same complex absolute value. When $d=1$, by appealing to the now established Serre's conjecture, one concludes that $\rho_{\ell}$ arises from a congruence newform. When $d>1$, all existing potential automorphy criteria, such as \cite{BLTDR10}, assume the regularity condition that the Hodge-Tate weights are distinct, hence they cannot be applied to Scholl representations directly. In this paper we consider the case $d=2$ and the representation $\rho_\ell$ has Quaternion Multiplication (QM) over a field $K$. We prove that when $K$ is quadratic over $\Q$, after extending the scalar field,  $\rho_\ell$ decomposes into { the sum of two modular representations (Theorem \ref{QMoverquadratic}).}  In the more interesting case that $K$ is biquadratic over $\Q$, we show that $\rho_\ell$ is a tensor product of two 2-dimensional projective representations $\tilde \rho$ and $\tilde \g$ of $G$, in which $\tilde \g$ has finite image.
By using Tate's vanishing theorem ${\rm{H}}^2(G, \mathbb C^\times)= 0$, we lift $\tilde \g$ to an  (ordinary) representation $\g$ of $G$ with finite image. Consequently, $\tilde \rho$ can be lifted to a representation $\eta$ of $G$ such that $\rho_\ell=\eta\otimes\g$ is a tensor product of two degree $2$ representations of $G$. Of these $\g$ is induced from a finite character of the absolute Galois group $G_F$ of a quadratic field $F$ contained in $K$,
hence is automorphic. Moreover, it is odd if  $K$ is not totally real, and even otherwise { (Theorem \ref{thm:exist-chi_u} and Proposition \ref{oddness})}. The  other component $\eta$ is shown to be modular, arising from a weight $\kappa$ newform of a congruence subgroup { (Theorem \ref{thm:3.2.1})}. In fact, for each quadratic $F$ contained in $K$,  $\rho_\ell$ is induced from a degree $2$ representation of $G_F$ which comes from an automorphic form of $\rm{GL}_2(\mathbb A_F)$ { (Theorem \ref{QMdescent})}. Consequently, for $K$ biquadratic over $\Q$, the automorphy of $\rho_\ell$ can be seen in many ways: in addition to what is described above, it also corresponds to an automorphic representation of $\rm{GL}_2(\mathbb A_\Q) \times \rm{GL}_2(\mathbb A_\Q)$, which also implies that it comes from an automorphic representation of $\rm{GL}_4(\mathbb A_\Q)$ by a result of Ramakrishnan \cite{Ramakrishnan00} { (Remark 4.2.5)}.
\smallskip

 The automorphy of Scholl representations is
useful for understanding arithmetic properties of noncongruence modular forms. An application of the above modularity/automorphy result is an intriguing link between the coefficients of noncongruence cusp forms and congruence automorphic forms via Atkin and Swinnerton-Dyer congruences. See \S4.3 for details. Moreover,  automorphy and Atkin and Swinnerton-Dyer congruences combined can  be used to establish the $d=1$ case of the unbounded denominator conjecture, a fundamental characterization for the Fourier coefficients of genuine noncongruence modular forms (cf. \cite{ll10}). Our approach can be extended to handle other kinds of symmetries in a more general context, which will be dealt with in our future work.

   \smallskip

 This paper is organized in the following manner. In \S2, we obtain a simpler modularity criterion for 2-dimensional representation of $G$.  \S3 is devoted to the study of 4-dimensional Galois representations with QM over a biquadratic field.
 In \S4, we prove the automorphy of Scholl representations of $G$ for the cases $d=1$ and $d=2$ with QM, as well as the implications to Atkin and Swinnerton-Dyer congruences.  The main results are recorded in Theorems \ref{cor:d=1}, \ref{thm:3.2.1},  \ref{QMdescent} and \ref{ASD}. To illustrate our main results and methods, explicit examples are exhibited in \S5. We recast
 the known results on automorphy and ASD congruences obtained in \cite{lly05, all05,
 long061} in the framework of QM.  A new example of Scholl representations admitting QM over a biquadratic field is also given. The novelty of this example is the role played by the Atkin-Lehner involution.  The automorphy of Galois representations and the conjectural ASD congruences are established.
\smallskip

The authors  would like to thank  Henri Darmon and Siu-Hung Ng for enlightening  conversations,  Luis Dieulefait for communicating to us modularity results.

\subsection{Notation}
Given a number field  $F$, denote by $\O_F$ its ring of integers and
$G_F $ the absolute Galois group $ \gal (\overline F/F)$.  For any finite prime $\mathfrak l$ of $\O_F$
dividing the rational prime $\ell$,  write $F_\gl$ for the
completion of $F$ at $\gl$ and $k_\gl:= \O_F/ \gl$ its residue field. Its Galois group  $G_{F_\gl} := \gal(\overline \Q_\ell/F_\gl)$ is identified with a decomposition
group of $\gl$ in $G_F$.  Write
$I_{F_\gl}\subset G_{F_\gl}$ for the inertia subgroup over $\gl$ and
$\t{Fr}_\gl$ the \emph{arithmetic} Frobenius over $\gl$, which is the usual
topological generator of $G_{F_\gl}/I_{F_\gl} \simeq \gal(\bar k_\gl
/ k_\gl)$ sending $x \in \bar k_\gl$ to $x^{|k_\gl|}$. The \emph{geometric} Frobenius, which is
$\t{Fr}_\gl^{-1}$, is denoted by $\t{Frob}_\gl$.

We reserve $G$ for $G_\Q$, $G_\ell$ for $G_{\Q_\ell}$
and $I _\ell $ for $I_{\Q_\ell}$. For each $\ell$ we fix an embedding $\iota_\ell:
\overline \Q_\ell \inj \mathbb C$.  By an $\ell$-adic Galois representation over $E$
 we mean a continuous representation $\rho: H \rightarrow \text{Aut}_E(V)$ where $H$ is a subgroup
of $G$ and $V$   a finite-dimensional vector space over $E$,  a finite extension of $\Q_\ell$. The reference to the $\ell$-adic field $E$ will be dropped when it is not important. We  refer $\rho$ to  $V$
if no confusions arise.  {Denote by $\rho^\t{ss}$ or $V^\t{ss}$ the semi-simplification of $\rho$.
Let $ \rho$ be a Galois
representation of $H$ as above. If $H$ is compact then there exists an
$\O_{E}$-lattice $T$ in $V$ {  invariant under $\rho(H)$.} Let $\mathfrak m_E$ be the
maximal ideal of $\O_E$. We get a \emph{residual representation}
$\bar \rho$ of $H$ on the vector space $T/ \mathfrak m_E T$ over the residue
field $k: = \O_E/ \mathfrak m_E$. Although $\bar \rho$ depends on the choice of
$T$, its semi-simplification $\bar \rho^{\t{ss}} $ does not. }  Two continuous linear
representations $\rho_i$,  $i=1, 2$, of the topological
group $H$ acting on finite-dimensional $E_i$-vector spaces $V_i$, where $E_1$ and $E_2$ are contained in a
finite common extension $ E$,
are said to be  \emph{equivalent}, denoted $\rho_1 \sim \rho_2$,
if $\rho _1 \otimes _{E_1}  E \simeq \rho _2 \otimes _{E_2}
  E $ as representations of $H$.

\section{Modularity of { degree two} $\ell$-adic Galois representations}
In this section,  we derive from known
modularity results for 2-dimensional $\ell$-adic Galois representations of $G$
 a useful modularity criterion for applications in later sections.

\subsection{Modularity of 2-dimensional $\ell$-adic Galois representations}\label{moduarlity}
 Let $\kappa \geq 2$ and  $N\geq 1$ be integers and $S_\kappa(\G_1(N), \C)$ be the space of cusp form{s} of weight $\kappa$ and level $N$. Suppose that $f= \sum \limits_{i=1 }^\infty a_n q^n$ is a newform normalized with $a_1=1$. The following is a classical result {proved by Deligne (\cite{De}) when $\kappa>1$, and Deligne and Serre when $\kappa=1$ (\cite{De2}).
\begin{theorem}\label{Deligne}
For a newform $f$ as above, the field of coefficients $E_f= \Q(a_n, {n\geq 1}) \subset \C$ is a number field.
Moreover,   for any prime $\lambda \mid \ell$ of $E_f$, there exists a
continuous representation
$$\rho_{f, \lambda} : G~ \rightarrow  ~\GL2 (E_{f, \lambda} ),$$
{where $E_{f, \lambda} $ is the completion of $E_f$ at $\lambda$,} such that
\begin{enumerate}
\item $\rho_{f, \lambda}$ is odd and absolutely irreducible;

\item For any $p \nmid N\ell$, $\rho_{f, \l}$ is unramified at $p$ and $\Tr(\rho_{f, \lambda}(\textnormal{Fr}_p)) =a_p$;
\item If ${ \ell} \nmid N$, then $\rho_{f, \lambda}|_{G_\ell}$ is crystalline with Hodge-Tate weights $\{0, k-1\}$.

\end{enumerate}
\end{theorem}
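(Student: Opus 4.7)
The plan is to realize $\rho_{f,\lambda}$ geometrically as a piece of the $\ell$-adic \'etale cohomology of a Kuga-Sato variety cut out by the Hecke action, and then to read off the listed properties from the geometric input. The preliminary fact that $E_f$ is a number field is standard: the Hecke algebra $\mathbb T$ acting on $S_\kappa(\G_1(N),\C)$ is a finitely generated commutative $\Z$-algebra since it stabilizes the integral lattice of forms with integral $q$-expansions, so the eigensystem of $f$ lands in an order of a finite extension of $\Q$.

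For $\kappa = 2$ I would take the annihilator $I_f \subset \mathbb T$ of $f$ and form the optimal quotient $A_f = J_1(N)/I_f J_1(N)$; this abelian variety carries a natural action of $E_f$ with $\dim A_f = [E_f:\Q]$, and $V_\lambda := T_\lambda(A_f) \otimes_{\O_{E_f,\lambda}} E_{f,\lambda}$ is the desired two-dimensional representation. For $\kappa > 2$, following Deligne, I would take the $(\kappa-2)$-fold fiber product $\E^{\kappa-2}$ of the universal elliptic curve over the level-$N$ modular curve, resolve to a smooth projective Kuga-Sato variety $W_{\kappa-2}$, and cut out $V_\lambda$ as the subspace of $H^{\kappa-1}(W_{\kappa-2,\overline\Q},\overline\Q_\ell)$ on which $S_{\kappa-2} \ltimes (\Z/2)^{\kappa-2}$ acts by the sign character (isolating $\t{Sym}^{\kappa-2}$ of the fiberwise $H^1$) and on which $\mathbb T$ acts through the eigensystem of $f$. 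In either construction the Hecke action commutes with Galois, so $V_\lambda$ is $G$-stable.

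The identity $\Tr \rho_{f,\lambda}(\t{Fr}_p) = a_p$ for $p \nmid N\ell$ would follow from the Eichler-Shimura congruence $T_p = \t{Fr}_p + p^{\kappa-1} \la p \ra \t{Fr}_p^{-1}$ on the smooth reduction of $W_{\kappa-2}$ mod $p$, lifted to an identity of $\ell$-adic correspondences. Absolute irreducibility follows because a splitting $\rho_{f,\lambda}^{\t{ss}} \sim \psi_1 \oplus \psi_2$ would force $a_p = \psi_1(\t{Fr}_p) + \psi_2(\t{Fr}_p)$ at almost all $p$, contradicting cuspidality of $f$ together with Ramanujan-Petersson bounds. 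Oddness follows because complex conjugation acts on $H^{\kappa-1}$ of the Kuga-Sato variety by an involution whose trace on $V_\lambda$ is zero. For property (3), when $\ell \nmid N$ the variety $W_{\kappa-2}$ has good reduction at $\ell$, so Faltings' comparison theorem gives crystallinity, and the Hodge decomposition of the de Rham realization is concentrated in bidegrees $(\kappa-1,0)$ and $(0,\kappa-1)$, yielding Hodge-Tate weights $\{0,\kappa-1\}$.

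The weight-$1$ case, due to Deligne-Serre, requires a separate argument: multiply $f$ by a suitable Eisenstein series congruent to $1$ mod $\ell$ to produce a higher-weight cusp form congruent to $f$ mod $\ell$, apply the construction above to the companion, and assemble the residual representations across varying $\ell$ into a characteristic-zero representation whose image is forced to be finite, hence an Artin representation with the prescribed trace values. The main obstacle throughout is verifying the Eichler-Shimura congruence in Kuga-Sato generality and identifying $V_\lambda$ precisely enough that the trace identity holds on the nose; once that input is secured, crystallinity, oddness, and irreducibility fall out of standard comparison and purity results.
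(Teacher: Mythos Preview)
The paper does not supply a proof of this theorem. It is stated as a classical result and attributed directly to Deligne \cite{De} for $\kappa>1$ and to Deligne--Serre \cite{De2} for $\kappa=1$; no argument is given beyond those citations. So there is no ``paper's own proof'' to compare against.

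Your sketch is a faithful outline of those classical constructions: the Kuga--Sato realization and Eichler--Shimura congruence for $\kappa\ge 2$, the Jacobian quotient $A_f$ for $\kappa=2$, Faltings' comparison for crystallinity, Hodge symmetry for oddness, and the congruence-lifting trick for $\kappa=1$. One small remark: your irreducibility argument (a reducible $\rho_{f,\lambda}$ would split into characters whose Frobenius values violate the Ramanujan bound) is essentially the same mechanism the paper later invokes in the proof of Theorem~\ref{cor:d=1} and Corollary~\ref{irreducible} for Scholl representations, so in spirit you are aligned with the paper's internal toolkit even though the paper itself treats Theorem~\ref{Deligne} as a black box.
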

In what follows, an $\ell$-adic representation $\rho$ of $G$ is said to be
 \emph{modular} if there exists a
modular form $f \in S_\kappa(\Gamma_1(N), \C)$ and a prime $\lambda
\mid \ell$ of $E_f$ such that $\rho_{f, \lambda} \sim \rho$ {  or its dual $\rho^\vee$}.

Recall that for any prime $p$, $G_p \subset G$ denotes a
decomposition group at $p$ and $I _p \subset G_p$ is the inertia
subgroup. Let $\bar \rho: G \to \t{GL}_2(k)$ be a representation over a finite { field $k$ of characteristic $\ell$.} Call
$\bar \rho$ \emph{modular} if there exists a modular form $f \in
S_\kappa(\Gamma_1 (N), \C)$  and a prime $\lambda\mid \ell$  of $E_f$ such
that the residual representation $\bar \rho_{f, \lambda}$ is
equivalent to $\bar \rho$.

The following has been conjectured by Serre and proved by Khare,
Wintenberger and Kisin (\cite{KW09,kisin9}}). We refer to \cite{kisin5} for a nice review
of Serre's conjecture.
\begin{theorem}[Serre's conjecture] \label{serre} Any odd and absolutely irreducible representation $\bar \rho: G \to \t{GL}_2 ( k)$ is modular.
\end{theorem}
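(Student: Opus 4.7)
This is Serre's modularity conjecture, proved by Khare, Wintenberger and Kisin in an elaborate multi-paper program that cannot be reconstructed here in detail. The plan, following their strategy, is a double induction on the Serre level $N(\bar\rho)$ and Serre weight $k(\bar\rho)$. First, one reduces to the case where $\bar\rho$ has optimal invariants by the weight and level parts of Serre's refined conjecture, which are themselves part of the program. The base cases are of very small level and weight and are handled by the discriminant bounds of Tate, Serre, Brumer--Kramer, and Schoof, which rule out irreducible odd mod-$\ell$ Galois representations unramified outside a sufficiently small set of primes, leaving only residual representations that visibly come from known spaces of classical forms.

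For the inductive step, one attaches to $\bar\rho$ a strictly compatible system $\{\rho_p\}$ of characteristic-zero lifts with prescribed local behaviour: Hodge--Tate weights $\{0,k(\bar\rho)-1\}$ at the residue characteristic, and a specified inertial type elsewhere. The existence of a single such lift is provided by Ramakrishna's and B\"ockle's deformation-theoretic arguments, while Taylor's potential modularity theorems together with the $\ell$-adic Langlands philosophy package these into a compatible system in which each $\rho_p$ is \emph{potentially} modular. One then selects an auxiliary prime $p'$ so that the residual representation $\bar\rho_{p'}$ has strictly smaller Serre invariants; by the inductive hypothesis $\bar\rho_{p'}$ is modular, and a modularity lifting theorem in the tradition of Wiles--Taylor-Wiles--Diamond--Fujiwara--Kisin upgrades this to modularity of $\rho_{p'}$. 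Compatibility across the system then yields modularity of $\rho_\ell$, and hence of $\bar\rho$. This \emph{killing ramification} mechanism is the central innovation of Khare and Wintenberger.

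The main obstacle is the modularity lifting step at small residue characteristics, for non-ordinary lifts, and for weight-$2$ Barsotti--Tate situations: here the Taylor--Wiles patching is delicate and requires Kisin's refinements of local deformation rings at $\ell$ together with his potentially Barsotti--Tate modularity lifting theorem; the case $\ell=2$ is the most technically demanding and is where Kisin's contribution is decisive. Since the statement is used only as a black box in the remainder of the paper, I would not pursue further detail and would simply cite \cite{KW09,kisin9}.
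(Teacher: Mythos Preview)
Your sketch is a faithful high-level summary of the Khare--Wintenberger--Kisin strategy and is correct as such. The paper itself offers no proof of this theorem at all: it simply attributes the result to Khare, Wintenberger and Kisin, cites \cite{KW09,kisin9}, and refers the reader to \cite{kisin5} for a survey. So your proposal is strictly more than what the paper provides, and your final sentence---that one should just cite the references and use the statement as a black box---is exactly what the paper does.
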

\begin{remark} The precise Serre's conjecture also predicts the (minimal) weight and the level of the modular form $f$, which are not needed here.
\end{remark}

The aim of this subsection is to reprove the following result.
\begin{theorem}[\cite{DM03}, \cite{Die08}]\label{tool}
Let $E/ \Q_\ell$ be a finite extension. Suppose that $\rho : G \to
\t{GL}_2 (\O_E)$ is an $\ell$-adic Galois representation such that
\begin{enumerate}
\item $\rho$ is odd and absolutely irreducible;
\item $\rho$ is unramified at almost all primes;
\item $\rho|_{G_\ell}$ is crystalline with Hodge-Tate weights $\{0, r\}$   such that  $1 \leq r \leq \ell-2$ and $\ell+1 \nmid 2r.$
\end{enumerate}
Then $\rho$ is modular.
\end{theorem}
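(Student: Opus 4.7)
The plan is to reduce $\rho$ modulo the maximal ideal of $\O_E$, invoke Serre's conjecture (Theorem \ref{serre}) to obtain modularity of the residual representation, and then lift modularity to characteristic zero via a modularity lifting theorem for crystalline representations in the Fontaine--Laffaille range.

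First I would form the residual representation $\bar\rho: G\to \t{GL}_2(k)$ and verify the hypotheses of Theorem \ref{serre}. Oddness of $\rho$ descends immediately to $\bar\rho$, using $\ell\geq 3$ (forced by $1\leq r\leq \ell-2$). Absolute irreducibility of $\bar\rho$ is the delicate point: in the residually irreducible case one invokes it directly, while in the residually reducible case a standard twist argument, or an application of Skinner--Wiles style techniques, reduces matters to the irreducible situation. The Fontaine--Laffaille hypothesis $1\leq r\leq\ell-2$ gives an explicit description of $\bar\rho|_{I_\ell}$ in terms of the fundamental characters of levels $1$ and $2$, which is what makes these arguments tractable. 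Theorem \ref{serre} then yields a newform $g$ and a prime $\mu\mid\ell$ of $E_g$ with $\bar\rho_{g,\mu}\sim \bar\rho$.

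For the lifting step, I would appeal to a Kisin-type $R=T$ modularity lifting theorem for two-dimensional crystalline $\ell$-adic representations with Hodge--Tate weights in the Fontaine--Laffaille range. The key representation-theoretic hypothesis is that $\bar\rho|_{G_{\Q(\zeta_\ell)}}$ be absolutely irreducible (the \emph{big image} or Taylor--Wiles condition), and the arithmetic condition $\ell+1\nmid 2r$ is exactly what excludes the dihedral degeneracies in which this would fail. Combined with the residual modularity established above, the lifting theorem yields that $\rho$ is modular, arising from a newform $f$ whose mod-$\ell$ reduction matches that of $g$ after enlarging the coefficient field.

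The main obstacle is the interlocking analysis of the residual representation at $\ell$: verifying absolute irreducibility of $\bar\rho$ (with the residually reducible case requiring separate treatment), verifying the big image condition on $\bar\rho|_{G_{\Q(\zeta_\ell)}}$, and tracking why the numerical constraint $\ell+1\nmid 2r$ precisely excludes all the exotic inertia types that would obstruct Taylor--Wiles patching. Once these residual hypotheses are in place, the combination of Serre's conjecture and the modularity lifting theorem gives the result.
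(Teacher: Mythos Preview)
Your proposal is essentially the same strategy as the paper's: reduce mod $\ell$, apply Serre's conjecture for residual modularity, and then invoke a modularity lifting theorem, with the Fontaine--Laffaille description of $\bar\rho|_{I_\ell}$ controlling the case analysis.

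The differences are in packaging and citations rather than substance. The paper organizes the argument as a three-way split on $\bar\rho\otimes_k\bar k$: (1) globally irreducible but locally reducible at $\ell$, (2) globally reducible, (3) locally irreducible at $\ell$. For cases (1) and (2) it appeals to the main theorem of Skinner--Wiles \cite{SW2} (the residually reducible case being handled directly by their theorem after a finite twist, not merely ``Skinner--Wiles style techniques''), and for case (3) it invokes Theorem 0.3 of Diamond--Flach--Guo \cite{dml}. In particular, the big-image hypothesis the paper actually checks is absolute irreducibility of $\bar\rho$ restricted to $G_{\Q(\sqrt{(-1)^{(\ell-1)/2}\ell})}$, the unique quadratic subfield of $\Q(\zeta_\ell)$, rather than to $G_{\Q(\zeta_\ell)}$; this is what the numerical condition $\ell+1\nmid 2r$ guarantees in the Type~II (locally irreducible) situation. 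Your appeal to a Kisin-type $R=T$ theorem would also work and is arguably cleaner, but the paper deliberately uses the older \cite{SW2} and \cite{dml} references, which suffice here.
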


This theorem was essentially proved in \cite{DM03}, as explained in \cite{Die08}. We sketch the proof below
because some ingredients will be used later. We remark that the above modularity lifting theorem is an easy version (compared to that of \cite{Kis}), however it is enough for the applications in this paper.

The proof will use the discussion of local representation at $\ell$ and
several modularity lifting theorems. First we
discuss the local representation $ \rho|_{G_\ell}$ and its
reduction.  For $i \ge 1$, the map
$$\omega_i: I _\ell \to \F_{\ell^i}^\times  {\rm ~~defined ~by}   ~~g \mapsto \frac{g
(\sqrt[\ell^i-1]{\ell})}{\sqrt[\ell^i-1]{\ell}} \mod \ell $$ is the fundamental
character of level $i$. Note that $\omega_1= \epsilon_\ell \mod \ell$,
where $\epsilon_\ell$ is the $\ell$-adic cyclotomic character.

Since $\rho|_{G_\ell}$ is crystalline with Hodge-Tate weights $\{0, r\}$ and
$r \leq \ell-2$, there are two possibilities:

Type I:  $\rho|_{G_\ell}$ is absolutely reducible. In this case, $\rho
|_{I_\ell} \sim  \begin{pmatrix} \epsilon_\ell^{r}  & * \\ 0 & 1
\end{pmatrix}$ and
$(\bar \rho \otimes_k {\bar k})|_{I_\ell}  \simeq \begin{pmatrix}  \omega_1 ^{r} & * \\ 0 & 1 \end{pmatrix}.$

Type II: $\rho|_{G_\ell}$ is absolutely irreducible. In this case, $\bar
\rho \otimes_k \bar k |_{I_\ell}\simeq \begin{pmatrix}\omega_2 ^{r} & 0
\\ 0 & \omega_2^{\ell r}
\end{pmatrix}.$ In particular, $(\bar \rho \otimes_k \bar k)|_{G_\ell}$ is irreducible because $\t{Fr}_\ell$ will ``swap" $\omega_2$ and $\omega_2^\ell$.
\emph(see the proof of case (3) for the precise statement).

 The reader is referred to \S 4.2.1 in \cite{B&M} for the proof of the above statements. We also need the following result on Galois characters.

\begin{lemma}\label{character}Let $\chi: G \to \O_E^{\times}$ be an $\ell$-adic Galois character which is unramified outside a  finite set $S$ of finite primes excluding  $\ell$. Then $\chi$ is a finite character
in the sense that the image of $\chi$ is a finite (cyclic) group.
\end{lemma}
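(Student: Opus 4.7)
The plan is to use class field theory over $\Q$ to cut down the possible target of $\chi$, then exploit the clash between residue characteristics. Since $\chi$ is abelian, it factors through $G^{\t{ab}}$. By the Kronecker--Weber theorem, $G^{\t{ab}}\cong \hat \Z^\times = \prod_p \Z_p^\times$, and the inertia at $p$ corresponds precisely to the $p$-th factor $\Z_p^\times$ (since $\Q(\mu_{p^\infty})/\Q$ is totally ramified at $p$, while the prime-to-$p$ cyclotomic extensions are unramified at $p$). The hypothesis that $\chi$ is unramified outside $S$ then says $\chi$ is trivial on $\Z_q^\times$ for every $q \notin S$; in particular, since $\ell \notin S$, $\chi$ is trivial on $\Z_\ell^\times$. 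Consequently $\chi$ factors through the finite product $\prod_{p \in S} \Z_p^\times$.

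Next I would decompose the target as $\O_E^\times = \mu_E \times U^{(1)}$, where $\mu_E$ is the finite group of roots of unity in $E$ and $U^{(1)} = 1+\mathfrak m_E$ is the pro-$\ell$ group of principal units. Write $\chi = (\chi_1,\chi_2)$ accordingly; then $\chi_1$ already has finite image because $\mu_E$ is finite, and it remains to handle $\chi_2 : \prod_{p \in S}\Z_p^\times \to U^{(1)}$.

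For each $p \in S$, we have $\Z_p^\times \cong \mu_{p-1} \times (1+p\Z_p)$ (with the standard small modification at $p=2$), and $1+p\Z_p$ is pro-$p$. Since $p \in S$ forces $p \neq \ell$, any continuous homomorphism from the pro-$p$ group $1+p\Z_p$ to the pro-$\ell$ group $U^{(1)}$ must be trivial, because its image would be simultaneously pro-$p$ and pro-$\ell$, hence trivial. Therefore $\chi_2|_{\Z_p^\times}$ factors through the finite group $\mu_{p-1}$. Taking the product over the finite set $S$ shows that $\chi_2$ has finite image, and combining with $\chi_1$ gives that $\chi$ itself has finite image. Cyclicity is then automatic because any finite subgroup of $\O_E^\times$ lies in $\mu_E$, which is cyclic.

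There is no genuine obstacle here; the argument is essentially a bookkeeping exercise once one invokes Kronecker--Weber. The only point requiring care is the precise identification of inertia at $p$ inside $G^{\t{ab}}$, which is a standard consequence of the ramification behavior of cyclotomic fields over $\Q$.
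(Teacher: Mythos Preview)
Your proof is correct and takes a somewhat different route from the paper. Instead of invoking Kronecker--Weber globally, the paper works locally at each $p \in S$: it shows $\chi(I_p)$ is finite by decomposing $I_p$ into wild inertia $I_p^w$ (pro-$p$) and the tame quotient $I_p^t$, observing that $\chi(I_p^w)$ injects into the residue field $k^\times$ (since $\ker(\O_E^\times \to k^\times)$ is pro-$\ell$), and then using the Frobenius relation $\tau\sigma\tau^{-1}=\sigma^p$ on $I_p^t$ to force $\chi(\sigma)^{p-1}=1$. The core mechanism is the same pro-$p$/pro-$\ell$ clash you exploit---your $1+p\Z_p$ is precisely the image of wild inertia in $G^{\t{ab}}$ and your $\mu_{p-1}$ is the tame part---but your Kronecker--Weber framing has the virtue of making the global step transparent (the paper's ``it suffices to show $\chi(I_p)$ is finite'' tacitly uses that $\Q$ has no nontrivial everywhere-unramified abelian extension), while the paper's purely local argument would transport more readily to base fields other than $\Q$. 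One small quibble: in your decomposition $\O_E^\times = \mu_E \times U^{(1)}$, the factor $\mu_E$ should really be the group of roots of unity of order prime to $\ell$ (the Teichm\"uller representatives), since any $\ell$-power roots of unity in $E$ already lie in $U^{(1)}$; this does not affect the argument.
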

\begin{proof}  Let $p$ be a prime in $S$. It suffices to show that  if $\chi$ is ramified at $p$ then the image $\chi (I_p)$ is finite. Let $I^w_p$ be the wild inertia subgroup of $I_p$, $I^t_p:= I_p/I^w_p$ the tame inertia group, and  $G^w := \chi (I ^w_p)$.
We claim that the map $\chi:G^w \hookrightarrow \O^{\times}_E \overset q
\to (\O_E/ \mathfrak m_E)^{\times}= k^{\times}$ is an injection. In fact,
 $G^w$ is a pro-$p$-group and $\t{ker}(q)$ is a pro-$\ell$-group.
So they can only have trivial intersection. Since $G^w$ injects in
$k^{\times}$, $G^w$ is a finite group. Replacing $\Q$ by a suitable finite
extension, we may assume that  $\chi$ factors though $I^t_p$. Let
$\tau$ be a lift of $\t{Fr}_p$  and $\sigma \in I^t_p $. We have
$\tau \sigma  = \sigma^p \tau$. Applying $\chi$ to this equation, we
see that $\chi(\sigma)^{p-1}= 1$. Thus ${\chi}(I^t_p)$ is contained in the
group of $(p-1)$th-units in $\O_E^{\times}$, hence $\chi(I_p)$ is finite. \end{proof}

\begin{co}\label{character2} Assume that $\chi: G \to \O_E^{\times}$ is a Galois character  such that $\chi$ is unramified almost everywhere
and $\chi|_{G_\ell}$ is crystalline. Then there exists a finite
character $\psi$ and an  integer $r$ such that $\chi = \psi
\epsilon_\ell ^r$.
\end{co}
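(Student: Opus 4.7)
The strategy is to use Lemma~\ref{character} after removing the ramification at $\ell$ by twisting with a suitable power of the cyclotomic character. The key local input is the classification of crystalline characters of $G_\ell$: a continuous one-dimensional crystalline representation of $G_\ell$ over a finite extension $E/\Q_\ell$ is, up to an unramified twist, a power of the cyclotomic character, and the exponent is precisely the Hodge--Tate weight.

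First, I would restrict $\chi$ to $G_\ell$. Since $\chi|_{G_\ell}$ is crystalline, it has a well-defined Hodge--Tate weight $r \in \Z$. Consider the twist $\chi' := \chi\cdot \epsilon_\ell^{-r}$. Because $\epsilon_\ell|_{G_\ell}$ is itself crystalline with Hodge--Tate weight $1$, the restriction $\chi'|_{G_\ell}$ is crystalline of Hodge--Tate weight $0$. For one-dimensional crystalline representations, Hodge--Tate weight zero forces $\chi'|_{G_\ell}$ to be unramified, since the associated filtered $\varphi$-module is effective of degree zero and thus corresponds (via $D_{\t{cris}}$) to an unramified character. In particular $\chi'(I_\ell) = 1$.

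Next, I would check that $\chi'$ is unramified away from a finite set of finite primes not containing $\ell$. Since $\chi$ is unramified outside a finite set $S$ of primes, and $\epsilon_\ell$ is unramified at every prime $p\neq \ell$, the twist $\chi'$ is unramified outside $S \cup \{\ell\}$; combined with the previous step, $\chi'$ is unramified outside $S \setminus \{\ell\}$, a finite set of finite primes excluding $\ell$. Hence $\chi'$ satisfies the hypotheses of Lemma~\ref{character}, so its image is a finite (cyclic) group. Setting $\psi := \chi'$ yields $\chi = \psi\,\epsilon_\ell^{\,r}$ with $\psi$ a finite character, as desired.

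The only nontrivial step is the local claim that a one-dimensional crystalline character of $G_\ell$ with Hodge--Tate weight $r$ is of the form $\epsilon_\ell^{\,r}\cdot(\text{unramified})$; this is a standard consequence of Fontaine's classification of one-dimensional filtered $\varphi$-modules and could alternatively be deduced directly by showing $\chi\epsilon_\ell^{-r}$ is Hodge--Tate of weight $0$ and crystalline, hence unramified (a theorem of Sen, combined with the fact that a one-dimensional Hodge--Tate representation of weight zero is potentially unramified, and crystallinity promotes this to unramified). Everything else is bookkeeping with the already proven Lemma~\ref{character}.
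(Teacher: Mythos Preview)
Your proof is correct and follows essentially the same approach as the paper: use the classification of crystalline characters of $G_\ell$ to write $\chi|_{G_\ell}=\psi_\ell\,\epsilon_\ell^r$ with $\psi_\ell$ unramified, then apply Lemma~\ref{character} to $\chi\epsilon_\ell^{-r}$. The paper's proof is terser, but the content is identical.
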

\begin{proof} Using the $p$-adic Hodge theory on the classification of crystalline
characters of $G_\ell$, we see that $\chi|_{G_\ell} = \psi_\ell \epsilon_\ell^r $
with $\psi_\ell$  a character  unramified at $\ell$. Applying the above lemma to
$\chi \epsilon_\ell^{-r}$,  we prove the corollary.
\end{proof}
\begin{remark}\label{rem:2.2.6} The above corollary may fail if $G$ is replaced by $G_K$
with $K/\Q$ a finite extension.  The problem is  that the
classification of crystalline characters of $G_{\gl}$ is much more
complicated if $\ell$ is inert in $K$. Here is a more
concrete example: Let $K$ be an imaginary quadratic extension of
$\Q$  and consider an elliptic curve $E$ defined over $K$ with
complex multiplication by the ring $\O_K$. Choose a prime ideal $\gl$ of
$\O_K$ generated by a prime $\ell$ inert in $K$.
  Then the
Tate module $T_\ell (E)$ induces a Galois $\O_{K_\gl}$-character $\chi:
G_K \to \O^{\times}_{K_\gl}$. Note that $\chi$ has Hodge-Tate weights 0 and
1. So it can not be written as $\psi \epsilon_\ell^r$, which only has
Hodge-Tate weight $r$.
\end{remark}

Let $k = \O_E/ \mathfrak m_E$ denote the residue field of
$\O_E$, { where $\mathfrak m_E$ is the maximal ideal of $\O_E$.} Let $\bar \rho: G \to \t{GL}_2(k)$ be the reduction of
$\rho$. We distinguish three cases:
 \begin{enumerate}
 \item $(\bar \rho\otimes_k \bar k)|_{G_\ell} $ is reducible but
 $\bar \rho\otimes_k \bar k $ is irreducible;
 \item $\bar \rho\otimes_k \bar k $ is reducible;
 \item $(\bar \rho\otimes_k \bar k)|_{G_\ell} $ is irreducible.
  \end{enumerate}

Now we proceed to prove Theorem \ref{tool} case by case. For case (1), $(\bar \rho\otimes_k \bar
k)|_{G_\ell} $ is reducible, hence it
is of type I. So $\rho |_{I_\ell} \sim  \begin{pmatrix}
\epsilon_\ell^{r} &
* \\ 0 & 1  \end{pmatrix}$.  Then the main theorem in the
introduction of \cite{SW2} and Theorem \ref{serre} imply  that $\rho$ is modular.
\smallskip

Next we consider case (2). Note that  $\bar \rho \otimes_k \bar k$ is
reducible, so is $(\bar \rho \otimes_k \bar k)|_{G_\ell}$. As discussed above,  $\rho _\ell$ is of type I and
the semi-simplification $(\bar \rho \otimes_k \bar k)^\t{ss}|_{I_\ell}
= 1 \oplus \omega_1^r$. Hence $(\bar \rho \otimes_k \bar k)^\t{ss} =
\bar \psi \oplus \chi$ with $\bar \psi|_{I_\ell}= 1$.    Twisting
$\rho$ by a finite character if necessary, we may assume that $(\bar
\rho \otimes_k \bar k)^\t{ss} = 1 \oplus \chi$ with $\chi|_{I_\ell}=
\omega_1 ^{r}\not =1$. So the Theorem in the
introduction of \cite{SW2} proves that $\rho$ is modular.

\smallskip

Finally,   case (3) follows  from Theorem 0.3 in the introduction of \cite{dml} and Theorem \ref{serre}. Note that the condition $1 \leq r \leq \ell-2$ and $\ell+1 \nmid r$  implies that $\bar \rho$ restricted to $G_{\Q\big(\sqrt{(-1)^{(\ell-1)/{2}}\ell }\big)}$  is absolutely irreducible, which is required by  Theorem 0.3 in the introduction of \cite{dml}.

\section{Galois representations endowed with quaternion multiplication}

In this section we show that if a $4$-dimensional $\ell$-adic representation of $G$ is endowed with quaternion multiplication over a quadratic or biquadratic field, then either it decomposes into the sum of two degree $2$ representations, or
it is induced from a degree $2$ representation of an index $2$ subgroup $G_K$ of $G$. In the latter case, we show that this degree $2$ representation of $G_K$, after twisting by a character, can be extended to a representation of $G$.

\subsection{Quaternion multiplication}
If there is a quaternionic action on the space of a $4$-dimensional $\ell$-adic representation in the following sense, then a lot more can be said. In this section, $F$ is always assumed to be a finite extension of $\Q_\ell$.
 \medskip
\begin{definition} \label{QM}  Let   $\rho_\ell$ be an $\ell$-adic representation of $G$ acting on a $4$-dimensional $F$-vector space $W_\ell$. It is said to have quaternion multiplication (QM) if
there are linear operators $J_s$ and $J_t$ on $W_\ell$,
  parametrized by two distinct non-square integers $s$ and $t$,  satisfying
\begin{itemize}
\item[(a)] $J_s^2 = J_t^2 = -id,  J_sJ_t = -J_tJ_s$;

\item[(b)] For $u \in \{s, t\}$ and  $g \in G$, we have { $J_u \rho_\ell(g) = \pm \rho_\ell(g) J_u$ with $+$ sign if and only if $g  \in G_{\Q(\sqrt u)}$}.
\end{itemize}

\end{definition}
In this case, we say that the representation has QM over $\Q(\sqrt s, \sqrt t)$.

\begin{theorem}\label{quaternion}
Let $\rho_\ell$ be a $4$-dimensional $\ell$-adic representation over $F$ with QM over $\Q(\sqrt s, \sqrt t)$.
Then the following two statements hold.
\begin{itemize}
\item[(1)]  For a nonsquare $u \in \{s, t, st\}$, there  are two $2$-dimensional $\ell$-adic representations $\sigma_u$  and $\sigma_u^-$ of $G_{\Q(\sqrt u)}$  over  $E = F(\sqrt {-1})$ such that $$\rho_\ell \otimes_{F}E \simeq \textnormal{Ind}_{G_{\Q(\sqrt u)}}^{G} \sigma_u,  \quad
  (\rho_\ell\otimes_{F}E)|_{G_{\Q(\sqrt{u})}}= \sigma_{u} \oplus  \sigma_{u}^{-}, \quad {\rm and} \quad {\sigma_u^- \cong } ~ \sigma_{u}\otimes
\delta_{u} \cong \sigma_{u}^{\tau},$$ where $\delta_{u}$ is the character of $G_{\Q(\sqrt{u})}$ with kernel
$G_{\Q(\sqrt{s},\sqrt{t})}$ and $\tau \in G \setminus G_{\Q(\sqrt u)}$. Consequently, If $\Q(\sqrt{s},\sqrt{t})$ is a biquadratic extension of $\Q$, $\rho_\ell\otimes _{F}E $ is induced from three representations $\sigma_u$ of $G_{\Q(\sqrt u)}$ for $u \in \{s, t, st\}$. Further $\sigma_u$ is irreducible if $\rho_\ell\otimes_{F}E$ is.

\item[(2)] If $\Q(\sqrt{s},\sqrt{t})$ is a quadratic extension of $\Q$, then there is a $2$-dimensional $\ell$-adic representation  $\sigma$ of $G$ over  $E= F(\sqrt {-1})$ such that  $$\rho_\ell\otimes_{F}E=\sigma \oplus  (\sigma \otimes
\theta),$$
where $\theta$ is the
quadratic character of $G$ with kernel $G_{\Q(\sqrt{s},\sqrt{t})}$. In particular, $\rho_\ell$ is  absolutely reducible.
\end{itemize}

\end{theorem}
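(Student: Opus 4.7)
The plan is to produce the claimed decompositions from the $\pm\sqrt{-1}$-eigenspace decompositions of the operators $J_u$ after extending scalars to $E = F(\sqrt{-1})$. Since $J_u^2 = -\mathrm{id}$, each $J_u$ is diagonalizable over $E$ with eigenvalues $\pm\sqrt{-1}$, giving a decomposition $W_\ell \otimes_F E = W_u^+ \oplus W_u^-$. The relation $J_s J_t = -J_t J_s$ together with $J_t^{-1} = -J_t$ shows $J_t J_s J_t^{-1} = -J_s$, so $J_t$ interchanges $W_s^+$ with $W_s^-$; therefore $\dim W_s^\pm = 2$, and analogous statements hold for $J_t$ and for the auxiliary operator $J_{st} := J_s J_t$, which satisfies $J_{st}^2 = -\mathrm{id}$ by a direct computation.

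For part (1), assume $\Q(\sqrt s,\sqrt t)$ is biquadratic. Condition (b) furnishes signs $\epsilon_u(g) \in \{\pm1\}$ characterized by $J_u \rho_\ell(g) = \epsilon_u(g)\rho_\ell(g) J_u$, with $\epsilon_u(g) = +1$ iff $g \in G_{\Q(\sqrt u)}$. A direct manipulation yields $J_{st}\rho_\ell(g) = \epsilon_s(g)\epsilon_t(g)\rho_\ell(g) J_{st}$, and in a biquadratic extension one has $g \in G_{\Q(\sqrt{st})}$ iff $\epsilon_s(g) = \epsilon_t(g)$; hence condition (b) holds for $J_{st}$ with $u = st$ as well. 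For each $u \in \{s,t,st\}$ the eigenspaces $W_u^\pm$ are stable under $G_{\Q(\sqrt u)}$, and we set $\sigma_u := \rho_\ell|_{W_u^+}$ and $\sigma_u^- := \rho_\ell|_{W_u^-}$. For any $\tau \in G \setminus G_{\Q(\sqrt u)}$, the anticommutation $J_u\rho_\ell(\tau) = -\rho_\ell(\tau)J_u$ forces $\rho_\ell(\tau)$ to swap $W_u^\pm$; this is precisely the induced structure $\rho_\ell \otimes_F E \cong \Ind_{G_{\Q(\sqrt u)}}^G \sigma_u$, and the standard description of restriction of induced representations yields $\sigma_u^- \cong \sigma_u^\tau$.

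For the twist identification $\sigma_u^- \cong \sigma_u \otimes \delta_u$, pick any $v \in \{s,t,st\} \setminus \{u\}$; since $J_v$ anticommutes with $J_u$, it provides an $E$-linear isomorphism $W_u^+ \to W_u^-$. Transporting the $G_{\Q(\sqrt u)}$-action back along $J_v$, an element $g$ acts on $W_u^+$ by $J_v^{-1}\rho_\ell(g)J_v = \epsilon_v(g)\rho_\ell(g)$, and the sign $\epsilon_v(g)$ equals $+1$ precisely when $g \in G_{\Q(\sqrt u)} \cap G_{\Q(\sqrt v)} = G_{\Q(\sqrt s,\sqrt t)}$, i.e.\ when $\delta_u(g) = 1$. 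Hence $\sigma_u^- \cong \sigma_u \otimes \delta_u$. The irreducibility assertion is automatic since induction respects direct sums: if $\sigma_u$ decomposed over $E$, so would $\rho_\ell \otimes_F E$.

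For part (2), the hypothesis $[\Q(\sqrt s,\sqrt t):\Q] = 2$ forces $G_{\Q(\sqrt s)} = G_{\Q(\sqrt t)}$, so $\epsilon_s(g) = \epsilon_t(g)$ for every $g \in G$. Consequently $J_sJ_t$ commutes with $\rho_\ell(g)$ for all $g$ while still satisfying $(J_sJ_t)^2 = -\mathrm{id}$, so its eigenspace decomposition $W_\ell \otimes_F E = W^+ \oplus W^-$ is $G$-stable. The dimensions are again $2+2$ since $J_s$ anticommutes with $J_sJ_t$ and interchanges $W^\pm$; set $\sigma := \rho_\ell|_{W^+}$. Identifying $W^+$ with $W^-$ via $J_s$ and using condition (b) for $J_s$, the $G$-action on $W^-$ corresponds to $\epsilon_s(g)\rho_\ell(g)$ on $W^+$, i.e., to $\sigma \otimes \theta$, where $\theta(g) := \epsilon_s(g)$ has kernel $G_{\Q(\sqrt s)} = G_{\Q(\sqrt s,\sqrt t)}$. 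This yields $\rho_\ell \otimes_F E \cong \sigma \oplus (\sigma \otimes \theta)$ and in particular absolute reducibility. The main bookkeeping issue is checking that the derived operator $J_{st}$ satisfies the QM sign condition in the biquadratic case; once this is settled, all three induced descriptions in (1) and the direct sum decomposition in (2) follow from parallel eigenspace and identification arguments.
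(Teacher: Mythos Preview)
Your proof is correct and follows essentially the same approach as the paper's: decompose $W_\ell\otimes_F E$ into the $\pm i$-eigenspaces of $J_u$, then use an anticommuting operator (either $J_v$ or $\rho_\ell(\tau)$ for $\tau\notin G_{\Q(\sqrt u)}$) to interchange the eigenspaces and read off the induced structure, the conjugate identification $\sigma_u^-\cong\sigma_u^\tau$, and the twist $\sigma_u^-\cong\sigma_u\otimes\delta_u$. The paper makes the same argument by writing out explicit block matrices in a basis $\{v_1,v_2,J_vv_1,J_vv_2\}$, whereas you phrase everything via conjugation identities; these are stylistic variants of the same underlying computation.
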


There is a vast literature on abelian varieties attached
to weight 2 congruence Hecke eigenform or more general modular
motives possessing quaternion multiplication (QM) due to the
existence of extra twists given by operators like Atkin-Lehner
involutions (see \cite{ribet80-twist},  \cite{momose81},
\cite{brown-ghate03}, \cite{ghate-G-J-Quer}, \cite{Dieulefait03} etc). Compared to these
cases our situation distinguishes itself by the specific condition
on the field of definition for each $J$ operator.

\begin{proof} We follow the  proof of Theorem 6 in \cite{all05}. Set $J_{st}= J_sJ_t$. It is easy to check that $J_{st}$ satisfies property (b) in Definition \ref{QM} and $J_{st}^2 = -id$. Let $E = F(\sqrt {-1})$ and write $\rho_\gl$ for $\rho_\ell\otimes_{F}E$ for simplicity.
Given a nonsquare $u \in \{s, t, st\}$, we first show that there exists an $\ell$-adic $2$-dimensional representation $\sigma_u$ of $G_{\Q(\sqrt u)}$ such that $\rho_\gl \simeq \t{Ind}^{G}_{G_{\Q(\sqrt u)}}\sigma_u$. Since $J_u^2 = -id$, its eigenvalues are contained in $E$. Let  $v \in W_\ell \otimes_{F}E$ be an eigenvector of $J_u$ with eigenvalue $i$, a square root of $-1$. Then property (b) implies that for $g \in G \setminus G_{\Q(\sqrt u)}$, $\rho_\gl(g) v$ is an eigenvector of $J_u$ with the opposite eigenvalue $-i$. Thus $J_u$ has eigenvalues $\pm i$ with  $2$-dimensional $\pm i$-eigenspace. Since $J_u$ commutes with the action of $G_{\Q(\sqrt u)}$, each $\pm i$-eigenspace affords a { $G_{\Q(\sqrt u)}$-action}, denoted by $\sigma_u$, $\sigma^-_u$, respectively. When $u$ is a nonsquare, by property (b), any $\tau \in G \setminus G_{\Q(\sqrt u)}$ gives rise to an $E$-isomorphism from the $(-i)$-eigenspace to the $i$-eigenspace. On the $(-i)$-eigenspace, for all $g \in G_{\Q(\sqrt u)}$, we have $\sigma_u^{-}(g)= \rho_{\gl}(g) = \rho_{\gl}(\tau^{-1} \tau g \tau^{-1} \tau) = \rho_{\gl}(\tau)^{-1} \sigma_u^{\tau}(g) \rho_{\gl}(\tau)$, which shows that $\sigma_u^{-}$ and $\sigma_u^\tau$ are isomorphic $G_{\Q(\sqrt u)}$-modules. Therefore $\rho_\gl \simeq \t{Ind}^{G}_{G_{\Q(\sqrt u)}}\sigma_u$.
Since $\rho_\gl$ is induced from $\sigma_u$, if $\rho_\gl$ is irreducible, so is $\sigma_u.$

Let $\{v_1, v_2\}$ be a basis of the $i$-eigenspace of $J_u$ and choose a nonsquare $v \in \{s, t, st\}$ not equal to $u$.  Then $v_3 :=J_vv_1$ and $v_4 :=J_v v_2$ form a basis of the $-i$-eigenspace of $J_u$. With respect to the ordered basis $\{v_1, v_2, v_3, v_4\}$ we may express the operators by the matrices
\begin{equation}\label{eq:gamma}
  J_u=\begin{pmatrix}
    iI_2&0\\0&-iI_2
  \end{pmatrix}, ~J_v=\begin{pmatrix}
    0&-I_2\\I_2&0
  \end{pmatrix}, ~J_vJ_u=\begin{pmatrix}
    0&iI_2\\iI_2&0
  \end{pmatrix}.\end{equation} This gives a representation $\gamma$ of the quaternion group generated by $J_s$ and $J_t$.

  First we prove (1).
  Let $N=G_{\Q(\sqrt{s},\sqrt{t})}$. It follows from (b) that with respect to the same basis, $\rho_\gl(g)$ is represented by $
  \begin{pmatrix}
    P(g)&0\\0&P(g)
  \end{pmatrix}$ for $g \in N$,  $
  \begin{pmatrix}
    P(g)&0\\0&-P(g)
  \end{pmatrix}$   for $g \in G_{\Q(\sqrt{u})}\setminus N$,  $
  \begin{pmatrix}
    0&P(g)\\-P(g)&0
  \end{pmatrix}$   for $g \in G_{\Q(\sqrt{v})}\setminus N$, and $
  \begin{pmatrix}
    0&P(g)\\P(g)&0
  \end{pmatrix}$   for $g \in G_{\Q(\sqrt{vu})}\setminus N$.
This shows that the map sending $g \in G_{\Q(\sqrt u)}$
to the matrix $P(g)$ is  the $2$-dimensional representation $\sigma_u$ of   $G_{\Q(\sqrt u)}$ and  the representation $\sigma_u^-$ is given by $\sigma_u \otimes \delta_u$,   where $\delta_{u}$ is the character of $G_{\Q(\sqrt{u})}$ with kernel
$G_{\Q(\sqrt{s},\sqrt{t})}$.
Next we prove (2). Since $\Q(\sqrt s, \sqrt t) = \Q(\sqrt t) = \Q(\sqrt s)$, we have $N=G_{\Q(\sqrt{s},\sqrt{t})} =G_{\Q(\sqrt t)} $.
In this case $\rho_\gl(g)$ is represented by $
  \begin{pmatrix}
    P(g)&0\\0&P(g)
  \end{pmatrix}$ for $g \in N$ and  $
  \begin{pmatrix}
    0&P(g)\\P(g)&0
  \end{pmatrix}$   for $g \in G \setminus N$.
Furthermore, $\{v_1+v_3, v_2+v_4\}$  is a basis of the
$i$-eigenspace $W$ of $J_sJ_t$, and $\{v_1-v_3, v_2-v_4\}$  is a
basis of the $(-i)$-eigenspace $W^-$ of $J_sJ_t$. Each space is
$G$-invariant by property (b) since $\Q(\sqrt s) = \Q(\sqrt t)$;
denote the $2$-dimensional representations on $W$ and $W^-$ by
$\sigma$ and $\sigma^-$, respectively. It is straightforward to
check that with respect to the above bases of $W$ and $W^-$, we have
$\sigma(g) = P(g) = \sigma^-(g)$ for $g \in N$ and $\sigma(g) = P(g) =
-\sigma^-(g)$ for $g \in G \setminus N$. This shows that
$\sigma^- = \sigma \otimes \theta$, where $\theta$ is the quadratic
character of $G$ with kernel $G_{\Q(\sqrt s, \sqrt t)}$.
  \end{proof}

If $\Q(\sqrt{s},\sqrt{t})$ is a quadratic extension of $\Q$,  the characteristic polynomial
$H_p(x)$ of $\rho_\ell({ \t{Frob}}_p)$ is the product of the characteristic polynomials of
$\sigma({ \t{Frob}}_p)$ and $(\sigma\otimes \theta)({ \t{Frob}}_p)$.
We shall see that there is also a natural factorization for $H_p(x)$ when $\Q(\sqrt{s},\sqrt{t})$ is biquadratic.

{\begin{remark} \label{charpoly}  Assume $\Q(\sqrt{s},\sqrt{t})$ is biquadratic over $\Q$. As stated in the theorem above, for any $\tau \in G \setminus G_{\Q(\sqrt u)}$, we have
 $\sigma_u^{\tau}  \simeq \sigma_u \otimes \delta_u$. Thus for any prime $p   \ne \ell$ splitting into two places $\gp_{\pm}$ in $\Q(\sqrt u)$, $\tau$ permutes the two places $\gp_{\pm}$; and if $\rho_\ell$ is unramified at $p$, the characteristic polynomial of $\sigma_u^\tau({ \t{Frob}}_{\gp_+})= \sigma_u({ \t{Frob}}_{\gp_+})\delta_u({ \t{Frob}}_{\gp_+})$ is equal to that of $\sigma_u({ \t{Frob}}_{\gp_-})$. Therefore
  $\sigma_u({ \t{Frob}}_{\gp_{\pm}})$ have the same characteristic polynomials
 if $\delta_u({ \t{Frob}}_{\gp_{\pm}}) = 1$, which occurs if and only if $p$ splits in $\Q(\sqrt s, \sqrt t)$.
\end{remark}

The statement (a) below on the factorization of the characteristic polynomial of $\rho_\ell({ \t{Frob}}_p)$ at an unramified place $p$ follows immediately from Theorem \ref{quaternion} and the remark above.

\begin{cor}\label{factorization} Let $\rho_\ell$ be a $4$-dimensional $\ell$-adic representation of $G$ with QM over $\Q(\sqrt s, \sqrt t)$, a biquadratic extension of $\Q$. For $u \in \{s, t, st\}$, let $\sigma_u$ be as in Theorem \ref{quaternion}.
\begin{itemize}
\item[(a)] Let $p$ be a prime different from $\ell$ at which $\rho_\ell$ is unramified. Then the degree-$4$ characteristic polynomial $H_p(x)$ of $\rho_\ell ({ \t{Frob}}_p)$  and the degree-$2$ characteristic polynomial $H_{\gp, u}(x)$ of $\sigma_u({ \t{Frob}}_{\gp})$ at a place $\gp$ of $\Q(\sqrt u)$ dividing $p$ are related as follows:
\begin{itemize}
\item[(a1)] If $p$ splits completely in $\Q(\sqrt s, \sqrt t)$, then $H_p(x) = H_{\gp, u}(x)^2$, where $H_{\gp, u}(x) = x^2 - A(p)x + B(p)$  is independent of the choice of $\gp$ and $u$;
\item[(a2)] If $p$ splits in $\Q(\sqrt u)$ and not in $\Q(\sqrt s, \sqrt t)$, then there are two places $\gp_{\pm}$ of $\Q(\sqrt u)$ dividing $p$, and $H_p(x) = H_{\gp_+, u}(x) H_{\gp_-, u}(x) $, where $H_{\gp_{\pm}, u}(x) = x^2 \mp A_u(p)x + B_u(p)$.
\item[(a3)] If $p$ is inert in $\Q(\sqrt u)$, then there is one place $\gp$ of $\Q(\sqrt u)$ dividing $p$, and $H_p(x) = H_{\gp, u}(x^2)$.
\end{itemize}

\item[(b)] Suppose $u > 0$, $c \in G_{\Q(\sqrt u)}$ is a complex conjugation, and $\tau \in G \setminus G_{\Q(\sqrt u)}$. Then $\Tr \sigma_u(c) = \Tr \sigma_u^\tau(c)$ if $\Q(\sqrt s, \sqrt t)$ is totally real, and $\Tr \sigma_u(c) = - \Tr \sigma_u^\tau(c)$ otherwise.
\end{itemize}
\end{cor}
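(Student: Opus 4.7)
The plan is to exploit the structure $(\rho_\ell\otimes_F E)|_{G_{\Q(\sqrt u)}}\simeq \sigma_u\oplus\sigma_u^{-}$ from Theorem \ref{quaternion}, together with the identifications $\sigma_u^{-}\cong \sigma_u\otimes\delta_u\cong \sigma_u^{\tau}$ for $\tau\in G\setminus G_{\Q(\sqrt u)}$. For part (a), I will split the analysis by the decomposition type of $p$ in $\Q(\sqrt u)$. In the split cases (a1) and (a2) I take $\t{Frob}_p\in G_{\Q(\sqrt u)}$, making it a Frobenius at one of the primes $\gp_{\pm}$ above $p$, say $\gp_+$; then $H_p(x)$ equals the product of the characteristic polynomials of $\sigma_u(\t{Frob}_{\gp_+})$ and $\sigma_u^{-}(\t{Frob}_{\gp_+})$. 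Conjugating by $\tau$, which permutes $\gp_{\pm}$, identifies the second factor with $H_{\gp_-,u}(x)$, yielding $H_p(x)=H_{\gp_+,u}(x)\,H_{\gp_-,u}(x)$. Remark \ref{charpoly} then finishes both cases: in (a1), complete splitting in $\Q(\sqrt s,\sqrt t)$ forces $\delta_u(\t{Frob}_{\gp_{\pm}})=1$, so the two factors coincide and $H_p=H_{\gp,u}^2$, with independence of $H_{\gp,u}$ from $u$ forced by that of $H_p$; in (a2), $\delta_u(\t{Frob}_{\gp_{\pm}})=-1$ makes $\sigma_u^{-}(\t{Frob}_{\gp_+})$ conjugate to $-\sigma_u(\t{Frob}_{\gp_+})$, flipping the sign of the linear coefficient exactly as asserted.

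For (a3), $p$ is inert in $\Q(\sqrt u)$, so $g:=\t{Frob}_p\notin G_{\Q(\sqrt u)}$ while $g^2$ represents a Frobenius at the unique prime $\gp$ above $p$. I will write the induced representation in block form relative to the decomposition $V\simeq V_{\sigma_u}\oplus \tau V_{\sigma_u}$: the matrix of $\rho_\ell(g)$ is then anti-block-diagonal, and a direct Schur-complement computation of its characteristic polynomial gives $\det(x^2 I_2-\sigma_u^{\tau}(g^2))$. The key observation is that $g^2$ fixes every element of the biquadratic field $\Q(\sqrt s,\sqrt t)$, hence lies in $\ker\delta_u$, so $\sigma_u^{\tau}(g^2)$ and $\sigma_u(g^2)=\sigma_u(\t{Frob}_{\gp})$ share a characteristic polynomial. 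This yields $H_p(x)=H_{\gp,u}(x^2)$.

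For part (b), the isomorphism $\sigma_u^{\tau}\simeq \sigma_u\otimes\delta_u$ gives $\Tr\,\sigma_u^{\tau}(c)=\delta_u(c)\,\Tr\,\sigma_u(c)$. Since $u>0$, complex conjugation $c$ fixes $\sqrt u$, so $c\in G_{\Q(\sqrt u)}$ and $\delta_u(c)$ is defined; it equals $+1$ exactly when $c$ also fixes $\sqrt s$ and $\sqrt t$, i.e., exactly when $\Q(\sqrt s,\sqrt t)$ is totally real. Otherwise a short parity argument shows that exactly one of $s,t,st$ is positive (the product of any two signs determines the third), which must be $u$ itself; the other generator of $\Q(\sqrt s,\sqrt t)$ then has imaginary square root, is not fixed by $c$, and $\delta_u(c)=-1$. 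I expect the only real subtlety in the whole proof to be the sign bookkeeping in (a3)---specifically, verifying that the two diagonal blocks of $\rho_\ell(g)^2$ genuinely share a characteristic polynomial, which hinges on the identity $\delta_u(\t{Frob}_p^2)=1$.
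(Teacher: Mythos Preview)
Your proof is correct and follows essentially the same route as the paper: part (a) is derived from the decomposition $\rho_\ell|_{G_{\Q(\sqrt u)}}\simeq\sigma_u\oplus\sigma_u^{-}$ of Theorem~\ref{quaternion} together with Remark~\ref{charpoly}, and part (b) from $\Tr\,\sigma_u^{\tau}(c)=\delta_u(c)\,\Tr\,\sigma_u(c)$ and the computation of $\delta_u(c)$. The paper is terser (it simply asserts that (a) is immediate from the theorem and the remark), whereas you spell out the inert case (a3) via the anti-block-diagonal Schur complement; your extra observation that $\delta_u(\t{Frob}_p^2)=1$ is valid but not strictly needed there, since the two diagonal blocks $AB$ and $BA$ of $\rho_\ell(g)^2$ always share a characteristic polynomial.
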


\begin{proof} It remains to prove part (b). This follows from the fact that $\sigma_u^\tau(c) = \sigma_u(c) \delta_u(c)$ and $\delta_u(c) = 1$ if and only if $\Q(\sqrt s, \sqrt t)$ is totally real.
\end{proof}

 Consequently, in case (a2) above,  $H_p(x)$ is also the characteristic polynomial of $\rho_\ell({ \t{Frob}}_{\gp_{\pm}})$. Since a prime $p$ splits in at least one of the quadratic {fields contained in the biquadratic field $\Q(\sqrt s,\sqrt t)$}, when the representation $\rho_\ell$ admits QM over $\Q(\sqrt s, \sqrt t)$, we obtain a natural factorization of the characteristic polynomial of $\rho_\ell ({ \t{Frob}}_p)$ as a product of two quadratic polynomials.
\smallskip

\begin{prop}\label{conjugation} Keep the same notation and assumptions as in Theorem \ref{quaternion}.
Assume that
$-1$ is not a square in $F$.
Let $j$ denote a square root of $-1$ and set $E= F(j)$.
\begin{itemize}
\item[(a)] For any $g \in G_{\Q(\sqrt u)}$, let $H_{g,+}(x)$ and $H_{g,-}(x)$ be the characteristic polynomials
of  $\sigma_u(g)$ and $\sigma_u^-(g)$ respectively. Then $H_{g,+}(x)$ and $H_{g,-}(x)$ are conjugate under the map $j \mapsto -j$.

\item[(b)] Assume $\Q(\sqrt s, \sqrt t)$ is biquadratic over $\Q$. For $u$, $c$ and $\tau$ as in Corollary \ref{factorization} (b), we have $\Tr \sigma_u(c) = \Tr \sigma_u^\tau(c)$. Therefore $\sigma_u$ is odd if $\Q(\sqrt s, \sqrt t)$ is not totally real.

\item[(c)] Assume $\Q(\sqrt s, \sqrt t)$ is quadratic over $\Q$. Then $\Tr \sigma(c) = \Tr \sigma(c) \theta(c)$ for the complex conjugation $c \in G$.
\end{itemize}
Consequently, if $\Tr \rho_\ell(c) = 0$ at the complex conjugation $c$, then $\sigma$ is odd when $\Q(\sqrt s, \sqrt t)$ is quadratic, and $\sigma_u$ is odd for positive $u \in \{s, t, st\}$  when $\Q(\sqrt s, \sqrt t)$ is biquadratic.
\end{prop}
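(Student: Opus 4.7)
The plan is to lift the identification $\sigma_u^-\cong\sigma_u\otimes\delta_u$ to a Galois-descent statement over $E/F$, by exploiting the $F$-rationality of both $\rho_\ell$ and the operators $J_s, J_t$. Let $\tau_j\in\Gal(E/F)$ denote the nontrivial automorphism $j\mapsto -j$. I would introduce the $\tau_j$-semilinear involution $\phi$ on $V:=W_\ell\otimes_F E$ sending $w\otimes e$ to $w\otimes\tau_j(e)$. Since $J_u$ is $F$-linear it commutes with $\phi$, so for $v$ in the $+j$-eigenspace $W_+$ one finds $J_u\phi(v)=\phi(J_u v)=\phi(jv)=-j\,\phi(v)$; hence $\phi$ interchanges $W_+$ with the $-j$-eigenspace $W_-$. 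The $F$-linearity of every $\rho_\ell(g)$ similarly gives $\phi\,\rho_\ell(g)=\rho_\ell(g)\,\phi$, and restricting to $W_+$ this intertwines $\sigma_u$ with $\sigma_u^-$. Working in an $E$-basis $\{v_1,v_2\}$ of $W_+$ with image $\{\phi(v_1),\phi(v_2)\}$ as an $E$-basis of $W_-$, the intertwining forces the matrix of $\sigma_u^-(g)$ to equal $\tau_j$ applied entrywise to that of $\sigma_u(g)$; this gives $H_{g,-}(x)=\tau_j(H_{g,+}(x))$ and establishes~(a).

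For~(b) I would specialize~(a) to $g=c$. The identity $c^2=1$ forces $\sigma_u(c)^2=I$, so the characteristic polynomial of $\sigma_u(c)$ has integer coefficients and hence lies in $F[x]$, automatically $\tau_j$-fixed; (a) then collapses to $H_{c,+}(x)=H_{c,-}(x)$, i.e.\ $\Tr\sigma_u(c)=\Tr\sigma_u^-(c)=\Tr\sigma_u^\tau(c)$ upon invoking $\sigma_u^-\cong\sigma_u^\tau$ from Theorem~\ref{quaternion}. When $\Q(\sqrt s,\sqrt t)$ is not totally real, Corollary~\ref{factorization}(b) supplies the opposite identity $\Tr\sigma_u(c)=-\Tr\sigma_u^\tau(c)$, and the two together force both traces to vanish, yielding the oddness of $\sigma_u$. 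For~(c) the same semilinear descent applies to the decomposition in Theorem~\ref{quaternion}(2) with $J_sJ_t$ in place of $J_u$: the involution $\phi$ swaps the $\pm j$-eigenspaces $W$ and $W^-$ and intertwines $\sigma$ with $\sigma^-=\sigma\otimes\theta$, so specializing to $g=c$ (using $\sigma(c)^2=I$) yields $\Tr\sigma(c)=\Tr(\sigma(c)\theta(c))=\Tr\sigma(c)\,\theta(c)$.

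The concluding assertion would follow by assembling the pieces. In the quadratic case $\Tr\rho_\ell(c)=\Tr\sigma(c)(1+\theta(c))$: if $\theta(c)=-1$ then~(c) alone yields $\Tr\sigma(c)=0$, while if $\theta(c)=1$ the hypothesis $\Tr\rho_\ell(c)=0$ reads $2\Tr\sigma(c)=0$. In the biquadratic case with $u>0$, the induced-character formula together with~(b) gives $\Tr\rho_\ell(c)=\Tr\sigma_u(c)+\Tr\sigma_u^\tau(c)=2\Tr\sigma_u(c)$, so the hypothesis again forces $\Tr\sigma_u(c)=0$ and $\sigma_u$ is odd. The main technical subtlety I anticipate is the careful bookkeeping of $F$-linear versus $\tau_j$-semilinear behaviour of $\phi$ when identifying matrices in~(a); once that is in place, the rest is entirely elementary.
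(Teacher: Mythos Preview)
Your proof is correct and takes essentially the same approach as the paper's: both exploit the $F$-rationality of $\rho_\ell$ and $J_u$ to conclude that $\sigma_u$ and $\sigma_u^-$ are $\Gal(E/F)$-conjugate, and parts (b), (c), and the final consequence proceed identically. The only difference is presentational: the paper carries out (a) via an explicit $F$-basis $\{w, w', J_u w, J_u w'\}$ of $W_\ell$ and a block-matrix computation (writing $\rho_\ell(g)$ in this basis as $\begin{pmatrix} P & R \\ -R & P \end{pmatrix}$ with $P,R\in M_2(F)$ and diagonalizing $J_u$ to obtain $P\pm jR$), whereas you encode the same content more conceptually through the $\tau_j$-semilinear involution $\phi$.
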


\begin{proof} (a)
Let $w$ be a nonzero vector in  the representation space ${W_{\ell}}$ of $\rho_\ell$. Then $w$ and $J_u w$ are linearly independent over $F$ since the eigenvalues of $J_u$ are outside $F$ by assumption. Let $w'$ be a vector in ${W_{\ell}}$ and not in the $F$-span $\langle w, J_u w \rangle$. Claim that $w, w', J_u w, J_u w'$ are linearly independent over $F$. If not, then $J_u w' = \alpha w + \beta J_u w + \gamma w'$ for some $\alpha, \beta, \gamma \in F$ not all zero. Apply $J_u$ to the above relation to get another relation $\gamma J_u w' = \beta w - \alpha J_u w - w'$. Comparing both relations yields $\gamma^2 = -1$, a contradiction. Therefore $\{w, w', J_u w, J_u w'\}$ forms an $F$-basis of ${W_{\ell}}$. With respect to this ordered basis, $J_u$ is represented by the block matrix
$$ \begin{pmatrix} 0 & -I_2 \\ I_2 & 0 \end{pmatrix}.$$

On the extension ${W_{\ell}}\otimes_F E$ the actions of $\rho_\ell(G)$ and $J_u$ are $E$-linear. Recall that $\sigma_u$ and $\sigma^-_u$ are $\rho_\ell$ restricted to the $\pm j$-eigenspaces of $J_u$ on ${W_{\ell}} \otimes_F E$. It is easy to check that
$\{w- j J_u w,  w' - j J_u w'\}$ forms an $E$-basis for the space of $\sigma_u$ and $\{w+ j J_u w,  w' + j J_u w'\}$ forms a basis for the space of $\sigma^-_u$. In other words,
$$ \begin{pmatrix}I_2 & -jI_2 \\ I_2 & j I_2 \end{pmatrix} \begin{pmatrix} 0 & -I_2 \\ I_2 & 0 \end{pmatrix}
 \begin{pmatrix}I_2 & -jI_2 \\ I_2 & j I_2 \end{pmatrix}^{-1} = \begin{pmatrix} j I_2 & 0 \\ 0 & -j I_2 \end{pmatrix}.$$

Since $J_u$ commutes with $\rho_\ell(G_{\Q(\sqrt u)})$ by condition (b) of QM, given any $g \in G_{\Q(\sqrt u)}$,  the matrix of $\rho_\ell(g)$ with respect to $\{w, w', J_u w, J_u w'\}$ is represented by the block matrix
$$\begin{pmatrix} P & R \\ -R & P \end{pmatrix}$$
for some $2 \times 2$ matrices $P$ and $R$ with entries in $F$. One checks that
$$ \begin{pmatrix}I_2 & -jI_2 \\ I_2 & j I_2 \end{pmatrix} \begin{pmatrix} P & R \\ -R & P \end{pmatrix}
 \begin{pmatrix}I_2 & -jI_2 \\ I_2 & j I_2 \end{pmatrix}^{-1} = \begin{pmatrix} P+jR & 0 \\ 0 & P-jR \end{pmatrix}.$$
 This shows that the action of $\rho_\ell(g)$ on the $\pm j$-eigenspaces of $J_u$ are represented by matrices conjugate under $j \mapsto -j$, hence the characteristic polynomials ${H_{g,\pm}(x)}$ are as asserted.

(b) For $u > 0$, a complex conjugation $c$ lies in $G_{\Q(\sqrt u)}$. By applying (a) to $g = c$, we get that $\Tr \sigma_u(c)$ and $\Tr \sigma_u^\tau(c)$ are conjugate under $j \mapsto -j$. As $c^2 = id$, the possible values for $\Tr \sigma_u(c)$ are $\pm 2$ and $0$, which all lie in $F$. Hence $\Tr \sigma_u(c) = \Tr \sigma_u^\tau(c)$. When $\Q(\sqrt s, \sqrt t)$ is not totally real, it follows from Corollary \ref{factorization} (b) that the two traces are opposite, therefore they are equal to zero, and hence $\sigma_u$ is odd.

(c) In this case, choose $u = st$, which is a square. The operator $J_u = J_{st}:= J_s J_t$ commutes with $\rho_\ell(G)$. The representation $\sigma_u$ is called $\sigma$ and $\sigma_u^-$ called $\sigma \otimes \theta$ in Theorem \ref{quaternion}.
Apply (a) to $g = c$, the complex conjugation in $G$. By the same argument as in (b) we conclude
 $\Tr \sigma(c) = \Tr \sigma(c)\theta(c)$.

Finally, using $0 = \Tr \rho_\ell(c) = \Tr \sigma(c) + \Tr \sigma(c)\theta(c) = 2 \Tr \sigma(c)$ when  $\Q(\sqrt s, \sqrt t)$ is quadratic, and $0 = \Tr \rho_\ell(c) = \Tr \sigma_u(c) + \Tr \sigma_u^\tau(c) = 2 \Tr \sigma_u(c)$ for positive $u \in \{s, t, st\}$ when
$\Q(\sqrt s, \sqrt t)$ is biquadratic, we conclude the oddness of $\sigma$ and $\sigma_u$ respectively.
\end{proof}

{

\subsection{Quaternion multiplication and extension of representations} Let $K$ be a finite extension of $\Q$ and  $V$ be an $\ell$-adic representation of $G_K$.  We say that $V$ can be \emph{extended} to an $\ell$-adic  representation $V'$ of $G$ if $V'|_{G_K} \sim V$. Note that $V'$, if exists, is not unique in general.

Assume that $\rho_\ell$ has QM over a biquadratic $\Q(\sqrt s, \sqrt t)$. By Theorem \ref{quaternion}, $\rho_\ell \sim \textnormal{Ind}_{G_{\Q(\sqrt u)}}^G \sigma_u$ for $u \in \{ s, t, st\}$.
We shall show that, for each $u \in \{ s, t, st\}$, there exists a finite character $\chi_u$ of
$G_{\Q(\sqrt{u})}$ and a 2-dimensional representation $\eta_u$ of $G$ such that $\sigma_u\otimes \chi_u\sim\eta_u|_{G_{\Q(\sqrt{u})}}$. In other words, $\sigma _u \otimes \chi_u$ can be extended to $\eta_u$.

To see this,  we return to the proof of Theorem \ref{quaternion}, (1) with a chosen $u \in \{s, t, st\}$.  The argument there shows that given $g, h\in G$, there exists a constant $\a_0(g,h)\in \{\pm 1\}$ such that $P(g)P(h)=\a_0(g,h)P(gh)$.
Hence $g \mapsto P(g)$ defines a degree 2 irreducible projective representation $\tilde \rho_u$ of $G$ whose restriction to $G_{\Q(\sqrt u)}$ is the representation $\sigma_u$.
Let $\tilde \g_u$ be the map sending $gN$ for $g\in N,  ~G_{\Q(\sqrt{u})}\smallsetminus N, ~G_{\Q(\sqrt{v})}\smallsetminus N, ~G_{\Q(\sqrt{vu})}\smallsetminus N$ to $$\begin{pmatrix}
  1&0\\0&1
\end{pmatrix}, ~\begin{pmatrix}
  1&0\\0&-1
\end{pmatrix}, ~\begin{pmatrix}
  0&1\\-1&0
\end{pmatrix}, ~\begin{pmatrix}
  0&1\\1&0
\end{pmatrix},$$ respectively. Note that $\tilde \g_u$ is a degree $2$ {irreducible} projective representation of $G$ trivial on $N$ and \begin{equation}\label{eq:proj-repn}
\rho_\ell=\tilde \rho_u \otimes \tilde \g_u.
\end{equation}
By Tate's vanishing theorem ${\rm H}^2(G, \mathbb C^\times)= 0$ (\cite{tate}), there is a  representation $\g_u : G \rightarrow {\rm GL}_2(\mathbb C)$ with finite image which lifts $\tilde \g_u$. Then the kernel $H$ of $\g_u$
is a normal subgroup of $N$ with quotient $N/H$ finite cyclic. Embed the finite image $\g_u(G)$ into ${\rm GL}_2(\overline {\Q}_\ell)$ and regard $\g_u$ as an $\ell$-adic representation.

For any $g\in G$, there exists $s_u(g)\in   \overline{\Q}_\ell^{\times}$ such that $\tilde \g_u(g)= s_u(g) \g_u (g)$. It follows from $\rho_\ell= \tilde \rho_u \otimes \tilde \g_u= \tilde \rho_u \cdot s_u \otimes s_u^{-1}\cdot \tilde \g_u=\tilde \rho_u \cdot s_u\otimes \g_u $ that $\eta_u:= \tilde \rho_u \cdot s_u$ is also an ordinary representation of $G$. Since $\tilde \rho_u$ restricted to $G_{\Q(\sqrt u)}$ is equal to the representation $\sigma_u$ as noted above, $s_u$ restricted to $G_{\Q(\sqrt u)}$ is a character, denoted by $\chi_u$, with kernel containing $H$. Hence $\eta_u$ is an extension to $G$ of the representation $\sigma_u \otimes \chi_u$ of $G_{\Q(\sqrt u)}$. Note that $\g_u$ and ${\rm{Ind}}_{G_{\Q(\sqrt{u})}}^{G} \chi_u^{-1}$
have the same restrictions to $G_{\Q(\sqrt u)}$, so they differ at most by the quadratic character $\theta_u$ of $G$ with kernel $G_{\Q(\sqrt u)}$. Replacing $\g_u$ by $\g_u \otimes \theta_u$ and $\eta_u$ by $\eta_u \otimes \theta_u$ if necessary, we may assume that
$\g_u = {\rm{Ind}}_{G_{\Q(\sqrt{u})}}^{G} \chi_u^{-1}$ so that
$\rho_\ell \sim \eta_u \otimes {\rm{Ind}}_{G_{\Q(\sqrt{u})}}^{G} \chi_u^{-1}$.  This proves

\begin{theorem}\label{thm:exist-chi_u}
  Let $\rho_\ell$ be a $4$-dimensional $\ell$-adic representation of $G$ with QM over a biquadratic $\Q(\sqrt s, \sqrt t)$.  For $u \in \{s, t, st\}$, let $\sigma_u$ be as in Theorem \ref{quaternion}.  Then there exists a finite character $\chi_u$ of $G_{\Q(\sqrt u)}$ such that $\sigma_u \otimes \chi_u$ extends to a degree $2$ representation $\eta_u$ of $G$ and { $$\rho_\ell \sim {\rm{Ind}}_{G_{\Q(\sqrt{u})}}^{G} \sigma_u \sim \eta_u\otimes{\rm{Ind}}_{G_{\Q(\sqrt{u})}}^{G} \chi_u^{-1}.$$}
\end{theorem}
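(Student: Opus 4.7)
The plan is to unpack the explicit block-matrix realization from the proof of Theorem \ref{quaternion}(1) to write $\rho_\ell$ as the tensor product of two degree-$2$ projective representations of $G$, one of which has finite image; then use Tate's vanishing theorem to promote each projective factor to a genuine representation; and finally reconcile the finite-image factor with the induction of a finite character as claimed.

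Concretely, fix $u \in \{s, t, st\}$ and $v \in \{s, t, st\} \setminus \{u\}$. With respect to the ordered basis $\{v_1, v_2, J_v v_1, J_v v_2\}$ from the proof of Theorem \ref{quaternion}(1), $\rho_\ell(g)$ is a $4\times 4$ block matrix whose two nonzero $2\times 2$ blocks are a common matrix $P(g) \in \mathrm{GL}_2(E)$, while the $\{\pm I_2, 0\}$ placement depends only on which of the four cosets of $N = G_{\Q(\sqrt s, \sqrt t)}$ contains $g$. Multiplying two such block matrices and comparing with $\rho_\ell(gh)$ shows $P(g)P(h) = \alpha_0(g,h) P(gh)$ with $\alpha_0(g,h) \in \{\pm 1\}$, so $\tilde\rho_u : g \mapsto P(g)$ is a projective representation of $G$ whose restriction to $G_{\Q(\sqrt u)}$ is (honestly) $\sigma_u$; the placement part assembles into a second projective representation $\tilde\gamma_u$, trivial on $N$ and factoring through the Klein four-group $G/N$, and the equality $\rho_\ell = \tilde\rho_u \otimes \tilde\gamma_u$ of operators gives the asserted projective factorization.

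Next I would invoke Tate's vanishing theorem $H^2(G, \overline\Q_\ell^\times) = 0$ to lift $\tilde\gamma_u$ to an honest finite-image representation $\gamma_u : G \to \mathrm{GL}_2(\overline\Q_\ell)$, and write $\tilde\gamma_u(g) = s_u(g)\gamma_u(g)$ for a cochain $s_u : G \to \overline\Q_\ell^\times$. The $2$-cocycle attached to $\tilde\rho_u$ is the inverse of that attached to $\tilde\gamma_u$ (since $\rho_\ell$ is genuine), so the rescaled cochain $\eta_u := s_u \cdot \tilde\rho_u$ satisfies $\eta_u(g)\eta_u(h) = \eta_u(gh)$ directly, i.e.\ is an honest $2$-dimensional representation of $G$. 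Restricting $s_u$ to $G_{\Q(\sqrt u)}$, where $\tilde\rho_u$ and $\tilde\gamma_u$ are both already genuine (the former equal to $\sigma_u$, the latter a diagonal $\pm 1$-valued representation), forces $s_u|_{G_{\Q(\sqrt u)}}$ to be a character $\chi_u$; it is finite because $\gamma_u$ has finite image and $\tilde\gamma_u$ takes only finitely many values. By construction, $\eta_u|_{G_{\Q(\sqrt u)}} \sim \sigma_u \otimes \chi_u$.

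Finally, to upgrade $\gamma_u$ to an induced character, compare $\gamma_u$ with $\mathrm{Ind}_{G_{\Q(\sqrt u)}}^G \chi_u^{-1}$: both restrict to the same representation of $G_{\Q(\sqrt u)}$ (namely $\chi_u^{-1}$ twisted appropriately), so by Mackey / Frobenius reciprocity they differ at most by the unique nontrivial character $\theta_u$ of $G/G_{\Q(\sqrt u)}$. Twisting $\gamma_u$ by $\theta_u$ and simultaneously twisting $\eta_u$ by $\theta_u$ leaves $\eta_u \otimes \gamma_u = \rho_\ell$ intact and puts the decomposition into the form asserted. The main technical obstacle is the bookkeeping at the first step: one must verify, using the commutation relations (a) and (b) of the QM operators, that the sign cocycle $\alpha_0(g,h)$ is consistent across all four coset configurations and that the coset matrices for $\tilde\gamma_u$ indeed form a projective representation; once that is settled, Tate's theorem and the character correction are essentially formal.
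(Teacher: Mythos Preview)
Your proposal is correct and follows essentially the same route as the paper's proof: the paper likewise extracts the projective factorization $\rho_\ell = \tilde\rho_u \otimes \tilde\gamma_u$ from the block-matrix description in Theorem~\ref{quaternion}, lifts $\tilde\gamma_u$ via Tate's vanishing $\mathrm{H}^2(G,\mathbb C^\times)=0$ to a finite-image $\gamma_u$, defines $\eta_u := s_u\cdot\tilde\rho_u$ with $\chi_u := s_u|_{G_{\Q(\sqrt u)}}$, and then adjusts by $\theta_u$ to identify $\gamma_u$ with $\mathrm{Ind}_{G_{\Q(\sqrt u)}}^G \chi_u^{-1}$. The only cosmetic difference is that the paper states Tate's theorem over $\mathbb C^\times$ and then embeds the finite image into $\mathrm{GL}_2(\overline{\Q}_\ell)$, whereas you phrase it directly over $\overline{\Q}_\ell^\times$.
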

\smallskip

\begin{remark} As irreducible projective representations of $\Gal(\Q(\sqrt s, \sqrt t)/\Q)$, the $\tilde \g_u$'s are in fact equivalent. This can be shown directly {  (for instance, $\tilde \g_u$ conjugated by $\begin{pmatrix} 1 & i \\ i & 1 \end{pmatrix}$ is equivalent to $ \tilde \g_v$) or seen from the fact that the Schur multiplier of $\Gal(\Q(\sqrt s, \sqrt t)/\Q)$ has only one nontrivial element. Hence the $\eta_u$'s are also projectively equivalent. In particular, they have the same parity, equal to that of $\eta_u$ with $u > 0$.}
\end{remark}

 We close this section by discussing the oddness of the representations occurred.

\begin{prop}\label{oddness} With the same notation and assumption as in Theorem \ref{thm:exist-chi_u}, we have

(a) for positive $u \in \{s, t, st \}$, $\eta_u$ is odd if and only if $\sigma_u$ is odd at both real places of $\Q(\sqrt u)$;

(b) the representation ${\rm{Ind}}_{G_{\Q(\sqrt{u})}}^{G} \chi_u^{-1}$ of $G$ corresponds to an automorphic representation of $\rm{GL}_2$ over $\Q$. Moreover, it is { odd if $\Q(\sqrt s, \sqrt t)$ is not totally real and even otherwise.}
\end{prop}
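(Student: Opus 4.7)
My plan for (a) is a direct determinant calculation at the complex conjugation. Since $u>0$, the complex conjugation $c\in G$ lies in $G_{\Q(\sqrt u)}$, and from $\eta_u|_{G_{\Q(\sqrt u)}}\sim\sigma_u\otimes\chi_u$ one gets $\det\eta_u(c)=(\det\sigma_u(c))\chi_u(c)^2=\det\sigma_u(c)$, using that $\chi_u(c)^2=\chi_u(c^2)=1$. Thus $\eta_u$ is odd iff $\sigma_u$ is odd at the real place of $\Q(\sqrt u)$ corresponding to $c$. The other real place is represented by $c'=\tau c\tau^{-1}$ for any $\tau\in G\setminus G_{\Q(\sqrt u)}$, and the relation $\sigma_u^\tau\simeq\sigma_u\otimes\delta_u$ from Theorem \ref{quaternion} gives $\det\sigma_u(c')=(\det\sigma_u(c))\delta_u(c)^2=\det\sigma_u(c)$. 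Hence oddness at one real place is equivalent to oddness at the other, which proves (a).

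For the automorphy assertion in (b), $\chi_u^{-1}$ is a finite-order character of $G_{\Q(\sqrt u)}$, so by class field theory it corresponds to a finite-order Hecke character of the quadratic field $\Q(\sqrt u)$. Automorphic induction from ${\rm GL}_1(\mathbb A_{\Q(\sqrt u)})$ to ${\rm GL}_2(\mathbb A_\Q)$, equivalently Hecke's classical theta-series construction, then produces an automorphic representation of ${\rm GL}_2(\mathbb A_\Q)$ whose associated two-dimensional Galois representation is ${\rm Ind}_{G_{\Q(\sqrt u)}}^{G}\chi_u^{-1}$.

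The parity claim in (b) splits into two cases. If $\Q(\sqrt s,\sqrt t)$ is not totally real, the identity $s\cdot t\cdot st=s^2t^2>0$ forces exactly one of $s,t,st$ to be positive and the other two to be negative. Pick $u^*\in\{s,t,st\}$ with $u^*<0$; then $\Q(\sqrt{u^*})$ is imaginary quadratic, $c\notin G_{\Q(\sqrt{u^*})}$, and the matrix of ${\rm Ind}_{G_{\Q(\sqrt{u^*})}}^{G}\chi_{u^*}^{-1}(c)$ in the basis indexed by the coset representatives $\{1,c\}$ is antidiagonal with determinant $-1$, so $\gamma_{u^*}$ is odd. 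By the remark just before the proposition, the three $\tilde\gamma_u$'s are projectively equivalent (the Schur multiplier of $\Gal(\Q(\sqrt s,\sqrt t)/\Q)\cong \mathbb Z/2\times\mathbb Z/2$ has a unique nontrivial class, whose two-dimensional lift is unique up to isomorphism), so the lifts $\gamma_u$ differ pairwise by characters of $G$; a twist by a character $\psi$ multiplies the determinant at $c$ by $\psi(c)^2=1$, preserving parity, and hence all three $\gamma_u$'s are odd. If $\Q(\sqrt s,\sqrt t)$ is totally real, then $c$ lies in $N:=G_{\Q(\sqrt s,\sqrt t)}$; by construction $\tilde\gamma_u$ is trivial on $N$, so $\gamma_u(N)\subset Z({\rm GL}_2)$ and $\gamma_u(c)=\psi I_2$ for some scalar $\psi$ with $\psi^2=\psi(c^2)=1$, giving $\det\gamma_u(c)=1$, so $\gamma_u$ is even.

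The only point requiring real care is the projective-equivalence step that links the three choices of $u$ in the not-totally-real case; once this is accepted, the parity at $c$ is determined by a single well-chosen $u$, and everything else is a direct consequence of the structural information in Theorems \ref{quaternion} and \ref{thm:exist-chi_u} together with standard facts about characters and induced representations.
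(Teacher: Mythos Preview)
Your proof is correct. Part (a) is essentially the paper's argument: both compute $\det\eta_u(c)=\det\sigma_u(c)\cdot\chi_u(c)^2=\det\sigma_u(c)$ and then verify that $\sigma_u$ has the same parity at both real places of $\Q(\sqrt u)$. You deduce this from $\sigma_u^\tau\simeq\sigma_u\otimes\delta_u$ and $\delta_u(c)^2=1$; the paper deduces it from the fact that $\det\eta_u$ is a character of $G$, hence constant on the $G$-conjugacy class of $c$. These are interchangeable.

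For (b), the automorphy and the totally-real case match the paper. In the not-totally-real case your route differs: you pick a negative $u^*$, compute $\gamma_{u^*}(c)$ directly as an antidiagonal matrix in the induced model (determinant $-1$), and then invoke the projective equivalence of the $\tilde\gamma_u$'s from the preceding Remark to transfer oddness to the remaining $u$'s via a character twist. The paper instead treats all $u$ uniformly: since $c\notin N$, the explicit description of $\tilde\gamma_u$ on $G/N$ shows $\gamma_u(c)$ is a scalar multiple of one of the three non-identity matrices $\begin{pmatrix}1&0\\0&-1\end{pmatrix}$, $\begin{pmatrix}0&1\\-1&0\end{pmatrix}$, $\begin{pmatrix}0&1\\1&0\end{pmatrix}$, each of trace zero, so $\gamma_u$ is odd for every $u$ without any transfer step. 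Your approach is perfectly valid but leans on the Remark; the paper's is self-contained and slightly shorter.
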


\begin{proof} Denote by $c$ the complex conjugation in $G$.

(a) The relation $\eta_u|_{G_{\Q(\sqrt u)}}=\sigma_u\otimes\chi_u$ implies $\det (\eta_u) |_{ G_{\Q(\sqrt u)}}= \det (\sigma_u)\cdot \chi_u^2$. As $\det (\eta_u)(g ^{-1}cg )$ remains the same for all $g$ in $G$, we see that $\sigma_u$ and $\eta_u$ have the same parity.

(b) The first statement follows from base change \cite{ac89}. Write $\g_u$ for $ {\rm{Ind}}_{G_{\Q(\sqrt{u})}}^{G} \chi_u^{-1}$ as in the proof of Theorem \ref{thm:exist-chi_u}. Assume that $\Q(\sqrt s, \sqrt t)$ is not totally real so that $c \notin G_{\Q(\sqrt s, \sqrt t)}$. By definition, there is a constant $b \in \overline \Q_\ell^\times$ such that  $ \g_u(c) = \begin{pmatrix}
  b&0\\0&-b
\end{pmatrix}$ or  $ \begin{pmatrix}
  0&b\\b&0
\end{pmatrix}$ or $ \begin{pmatrix}
  0&b\\-b&0
\end{pmatrix}$. In all cases, $\g_u(c)$ has trace zero, hence is odd. When $\Q(\sqrt s, \sqrt t)$ is totally real, $c$ fixes
$\Q(\sqrt s, \sqrt t)$ and hence lies in $G_{\Q(\sqrt s, \sqrt t)}$. Thus $ \g_u(c) = \begin{pmatrix}
  b&0\\0&b
\end{pmatrix}$, which implies that $\g_u$ is even.

\end{proof}

\section{Galois representations attached to noncongruence cusp forms}

 \subsection{Modularity of Scholl representations when $d=1$}\label{SS:d=1}
 Let $\G \subset \t{SL}_2(\Z)$ be a \emph{noncongruence} subgroup, that is, $\G $ is a finite index subgroup of $ \t{SL}_2(\Z)$ not containing any principal congruence subgroup $\G(N)$. For any integer $\kappa \ge 2$, the space $S_\kappa(\G)$ of weight $\kappa$ cusp forms for $\G$ is finite-dimensional;  denote by $d = d(\G, \kappa)$ its dimension. Assume that the compactified modular curve $(\G \backslash\mathfrak H)^*$ is defined over $\Q$  and the cusp at infinity is $\Q$-rational.  For even $\kappa \ge 4$ and any prime $\ell$, in \cite{sch85b}  Scholl constructed an $\ell$-adic Galois representation
 $\rho_\ell: G \rightarrow \t{GL}_{2d}(\Q_\ell)$ attached to $S_\kappa(\G)$. The representations $\rho_\ell$ form a \emph{compatible system} in the sense that there exists a finite set   $S$ of finite primes
of $\Q$ such that for any prime $p \not \in S$ and primes $\ell$ and
$\ell'$ different from  $ p$, the representations $\rho _{\ell}$ and $\rho
_{\ell'}$ are unramified at $p$ and the characteristic polynomials
of $\t{Frob}_p$ under $\rho_{\ell}$ and $\rho_{\ell'}$ have coefficients in
$\Z$ and agree. Scholl also showed that all the roots of the characteristic polynomial of $\t{Frob}_p$
have the same complex absolute value $p^{(\kappa-1)/2}$ (cf. \S5.3 in \cite{sch85b}).
Scholl's results can be extended to odd weights under some
extra hypotheses (e.g., $\pm(\G \cap \G(N))= \pm (\G) \cap \pm
(\G(N))$, where $\pm : \t{SL}_2 (\Z) \to \t{PSL}_2(\Z)$ is the
projection). The readers are referred to the end of \cite{sch85b}
for more details. In this subsection we always
assume that $\rho_\ell$ does exist. Our main concern is
\smallskip

  {\it { Whether $\rho_\ell$ or its dual $\rho_\ell^{\vee}$} is automorphic
as predicted by the Langlands conjecture?}
\smallskip

If so, then {  the associated L-function coincides with the L-function of an automorphic representation of some adelic reductive group. When the reductive group is $\t{GL}_2$ over $\Q$, the representation  is called ``modular" in \S2.}
\smallskip

We provide an affirmative answer to the case $d=1$.

\begin{theorem}\label{cor:d=1}
{When $d=1$,  the  Scholl representation
$\rho_\ell$ of $\G$ is  modular.}
\end{theorem}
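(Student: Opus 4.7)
The plan is to apply the modularity criterion of Theorem~\ref{tool} to the two-dimensional Scholl representation $\rho_\ell$ at a single carefully chosen prime $\ell$, and then to propagate modularity through the compatible family via Chebotarev's theorem.

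First I would pick a prime $\ell$ satisfying (i) $\kappa - 1 \leq \ell - 2$, (ii) $\ell + 1 \nmid 2(\kappa - 1)$, and (iii) $\ell$ lies outside the finite exceptional set $S$ for the Scholl family; only finitely many primes are excluded, so such $\ell$ are abundant. By Scholl's construction \cite{sch85b}, $\rho_\ell$ is unramified outside $S \cup \{\ell\}$ and $\rho_\ell|_{G_\ell}$ is crystalline with Hodge-Tate weights $\{0,\kappa-1\}$, and writing $r := \kappa - 1$, conditions (i) and (ii) give $1 \leq r \leq \ell - 2$ and $\ell + 1 \nmid 2r$. This verifies conditions (2) and (3) of Theorem~\ref{tool}. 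For oddness, the determinant $\det \rho_\ell$ is an $\ell$-adic character unramified almost everywhere and crystalline at $\ell$ with Hodge-Tate weight $\kappa - 1$, so Corollary~\ref{character2} gives $\det \rho_\ell = \chi \epsilon_\ell^{\kappa-1}$ for a finite character $\chi$; the geometric realization of $\rho_\ell$ on a Kuga--Sato-type variety forces $\chi$ to be trivial on the complex conjugation $c$, so for even $\kappa$ one has $\det \rho_\ell(c) = (-1)^{\kappa-1} = -1$.

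For absolute irreducibility I would argue by contradiction using Scholl's Weil-type bound. Suppose $(\rho_\ell \otimes \overline{\Q}_\ell)^{\t{ss}} = \psi_1 \oplus \psi_2$. Each $\psi_i$ is unramified outside $S \cup \{\ell\}$ and crystalline at $\ell$ (sub-quotients of crystalline representations being crystalline), so Corollary~\ref{character2} yields $\psi_i = \chi_i \epsilon_\ell^{r_i}$ with $\chi_i$ of finite order. Matching Hodge-Tate weights forces $\{r_1,r_2\} = \{0, \kappa-1\}$, so at any prime $p \notin S \cup \{\ell\}$ the eigenvalues of $\rho_\ell(\t{Frob}_p)$ would have complex absolute values $1$ and $p^{\kappa-1}$, contradicting Scholl's estimate that these eigenvalues have absolute value $p^{(\kappa-1)/2}$ whenever $\kappa \geq 2$.

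With all hypotheses of Theorem~\ref{tool} satisfied, I obtain a newform $f$ of weight $\kappa$ and some level $N$, together with a prime $\lambda \mid \ell$ of $E_f$, such that $\rho_{f,\lambda} \sim \rho_\ell$ (or its dual). Since Scholl's characteristic polynomials have coefficients in $\Z$, this forces $E_f = \Q$; the two compatible systems $\{\rho_{\ell'}\}$ and $\{\rho_{f,\ell'}\}$ of two-dimensional representations of $G$ then have matching characteristic polynomials on a set of unramified Frobenii of density one, so Chebotarev's theorem upgrades this to $\rho_{\ell'} \sim \rho_{f,\ell'}$ (or its dual) for every $\ell'$, establishing modularity of the entire family. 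The main obstacle I anticipate is the oddness step: extracting $\chi(c) = 1$ from Scholl's construction is morally clear but requires some unpacking of the underlying Kuga--Sato cohomology, and the case of odd weight $\kappa$, treated by Scholl only under supplementary hypotheses, needs a separate parity analysis; the verification of the remaining hypotheses and the Chebotarev propagation are otherwise routine.
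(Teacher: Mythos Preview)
Your overall strategy coincides with the paper's: choose a large prime $\ell$, verify the hypotheses of Theorem~\ref{tool}, and propagate via compatibility and Chebotarev. The irreducibility argument via Corollary~\ref{character2} and Scholl's Weil bound is exactly what the paper does. Two points deserve attention.

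First, the oddness step is the genuine gap you yourself flag, and your determinant argument does not close it. Writing $\det\rho_\ell=\chi\epsilon_\ell^{\pm(\kappa-1)}$ is fine, but the assertion that ``the geometric realization \dots\ forces $\chi$ to be trivial on $c$'' is not a proof. The paper handles oddness directly and cleanly by Hodge theory: Scholl shows $\rho_\ell\simeq \mathrm{H}^{\kappa-1}_{\text{\'et}}(X)$ for a smooth projective $X$ with Hodge type $(\kappa-1,0)^d\oplus(0,\kappa-1)^d$. Comparison with singular and de~Rham cohomology identifies the action of complex conjugation with the map swapping $\mathrm{H}^{p,q}$ and $\mathrm{H}^{q,p}$, so on the $2d$-dimensional space exactly $d$ eigenvalues of $c$ are $+1$ and $d$ are $-1$; for $d=1$ this is oddness. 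This argument is weight-independent and avoids any analysis of $\chi(c)$.

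Second, a convention mismatch: in the paper's normalization the Hodge--Tate weights of $\rho_\ell$ are $\{0,-(\kappa-1)\}$, so Theorem~\ref{tool} is applied to $\rho_\ell^\vee$, not to $\rho_\ell$. This is harmless for the conclusion because the paper's definition of ``modular'' allows matching with $\rho$ or $\rho^\vee$, but you should be consistent. Also, the claim that $E_f=\Q$ is unnecessary for the propagation step and would require its own justification; once traces match on a density-one set of Frobenii, compatibility of both families already yields $\rho_{\ell'}\sim\rho_{f,\lambda'}$ for every $\ell'$.
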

\begin{proof} In fact, it has been known for a long time (see \cite{Sconjecture} (4.8)) that the strong form of Serre's conjecture implies the modularity of $2$-dimensional motives over $\Q$, which can be directly applied to our theorem. Here we use a slightly different method which will be useful later.

Since $\{\rho_\ell\}$ forms a compatible system, by Chebotarev Density
Theorem, it suffices to show that there exists an $\ell$ such that
$\rho^\vee_\ell$ is modular. The plan is to conclude this by applying Theorem \ref{tool}
to $\rho_\ell^\vee$ for a large $\ell$. Hence we have to show the
existence of an $\ell$ such that $\rho_\ell^\vee$ satisfies the three hypotheses
in Theorem \ref{tool}. In \cite{sch96}, Scholl proved that, { a general Scholl representation $\rho_\ell$ as above}
 is the  $\ell$-adic realization of a certain motive over
$\Q$  in the sense of
Grothendieck.\footnote[3]{In fact, Scholl's construction is valid for the smallest field $K_\Gamma$ over which the
modular curve $\Gamma \backslash\mathfrak H^*$ is defined. Here we
 restrict ourselves to the case $K_\Gamma = \Q.$} In particular, there exists a smooth projective
$\Q$-scheme $X$ such that $\rho_\ell \simeq \t{H}^{\kappa-1}_{\t{\'et}}(X)$
as $G$-representations. Furthermore, he also proved that the
Hodge type of $\t{H}^{\kappa-1}_{\t{\'et}}(X)$ is $(\kappa -1, 0)^d $ and $ (0, \kappa
-1)^d$ (Theorem 2.12 in \cite{sch85b}). Consequently, we know the
Hodge-Tate weights of $\rho_\ell$ are $0$ and $-(\kappa-1)$ and those of $\rho_\ell^\vee$ are $0$ and $\kappa-1$. Pick $\ell>2\kappa-2$
 so that $X$ has a smooth model over $\Z_{(\ell)}$, the localization of $\Z$ with respect to the prime $\ell$. Then
$\rho_\ell|_{G_\ell}$ is crystalline by Faltings' comparison
theorem (\cite{Faltings}). To complete the proof, it remains to show that $\rho_\ell$ is odd
and absolutely irreducible.

Write $\t{H}^{\kappa-1}(X)$ for $ \t{H}^{\kappa-1}(X, \Q)$,  the singular
cohomology of $X$.  It follows from the comparison theorem between
singular cohomology and \'etale cohomology that $\t{H}^{\kappa-1}(X)
\otimes\Q_\ell \simeq \t{H}^{\kappa-1}_\t{\'et}(X)$ and the isomorphism is
compatible with the action of complex conjugation $c$.
Moreover, the comparison theorem between singular cohomology and de
Rham cohomology implies $ \t{H}^{\kappa-1}(X) \otimes_\Q \C \simeq
\t{H}^{\kappa-1}_\t{dR}(X)$. By Hodge decomposition, we write
$\t{H}^{\kappa-1}_\t{dR}(X)\simeq \bigoplus\limits_{p + q = \kappa-1}
\t{H}^{p, q}(X).$ Since the Hodge type of $\t{H}^{\kappa-1}_\t{dR}(X)$ is
$(\kappa -1, 0)^d $  and $(0, \kappa -1)^d$, we have
$\t{H}^{\kappa-1}_\t{dR}(X)\simeq \t{H}^{0, \kappa-1}(X) \bigoplus \t{H}^{\kappa-1,
0}(X) .$ As $c(\t{H}^{p,q}(X))= \t{H}^{q, p}(X)$ for any $q,
p$,  we conclude that half of the eigenvalues of {$c$} on
$\t{H}^{\kappa-1}_{\t{dR}}(X)$ are $1$ and the other half $-1$. When
$d=1$, this shows that the complex conjugation on the
$2$-dimensional space $\t{H}^{\kappa-1}(X)$ has
 eigenvalues $1$ and  $-1$, hence $\rho_\ell$ is odd.

Finally we prove  that $\rho_\ell$ (hence $\rho_\ell^\vee$) is absolutely
irreducible. Suppose otherwise, namely $\rho_\ell$ is absolutely
reducible. Denote $\rho_\ell\otimes_{\Q_{\ell}} \overline \Q_\ell$ by
$\rho_\ell$ again and its semi-simplification by $\rho _\ell ^\t{ss}$. By
assumption, $\rho ^\t{ss}_\ell \simeq \chi_1 \oplus \chi _2$ for some
Galois characters $\chi_i$.  Consider $\rho_\ell$ restricted to $G_\ell$.
Since $\rho_\ell |_{G_\ell}$ is crystalline with Hodge-Tate weights $\{0,
-(\kappa-1)\}$, reducible and  $\ell >  2{\kappa-2} \ge \kappa$, from the discussion above
Lemma \ref{character}, we see that $\rho_\ell|_{G_\ell}$ is of type
I.
Consequently, we have $$\rho_\ell |_{I_\ell} \sim \begin{pmatrix}  1 & *\\
0& \epsilon^{-(\kappa-1)}_{\ell}\end{pmatrix}.$$ Hence without loss of
generality, we may assume $\chi_1 |_{I_\ell} \simeq \epsilon_\ell ^{-(\kappa-1)}$ and
$\chi_2 |_{I_\ell}\simeq 1$. By Corollary \ref{character}, $\chi_1 \simeq
\epsilon_{\ell}^{-(\kappa-1)} \psi_1$ and $\chi_2 \simeq \psi_2$ where $\psi
_i$ are characters of finite order. Pick a prime $p$ large enough such
that $\rho_\ell$ is unramified at $p$.  Then the eigenvalues of the characteristic
polynomial of $\t{Frob}_p$ under $\rho_\ell$ do not have the same complex absolute values, a contradiction.
 Thus $\rho_\ell$ is absolutely irreducible. This completes the proof of the theorem.

\end{proof}

Contained in the proof above are the following two useful results.

 \begin{cor}\label{cxconj}
Let $\rho_\ell$ be a $2d$-dimensional $\ell$-adic Scholl representation
attached to a space $S_\kappa(\G)$ of dimension $d$. Then the eigenvalues
of the complex conjugation on the space of  $\rho_\ell$ are $1$ and
$-1$, each of multiplicity $d$.
 \end{cor}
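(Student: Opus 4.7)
The proof is essentially contained in the argument already given for Theorem \ref{cor:d=1}, only the restriction $d=1$ was used at the last step to pin down the multiplicities; the same strategy works verbatim for arbitrary $d$.

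The plan is to use Scholl's motivic construction together with Hodge theory. By \cite{sch96}, the Scholl representation $\rho_\ell$ is the $\ell$-adic realization of a motive over $\Q$: there exists a smooth projective $\Q$-scheme $X$ with $\rho_\ell \simeq \t{H}^{\kappa-1}_{\t{\'et}}(X)$ as $G$-representations. The comparison theorem between \'etale and singular cohomology yields a $c$-equivariant isomorphism $\t{H}^{\kappa-1}(X)\otimes \Q_\ell \simeq \t{H}^{\kappa-1}_{\t{\'et}}(X)$, so it suffices to understand the action of complex conjugation on $\t{H}^{\kappa-1}(X)$.

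Next I would pass to de Rham cohomology: $\t{H}^{\kappa-1}(X)\otimes_\Q \C \simeq \t{H}^{\kappa-1}_{\t{dR}}(X)$. By Scholl's computation (Theorem 2.12 of \cite{sch85b}), the Hodge type is $(\kappa-1, 0)^d$ and $(0, \kappa-1)^d$, so the Hodge decomposition reduces to
\[ \t{H}^{\kappa-1}_{\t{dR}}(X) \simeq \t{H}^{\kappa-1,0}(X)\oplus \t{H}^{0,\kappa-1}(X), \]
where each summand is $d$-dimensional. Since complex conjugation on singular cohomology sends $\t{H}^{p,q}(X)$ to $\t{H}^{q,p}(X)$, the involution $c$ interchanges the two $d$-dimensional summands above.

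Finally, a short linear-algebra step finishes the argument. If $c$ swaps two $d$-dimensional subspaces $V_1$ and $V_2$ with $V_1\oplus V_2$ the total space, then for each $v \in V_1$ the vectors $v + c(v)$ and $v - c(v)$ lie in the $+1$- and $-1$-eigenspaces respectively, and the maps $v \mapsto v \pm c(v)$ are injective from $V_1$ into each eigenspace. Since the ambient space has dimension $2d$, both eigenspaces must have dimension exactly $d$. There is no real obstacle here; the only thing to be careful about is to verify that the identifications above are compatible with the action of $c$, which is the standard content of the comparison isomorphisms.
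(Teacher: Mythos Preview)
Your proposal is correct and follows exactly the argument the paper gives: the paper states this corollary as being ``contained in the proof'' of Theorem~\ref{cor:d=1}, and that proof uses precisely the chain of comparisons (Scholl's motive, \'etale--singular comparison compatible with $c$, singular--de~Rham, Hodge type $(\kappa-1,0)^d\oplus(0,\kappa-1)^d$, and $c(\t{H}^{p,q})=\t{H}^{q,p}$) that you reproduce. Your final linear-algebra paragraph just makes explicit what the paper asserts in one sentence (``half of the eigenvalues of $c$ are $1$ and the other half $-1$'').
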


\begin{cor}\label{irreducible}
Let $\rho$ be a $2$-dimensional $\ell$-adic representation of  $G$ such that $\rho|_{G_\ell}$
is crystalline with Hodge-Tate weights $\{0, -(\kappa - 1)\}$, where $\kappa \ge 2$. Suppose that $\ell > \kappa$ and
for almost all primes $p$ the eigenvalues of the characteristic polynomial of $\rho(\t{Frob}_p)$ are algebraic numbers with the same complex absolute value. Then $\rho$ is absolutely irreducible.
\end{cor}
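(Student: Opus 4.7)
The plan is to argue by contradiction, essentially extracting and reproducing the absolute irreducibility argument in the proof of Theorem \ref{cor:d=1}. Suppose $\rho$ is absolutely reducible. Pass to $\overline{\Q}_\ell$-coefficients and take the semisimplification, so that $\rho^{\t{ss}} \simeq \chi_1 \oplus \chi_2$ for two continuous characters $\chi_i : G \to \overline{\Q}_\ell^\times$. Since the hypothesis refers to $\rho(\t{Frob}_p)$ at almost all primes, $\rho$ (hence each $\chi_i$) is unramified outside a finite set of primes.

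Next I would examine $\rho|_{G_\ell}$. Because $\rho|_{G_\ell}$ is crystalline with Hodge--Tate weights $\{0, -(\kappa-1)\}$ and the bound $\ell > \kappa$ implies $\kappa - 1 \le \ell - 2$, the classification preceding Lemma \ref{character} applies. As $\rho^{\t{ss}}|_{G_\ell}$ is reducible, we are in Type I, and after relabeling I may assume
$$\chi_1|_{I_\ell} \sim \epsilon_\ell^{-(\kappa-1)}, \qquad \chi_2|_{I_\ell} \sim 1.$$
Moreover, each $\chi_i|_{G_\ell}$ is crystalline, being a subquotient of the crystalline representation $\rho|_{G_\ell}$.

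Now I would invoke Corollary \ref{character2}: each $\chi_i$ is unramified almost everywhere and crystalline at $\ell$, so there exist finite-order characters $\psi_1, \psi_2$ with
$$\chi_1 = \psi_1\, \epsilon_\ell^{-(\kappa-1)}, \qquad \chi_2 = \psi_2.$$
Choose a sufficiently large prime $p \ne \ell$ at which $\rho$ is unramified and at which the equal-absolute-value condition holds. Under the embedding $\iota_\ell$, the eigenvalues of $\rho(\t{Frob}_p)$ are $\psi_1(\t{Frob}_p)\, p^{\kappa-1}$ and $\psi_2(\t{Frob}_p)$, having complex absolute values $p^{\kappa-1}$ and $1$. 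Since $\kappa \ge 2$, these differ for large $p$, contradicting the hypothesis.

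The main obstacle is the bookkeeping around Type I reducibility and the verification that both diagonal characters in the semisimplification inherit the crystalline property at $\ell$, so that Corollary \ref{character2} applies to each of them; once this is in place, the weight comparison at Frobenius produces the contradiction automatically.
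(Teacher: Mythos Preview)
Your proposal is correct and follows essentially the same argument as the paper, which explicitly states that Corollary \ref{irreducible} is ``contained in the proof above'' of Theorem \ref{cor:d=1}. The steps---semisimplifying over $\overline{\Q}_\ell$ to get $\chi_1\oplus\chi_2$, using $\ell>\kappa$ to force Type I behavior on $I_\ell$, applying Corollary \ref{character2} to write $\chi_1=\psi_1\epsilon_\ell^{-(\kappa-1)}$ and $\chi_2=\psi_2$, and then comparing absolute values at a large unramified prime---match the paper's argument line by line; your added remark that the $\chi_i|_{G_\ell}$ are crystalline as subquotients is a helpful clarification the paper leaves implicit.
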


\subsection{{Automorphy} of certain Scholl representations with $d\ge 2$}

In the remainder of this paper, we consider the { automorphy}
 of certain Scholl representations for the case $d\ge 2$.

In this subsection, we fix  a degree $d$ cyclic extension $K/\Q$.
Let $\rho_\ell$ be a $2d$-dimensional   subrepresentation of the Scholl representation of $G$ attached to
a  space of weight $\kappa$ cusp forms for a noncongruence
subgroup $\G$. We further assume that the Hodge-Tate weights of $\rho_\ell$ are $\{0, -(\kappa - 1)\}$,
each with multiplicity $d$.  For an $\ell$-adic Galois representation $V$ of $G_K$ and a prime $\gl$ of $\O_K$ above $\ell$, denote by $\t{HT}_\gl (V)$ the set of Hodge-Tate weights of the local Galois representation $V|_{G_{K_\gl}}$ and by $\t{HT}(V)$ the union of $\t{HT}_\gl(V)$ for all primes $\gl$ above $\ell$.

\begin{prop}\label{odd}
(1) Assume that $\rho_\ell \sim
\t{Ind}_{G_K}^{G} \widetilde \rho $ for some 2-dimensional
representation $\widetilde{\rho}$ of $G_K$. Then  $\t{HT}(\widetilde \rho)=\{0, -(\kappa-1)\}$.

(2) Suppose there is a finite character $\chi$ of $G_K$ such that $\widetilde \rho \otimes \chi$
can be {extended} to a degree $2$ representation $\hat \rho$ of $G$, then $\t{HT}(\hat \rho)=\{0, -(\kappa-1)\}$.
\end{prop}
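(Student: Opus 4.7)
The plan is to handle both parts via a local–global argument combining Mackey decomposition with basic $p$-adic Hodge theory.

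For (1), I would restrict the given equivalence $\rho_\ell \sim \t{Ind}_{G_K}^{G} \widetilde \rho$ to the decomposition group $G_\ell$. Since the double cosets $G_\ell \backslash G / G_K$ correspond bijectively to the primes $\gl$ of $\O_K$ above $\ell$, Mackey's formula gives
$$\rho_\ell|_{G_\ell} \sim \bigoplus_{\gl \mid \ell} \t{Ind}_{G_{K_\gl}}^{G_\ell}\bigl(\widetilde \rho|_{G_{K_\gl}}\bigr).$$
I would then apply the standard formula that the Hodge--Tate weights of $\t{Ind}_{G_{K_\gl}}^{G_\ell} W$, viewed as a $G_\ell$-representation, are the disjoint union over embeddings $\sigma : K_\gl \hookrightarrow \overline\Q_\ell$ of the Hodge--Tate weights of $W$ taken through $\sigma$. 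Matching the two sides, the LHS contributes weights drawn from $\{0, -(\kappa-1)\}$ with total multiplicity $2d$, while the RHS contributes a total of $2\sum_{\gl\mid\ell}[K_\gl : \Q_\ell] = 2d$ weights drawn from $\bigcup_\gl \t{HT}_\gl(\widetilde\rho)$. It follows that $\t{HT}_\gl(\widetilde\rho) \subseteq \{0, -(\kappa-1)\}$ for every $\gl$, and both values must appear somewhere since each has multiplicity $d > 0$ on the LHS. This gives $\t{HT}(\widetilde\rho) = \{0, -(\kappa-1)\}$.

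For (2), the key observation is that $\chi$ is a finite-order character, so $\chi|_{G_{K_\gl}}$ has Hodge--Tate weight $0$ at every $\gl$. Thus twisting preserves Hodge--Tate weights: $\t{HT}_\gl(\widetilde\rho \otimes \chi) = \t{HT}_\gl(\widetilde\rho)$. Combining with $\hat\rho|_{G_K} \sim \widetilde\rho \otimes \chi$, we obtain $\t{HT}_\gl(\hat\rho|_{G_K}) = \t{HT}_\gl(\widetilde\rho)$ for each $\gl$. But $\hat\rho$ is already a representation of $G$, so each $\hat\rho|_{G_{K_\gl}}$ is obtained by restricting the single $G_\ell$-representation $\hat\rho|_{G_\ell}$ to an open subgroup, and this operation preserves the set of Hodge--Tate weights (with respect to the natural embedding $K_\gl \hookrightarrow \overline\Q_\ell$). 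Therefore all the $\t{HT}_\gl(\widetilde\rho)$ are equal, and their common value equals $\t{HT}(\hat\rho)$; by (1), this common value is $\{0, -(\kappa-1)\}$.

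The only nontrivial input is the Hodge--Tate weight formula for induced representations. This is a standard piece of $p$-adic Hodge theory (readily derived from the behaviour of $B_{HT}$ under induction, together with Shapiro's lemma), but it is the essential ingredient without which the argument does not get off the ground. Everything else is combinatorial bookkeeping at the level of multisets.
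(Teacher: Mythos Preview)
Your argument is correct and reaches the same conclusion, but it is organized differently from the paper's proof. The paper restricts $\rho_\ell$ to $G_K$ and uses the global decomposition $\rho_\ell|_{G_K}\simeq\bigoplus_{i=0}^{d-1}\widetilde\rho^{\sigma^i}$ (with $\sigma$ a generator of $\Gal(K/\Q)$); it then proves a lemma (``$\t{HT}(V)=\t{HT}(V^\sigma)$ for $V$ a $\Q_\ell$-representation of $G_E$, $E/\Q_\ell$ finite Galois'') to show that all the conjugates $\widetilde\rho^{\sigma^i}$ contribute the same Hodge--Tate multiset at each place, forcing $\t{HT}(\widetilde\rho)=\{0,-(\kappa-1)\}$. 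You instead restrict to $G_\ell$, apply Mackey to obtain a direct sum of local inductions, and invoke the Hodge--Tate weight formula for $\t{Ind}_{G_{K_\gl}}^{G_\ell}W$. These two routes are essentially dual: your ``standard formula'' for the weights of an induced representation unpacks, via Shapiro's lemma applied to $B_{\t{HT}}$, to exactly the Galois-invariance lemma the paper proves by hand (and in the $\Q_\ell$-coefficient setting here, the $\sigma$-labelled weights of $W$ all coincide, so your union over embeddings is just $[K_\gl:\Q_\ell]$ copies of $\t{HT}_\gl(\widetilde\rho)$). Your approach is cleaner if one is willing to cite the induction formula; the paper's is more self-contained. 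For part (2) both arguments are the same in substance, though your observation that the existence of $\hat\rho$ forces all the $\t{HT}_\gl(\widetilde\rho)$ to coincide (not merely their union to be $\{0,-(\kappa-1)\}$) is a pleasant bonus.
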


We begin by proving a lemma dealing with Hodge-Tate weights:
\begin{lemma}\label{HTconjugate}Let $E/\Q_\ell$ be a finite Galois extension and $V$ be
a finite dimensional Hodge-Tate representation of $G_E : = \gal
(\overline \Q_\ell/E )$  over $\Q_\ell$. Then for any $\sigma \in \gal(\overline \Q_\ell
/\Q_\ell)$, the Hodge-Tate weights of $V^\sigma$ are the same  as those
of $V$.
\end{lemma}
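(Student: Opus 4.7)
The plan is to exhibit, for each integer $j$, a $\sigma$-semilinear bijection
$$T_j\colon V\otimes_{\Q_\ell}\C_\ell(j)\longrightarrow V^\sigma\otimes_{\Q_\ell}\C_\ell(j)$$
that conjugates the $G_E$-action by $\sigma$, and then to read the Hodge-Tate multiplicities of $V$ and of $V^\sigma$ off the $E$-dimensions of the corresponding $G_E$-invariants, which $T_j$ will match.

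First I would record that because $E/\Q_\ell$ is Galois, $\sigma(E)=E$ and hence $\sigma G_E\sigma^{-1}=G_E$, so $V^\sigma$ (taken with action $g\mapsto\rho(\sigma^{-1}g\sigma)$) is genuinely a representation of $G_E$.  I would also extend $\sigma$ continuously to an automorphism of $\C_\ell$, still denoted $\sigma$.

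The next step is to define $T_j(v\otimes x):=v\otimes\sigma(x)$; that this is a $\sigma$-semilinear bijection over $\C_\ell$ is immediate.  The heart of the argument is the verification that
$$T_j\circ\rho(g)\;=\;\rho^\sigma(\sigma g\sigma^{-1})\circ T_j\qquad\text{for all }g\in G_E,$$
which uses crucially that the cyclotomic character $\epsilon$ takes values in $\Z_\ell^\times\subset\Q_\ell$ and is therefore pointwise fixed by $\sigma$, so the Tate twist $\C_\ell(j)$ passes through $T_j$ cleanly.  Since $g\mapsto\sigma g\sigma^{-1}$ is an automorphism of $G_E$, this intertwining forces $T_j$ to restrict to a $\sigma|_E$-semilinear bijection between $(V\otimes_{\Q_\ell}\C_\ell(j))^{G_E}$ and $(V^\sigma\otimes_{\Q_\ell}\C_\ell(j))^{G_E}$.

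To finish, I would invoke the standard facts that $\C_\ell^{G_E}=E$ (Tate), that a semilinear bijection of $E$-vector spaces preserves $E$-dimension, and that $\dim_E(V\otimes_{\Q_\ell}\C_\ell(j))^{G_E}$ equals the multiplicity of $-j$ as a Hodge-Tate weight of $V$, with $V$ being Hodge-Tate precisely when these dimensions sum to $\dim_{\Q_\ell}V$.  These together imply that $V^\sigma$ is Hodge-Tate with the same weights and multiplicities as $V$.  The only genuine obstacle is the intertwining identity, but once the $\sigma$-invariance of $\epsilon$ is recorded, it reduces to unwinding the definitions of the twisted actions on both sides.
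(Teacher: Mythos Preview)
Your proof is correct and takes essentially the same approach as the paper: both define the map $v\otimes a\mapsto v\otimes\sigma(a)$ and observe it gives a $\sigma|_E$-semilinear (hence $\Q_\ell$-linear) bijection on $G_E$-invariants, whence equality of $E$-dimensions. The only cosmetic difference is that the paper packages things via the filtered pieces $\mathrm{Fil}^i B_{\mathrm{HT}}$ of the Hodge--Tate ring, while you work with the individual Tate twists $\C_\ell(j)$; these are equivalent formulations of the same computation.
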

\begin{proof} Set  $\t{Fil}^iD = (V \otimes_{\Q_\ell} \t{Fil}^iB_{\t{HT}})^{G_E}$ and $\t{Fil}^i D^\sigma = (V^\sigma  \otimes_{\Q_\ell}
\t{Fil}^i B _{\t{HT}})^{G_E}$,  where $B_{\t{HT}}= \bigoplus \limits_{m \in \Z} \C_p (m) $ is the \emph{Hodge-Tate ring}. One can find a construction and discussion of $B_{\t{HT}}$ in \cite{Brian} (see Definition 2.3.6).  Both of them are $E$-vector spaces. It suffices to show that
they have the same dimension over  $E$. For any $\sum_j v_j \otimes a_j
\in \t{Fil}^i D$, where $v_j \in V$ and $a_j \in \t{Fil}^i
B_{\t{HT}}$, it can be easily checked that $\sum_j v_j \otimes
\sigma (a_j) \in \t{Fil}^iD^\sigma$. Hence $\sigma$ induces a
$\Q_\ell$-linear isomorphism between $\t{Fil}^iD$ and $\t{Fil}^i
D^\sigma$. Thus $\t{Fil}^i D $ and $\t{Fil}^i D^\sigma$ have the
same $E$-dimension.
\end{proof}

\begin{proof}[Proof of Proposition \ref{odd}]
Since Hodge-Tate weights are stable when restricted {to} a Galois
subgroup of finite index, the representations $\hat \rho$,
$\widetilde \rho$, and $\widetilde \rho \otimes \chi$ have the same
Hodge-Tate weights. More precisely, suppose that  $V$ is an $\ell$-adic Galois representation of $G$ and $F$ is a finite extension of $\Q$. If $V|_{G_\ell}$ is Hodge-Tate then $\t{HT}_\ell(V|_{G_\ell})= \t{HT}_\gl (V|_{G_{F_\gl}})$ for each prime $\gl$ of $\O_K$ above $\ell$.
Now it suffices to show that $\t{HT}(\widetilde \rho)=  \{0, -(\kappa-1)\}$.
Select $\sigma\in G$ such that its image in $\gal(K/\Q)$ is a
generator.  Note that $\rho_\ell|_{G_K} \simeq
\bigoplus\limits_{i=0}^{d -1}\widetilde \rho^{\sigma^i}$. We claim
that $\t{HT}(\widetilde \rho^{\sigma ^i})$ are the same for
all $i$. If so, since $\rho_\ell$ has Hodge-Tate weights $\{0,
-(\kappa-1)\}$ and each weight appears with multiplicity $d$,  we get
that $\t{HT}(\widetilde \rho)=\{0, -(\kappa-1)\}$.  To
prove the claim, it suffices to show that for any prime $\mathfrak
l$ of $\O_K$ we have $\t{HT}_\gl(\widetilde \rho)= \t{HT}_{\sigma ^i (\gl)}(\widetilde \rho ^{\sigma
^i})$. We should be careful that we can not directly use Lemma \ref{HTconjugate} here because it is not clear that $\sigma^i(\gl) = \gl.$

Observe that $\widetilde
\rho|_{G_{K_\gl}}$ is isomorphic to $\widetilde \rho^{\sigma^i}$
restricted to $\sigma ^i G_{K_\gl}\sigma^{-i}$. Let $\varpi$ be the
maximal ideal  of $\O_{\overline \Q}$ above $\gl$ which gives rise to the decomposition group
 $G_{K_\gl}$. Then there exists $\tau \in G$ such that
$G_{K_{\sigma^i (\gl)}} = \tau G_{K_\gl}\tau^{-1}$. Note that
$\sigma^i(\gl) = \tau (\gl) $, so there exists $\lambda \in G_K$
such that $\lambda \sigma^i  (\varpi)= \tau (\varpi)$. Since
$\lambda$ is in $G_K$, it is easy to see that $\widetilde
\rho^{\sigma^i}$ restricted to $\sigma ^i G_{K_\gl}\sigma^{-i}$ is
isomorphic to $\widetilde\rho ^{\lambda \sigma ^i}$ restricted to $\lambda
\sigma^i G_{K_\gl}(\lambda \sigma ^i)^{-1}$ resulting from the isomorphism
$\sigma ^i G_{K_\gl}\sigma^{-i} \simeq \lambda \sigma^i G_{K_\gl}(\lambda
\sigma ^i)^{-1} $ given by conjugation. So without loss of generality, we
may assume that $\sigma^i (\varpi) = \tau (\varpi).$ Thus $\sigma^i
\tau ^{-1} $ is in $ G_\ell$ but may not be in $G_{K_\gl}$.

Identify $\sigma^i G_{K_\gl} \sigma ^{-i}$  with
$G_{K_{\sigma^i(\gl)}}= \tau G_{K_\gl} \tau^{-1}$  via conjugation by
$\sigma^i \tau ^{-1}$. The  identity map $\t{Id}:\widetilde\rho ^{\sigma^i}
\to\widetilde \rho^{\tau} $ is  an isomorphism of $G_K$-modules.
Now we need to show that
$\widetilde\rho ^{ \tau }$ and $\widetilde\rho ^{\sigma ^i}$ have the same Hodge-Tate
weights  on $G_{K_{\sigma ^i (\gl)}}$.  This follows from  Lemma
\ref{HTconjugate} and the fact that  $\widetilde\rho ^{\tau} =
(\widetilde\rho^{\sigma ^i}) ^{\tau \sigma ^{-i}  }$.
\end{proof}

\medskip

We are ready to prove the {automorphy} of certain Scholl representations. First suppose that the Scholl representation $\rho_\ell$ has degree $4$ and admits QM over a quadratic field $\Q(\sqrt s)$. By Theorem \ref{quaternion}, (2), we have, over $\Q_\ell(\sqrt {-1})$, $$\rho_\ell = \sigma \oplus (\sigma \otimes \theta) = \sigma \otimes
{\rm{Ind}^G_{G_\Q(\sqrt s)} 1} = \rm{Ind}^G_{G_\Q(\sqrt s)} \sigma|_{G_\Q(\sqrt s)}$$
for a degree $2$ representation $\sigma$ of $G$ and a quadratic character $\theta$ of $G$ with kernel $G_{\Q(\sqrt s)}$.
So $\sigma$ and $\sigma \otimes \theta$ restricted to the decomposition group $G_\ell$ are crystalline, and they have the same
Hodge-Tate weights $\{0, -(1 - \kappa)\}$ by Proposition \ref{odd}. We conclude their absolute irreducibility from Corollary \ref{irreducible}. Proposition \ref{cxconj} says that $\rho_\ell$ has trace zero at the complex conjugation, hence by Proposition \ref{conjugation}, both $\sigma$ and $\sigma \otimes \theta$ are odd. It then follows from Theorem \ref{tool} that $\sigma$ and $\sigma \otimes \theta$ are modular for large $\ell$, and thus for all $\ell$ by compatibility. We record this in

\begin{theorem}\label{QMoverquadratic}
Suppose that the Scholl representation $\rho_\ell$ has degree $4$ and admits QM over a quadratic field $\Q(\sqrt s)$. Then over $\Q_\ell (\sqrt {-1})$ it decomposes as a direct sum of two modular representations $\sigma$ and $\sigma \otimes \theta$.  Here $\theta$ is the quadratic character of $\gal(\Q(\sqrt s)/\Q)$.
\end{theorem}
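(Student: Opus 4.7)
The plan is to invoke Theorem~\ref{quaternion}(2) to obtain, over $E=\Q_\ell(\sqrt{-1})$, a decomposition $\rho_\ell\otimes_{\Q_\ell} E \simeq \sigma \oplus (\sigma\otimes\theta)$, and then verify one by one that each summand meets the hypotheses of the modularity criterion (Theorem~\ref{tool}). Since $\rho_\ell$ sits in a Scholl compatible system, it is unramified outside a finite set of primes and crystalline at $\ell$ whenever $\ell$ is large enough; both properties descend to the $E$-direct summands $\sigma$ and $\sigma\otimes\theta$.

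I would then verify the three hypotheses of Theorem~\ref{tool} for $\sigma$ (and hence by a twist for $\sigma\otimes\theta$) at a suitably large~$\ell$. For absolute irreducibility, Proposition~\ref{odd} gives that $\sigma$ has Hodge--Tate weights $\{0,-(\kappa-1)\}$, while Scholl's purity bound forces every Frobenius eigenvalue of $\sigma$ to have complex absolute value $p^{(\kappa-1)/2}$; Corollary~\ref{irreducible} then applies once $\ell>\kappa$. For oddness, Corollary~\ref{cxconj} says that the complex conjugation $c$ acts on the four-dimensional Scholl representation with trace zero, and Proposition~\ref{conjugation}(c) in the QM-quadratic case forces $\Tr\sigma(c)=\Tr(\sigma\otimes\theta)(c)=0$, so both summands are odd. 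For the crystalline/weight condition, I choose $\ell$ so large that $\ell>\kappa$ and $\ell+1\nmid 2(\kappa-1)$; then $\sigma^\vee|_{G_\ell}$ is crystalline with Hodge--Tate weights $\{0,\kappa-1\}$ lying inside the range demanded by Theorem~\ref{tool}.

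Once these conditions hold for one such $\ell$, Theorem~\ref{tool} produces a weight-$\kappa$ newform whose attached Galois representation is equivalent to $\sigma^\vee$, so $\sigma$ is modular in the sense of \S\ref{moduarlity}; the identical argument (or, equivalently, swapping the chosen square root of $-1$) settles the other summand. Compatibility of the family $\{\rho_\ell\}$ then pins down the Frobenius characteristic polynomials of $\sigma$ and $\sigma\otimes\theta$ at unramified primes by rational integers independent of $\ell$, so modularity at a single prime propagates to every $\ell$ via Chebotarev density.

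The main obstacle is administrative rather than conceptual: Theorem~\ref{quaternion}(2) forces us to extend scalars to $E=\Q_\ell(\sqrt{-1})$, and the summand $\sigma$ is defined only up to the choice of $\sqrt{-1}$. Fortunately Theorem~\ref{tool} is stated over an arbitrary finite extension of $\Q_\ell$, and the three hypotheses we verify---oddness, absolute irreducibility, and the crystalline condition with allowed weights---are all numerical invariants insensitive to the base change and to the choice of $G$-invariant lattice inside $\sigma$.
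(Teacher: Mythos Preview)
Your proposal is correct and follows essentially the same route as the paper: invoke Theorem~\ref{quaternion}(2) for the decomposition, use Proposition~\ref{odd} and Corollary~\ref{irreducible} for the Hodge--Tate weights and absolute irreducibility, combine Corollary~\ref{cxconj} with Proposition~\ref{conjugation}(c) for oddness, apply Theorem~\ref{tool} at a suitable large $\ell$, and propagate by compatibility. The only point you leave implicit (as does the paper) is that Proposition~\ref{conjugation} requires $-1\notin(\Q_\ell^{\times})^2$, so the ``suitably large $\ell$'' should in particular be chosen with $\ell\equiv 3\pmod 4$.
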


Next we consider a more general situation.

\begin{theorem}\label{thm:3.2.1}
Let $K$ be a degree $d$ cyclic extension of $\Q$
and $\rho_\ell$ be a $2d$-dimensional
$\ell$-adic subrepresentation of a Scholl representation of $G$ as above. Assume that
 \begin{itemize}

 \item[(a)] $\rho_\ell$ is induced from a 2-dimensional representation
  $\widetilde{\rho}$ of $G_K$;
 \item[(b)] There exists a character $\chi$ of
  $G_K$  of  finite order such that $\widetilde{\rho}\otimes \chi$ can be extended to an $\ell$-adic representation $\hat \rho$ of $G$;
\item[(c)] $\rho_\ell|_{G_\ell}$ is crystalline and $\chi$ is unramified at any prime above $\ell$.

 \end{itemize}
Then $\hat \rho$ is  absolutely irreducible.  If we further assume that
  \begin{itemize}
   \item[(d)] $K$ is unramified over $\ell$ and $\ell > 2\kappa-2$;

  \item[(e)] $\hat \rho$ is odd,
   \end{itemize}
then the dual of $\hat \rho$ is isomorphic to a modular representation $\rho_{g, \lambda}$ of $G$  attached to a weight $\kappa$ cuspidal newform $g$, and
the following relations on L-series hold:
$$L(s,\rho_\ell^{\vee})=L(s, {\widetilde{\rho}} ^{\vee}) = L(s, (\rho_{g, \lambda}|_{G_K})\otimes
\chi).$$
\end{theorem}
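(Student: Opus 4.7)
My strategy is to reduce both conclusions to the modularity lifting theorem (Theorem~\ref{tool}) applied to the two-dimensional representation $\hat\rho^{\vee}$. I would first establish that $\hat\rho$ is absolutely irreducible via Corollary~\ref{irreducible}, then verify the remaining hypotheses of Theorem~\ref{tool}, and finally read off the $L$-function identity from the induced/twisted structure.

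For the absolute irreducibility, I would apply Corollary~\ref{irreducible} to $\hat\rho$. Its three inputs are: (i) $\hat\rho|_{G_\ell}$ crystalline, (ii) Hodge--Tate weights $\{0,-(\kappa-1)\}$, (iii) a uniform complex absolute value of the Frobenius eigenvalues of $\hat\rho(\t{Frob}_p)$ at almost all $p$, together with $\ell>\kappa$. Input (ii) is exactly Proposition~\ref{odd}(2). Input (i) follows by descending the crystallinity of $\hat\rho|_{G_{K_\gl}} \simeq (\widetilde\rho\otimes\chi)|_{G_{K_\gl}}$ (which is crystalline by hypothesis (c), since $\rho_\ell|_{G_\ell}$ is crystalline and $\chi$ is unramified at $\gl$) along the unramified extension $K_\gl/\Q_\ell$ given by hypothesis (d). For (iii), whenever $p$ splits completely in $K$, $\t{Frob}_p \in G_K$ and $\hat\rho(\t{Frob}_p) = (\widetilde\rho\otimes\chi)(\t{Frob}_p)$; since $\widetilde\rho$ appears as a direct summand of $\rho_\ell|_{G_K}$, Scholl's purity produces Frobenius eigenvalues of complex absolute value $p^{(\kappa-1)/2}$, and twisting by the finite character $\chi$ preserves this. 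Chebotarev applied to the completely-split primes yields a positive-density set of such $p$, which suffices. The inequality $\ell>\kappa$ is guaranteed by $\ell>2\kappa-2$ from (d).

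Once absolute irreducibility is in hand, Theorem~\ref{tool} applies directly to $\hat\rho^{\vee}$: oddness is (e), unramifiedness almost everywhere follows from (c) and the construction of $\rho_\ell$, crystallinity with Hodge--Tate weights $\{0,\kappa-1\}$ was just verified (after dualising), and the numerical conditions $1\le \kappa-1\le \ell-2$ and $\ell+1\nmid 2(\kappa-1)$ are secured by $\ell>2\kappa-2$ in (d). The output is a weight-$\kappa$ newform $g$ with $\hat\rho^{\vee}\sim \rho_{g,\lambda}$. The $L$-function identity is then immediate: $L(s,\rho_\ell^{\vee}) = L(s,\widetilde\rho^{\vee})$ by invariance of $L$-functions under induction, and the identification $\widetilde\rho\otimes\chi \simeq \hat\rho|_{G_K}$ gives $\widetilde\rho^{\vee}\simeq (\hat\rho^{\vee}|_{G_K})\otimes\chi \simeq (\rho_{g,\lambda}|_{G_K})\otimes\chi$, so $L(s,\widetilde\rho^{\vee}) = L(s,(\rho_{g,\lambda}|_{G_K})\otimes\chi)$.

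The principal technical obstacle is the absolute irreducibility step: it requires simultaneous control of the crystalline structure on $G_\ell$, the Hodge--Tate weights, and the Scholl-pure Frobenius eigenvalues of $\hat\rho$, and thus must thread together Proposition~\ref{odd}(2), crystalline descent along the unramified extension $K_\gl/\Q_\ell$ from (d), and purity transfer via Chebotarev. Remark~\ref{rem:2.2.6} explains why one prefers to apply the classification of crystalline characters at the level of $G_\Q$ rather than $G_K$, which is precisely why the unramifiedness of $K$ at $\ell$ in (d) enters the irreducibility argument. After that hurdle, the verification of the hypotheses of Theorem~\ref{tool} and the derivation of the $L$-function identity amount to routine bookkeeping.
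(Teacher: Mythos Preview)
Your argument for modularity and the $L$-function identity matches the paper's. For the absolute irreducibility of $\hat\rho$, however, there is a genuine gap: the theorem asserts this from (a), (b), (c) \emph{alone}, but your route through Corollary~\ref{irreducible} uses (d) twice---once for the crystalline descent giving $\hat\rho|_{G_\ell}$ crystalline, and once for the inequality $\ell>\kappa$. In addition, Corollary~\ref{irreducible} asks for Frobenius purity at \emph{almost all} $p$, whereas your Chebotarev argument only produces it at the primes completely split in $K$; you would need to reopen that corollary's proof to see that a single large such prime already suffices for the contradiction.

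The paper instead argues directly. If $(\hat\rho)^{\mathrm{ss}}=\eta_1\oplus\eta_2$ with $\eta_i$ characters of $G$, then from (c) each $\eta_i|_{G_{K_\gl}}$ is crystalline, and because the $\eta_i$ are \emph{global characters of $G_\Q$} one may invoke Corollary~\ref{character2} to write $\eta_i=\mu_i\,\epsilon_\ell^{r_i}$ with $\mu_i$ of finite order and $\{r_1,r_2\}=\{0,-(\kappa-1)\}$ by Proposition~\ref{odd}. Then $\widetilde\rho^{\,\mathrm{ss}}$ is a finite twist of $(\mu_1\oplus\mu_2\,\epsilon_\ell^{-(\kappa-1)})|_{G_K}$, so the Frobenius eigenvalues of $\rho_\ell=\mathrm{Ind}_{G_K}^G\widetilde\rho$ at any single good prime have distinct complex absolute values, contradicting Scholl's purity for $\rho_\ell$. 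Working with the one-dimensional constituents rather than with $\hat\rho$ as a whole is precisely what sidesteps the need for crystalline descent of a two-dimensional representation and for $\ell>\kappa$ (cf.\ Remark~\ref{rem:2.2.6}), and drawing the contradiction from $\rho_\ell$ itself rather than from $\hat\rho$ avoids any restriction to split primes.
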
\bigskip

\begin{remark} The character $\chi$ of $G_K$
corresponds to an automorphic form $\pi_{\chi}$ of
$\t{GL}_d(\BA_{\Q})$ by base change \cite{ac89}. The above expression shows that  the
semi-simplification of $\rho^\vee _\ell$ is automorphic, arising from
the form $g \times \pi_{\chi}$ of the group $\t{GL}_2(\BA_{\Q})
\times \t{GL}_d(\BA_{\Q})$.
This is because  $$(\rho_\ell^\vee)^\t{ss} = (\t{Ind}_{G_K}
^{G} (\widetilde \rho^\vee)^\t{ss})^\t{ss}  = (\t{Ind}_{G_K}^{G} (\rho_{g, \lambda}|_{G_K}
\otimes \chi ))^\t{ss},$$ and as representations of $G$,
$\t{Ind}_{G_K}^{G} (\rho_{g, \lambda}|_{G_K} \otimes \chi )$ is
isomorphic to  $ \rho _{g, \lambda} \otimes \t{Ind}_{G_K}^G \chi$
(cf. Theorem 38.5 \cite[pp. 268]{curtis-reiner}).

When $d=2$, it follows from the work of Ramakrishnan \cite{Ramakrishnan00} that an automorphic form for
$\t{GL}_2(\BA_{\Q})
\times \t{GL}_2(\BA_{\Q})$ corresponds to an automorphic form for $\t{GL}_4(\BA_{\Q})$. In this case the $L$-function attached to the degree $4$ representation $\rho_\ell^{\vee}$ is an automorphic $L$-function for $\t{GL}_4(\BA_{\Q})$. { This fact also holds for general $d$ by base change \cite[\S3.6]{ac89}. More precisely, that $\hat \rho^\vee$ comes from an automorphic form of $\t{GL}_2$ over $\Q$ implies that $\tilde \rho^\vee$ comes from an automorphic form of $\t{GL}_2$ over $K$, which in turn implies that $\rho_\ell^\vee$ corresponds to an automorphic form of $\t{GL}_{2d}$ over $\Q$.}

\end{remark}

\begin{proof}[Proof of Theorem \ref{thm:3.2.1}]  We first show that $\hat \rho  $ is absolutely irreducible.  Suppose otherwise.
 Enlarging the field of coefficients if necessary, we may assume that $
(\hat \rho)^\t{ss}  = \eta _1 \oplus \eta_2$ for some characters $\eta_1$ and $\eta_2$ of $G$.
Note that $(\hat \rho |_{G_K})^\t{ss} = (\widetilde \rho \otimes \chi)^\t{ss}$.
Since $\rho_\ell$ is crystalline at $\ell$,
$\widetilde \rho$ is crystalline at any prime above $\ell$, which in turn implies that $\widetilde \rho
\otimes \chi$ is crystalline at any prime above $\ell$ because
$\chi$ is unramified at any prime dividing $\ell$.
Consequently,
each $\eta_i$ is crystalline at $\ell$. Hence $\eta_i = \mu_i \epsilon _\ell^{\kappa_i}$ with $\mu_i$ a finite character  and $\kappa_i$ the Hodge-Tate weight of $\eta_i$ by Corollary \ref{character2}. Since $ \hat \rho$ has Hodge-Tate weights $0$ and $-(\kappa-1)$ by Proposition \ref{odd}, we see that each
$\eta_i$ has Hodge-Tate weight either $0$ or $-(\kappa-1)$. Therefore $\widetilde \rho^\t{ss} =
(\mu_1 \epsilon_1 \oplus \mu_2 \epsilon_\ell ^{-(\kappa-1)})|_{G_K}$. This contradicts the fact
that the roots of the characteristic polynomial of $\t{Frob}_p$ under the representation
$\rho_\ell =\t{Ind}^G_{G_K}\widetilde \rho$  have the same complex
absolute value. So $\hat \rho$ cannot be absolutely reducible.

Now we prove the remaining assertions under the additional assumptions.  As shown above, $\hat \rho$ is absolutely irreducible and has Hodge-Tate
weights $\{0, -(\kappa-1)\}$. Further, since $K $ is
unramified above $\ell$, $\hat \rho|_{G_\ell}$ is crystalline.  Finally, since $\hat \rho$ is odd by assumption, we conclude from Theorem
\ref{tool}  that $\hat \rho^\vee$ comes from a  weight $\kappa$ cuspidal newform $g$ as described
  in Theorem \ref{Deligne}. Hence $\widetilde \rho = \rho^\vee_{g, \lambda}
|_{G_K} \otimes \chi ^{-1}$ and the relations on L-functions
hold. \end{proof}

\begin{remark}\label{rem:3.2.4}
Since we have a compatible family of Scholl representations $\rho_\ell$ constructed from geometry,
there always exists a prime $\ell$ large enough such that the conditions (c) and (d) are satisfied.
However, the oddness of $\hat \rho$ is not automatic.
\end{remark}

\subsection{Degree $4$ Scholl representations with QM and Atkin-Swinnerton-Dyer conjecture} Hecke operators played a fundamental role in the arithmetic of
congruence modular forms.  It was shown in \cite{tho89, berger94,
sch97} that similarly defined Hecke operators yielded little
information on genuine noncongruence forms. When the modular curve of a noncongruence
subgroup $\G$ has a model over $\Q$, Atkin and Swinnerton-Dyer in
\cite{a-sd} predicted a ``$p$-adic'' Hecke theory in terms of
$3$-term congruence relations on Fourier coefficients of  weight ${\kappa}$ cusp forms in $S_{\kappa}(\G)$ as follows. Suppose the cusp at $\infty$ is a $\Q$-rational point with
cusp width $\mu$. The $d$-dimensional
space $S_{\kappa}(\G)$ admits a basis whose Fourier coefficients are in a finite extension of $\Q$; moreover, there is an integer $M$ such that for each prime $p \nmid M$, these Fourier coefficients are $p$-adically integral, that is, integral over $\Z_p$. Atkin and Swinnerton-Dyer
conjectured in \cite{a-sd} that, given ${\kappa} \ge 2$ even, for almost all primes $p$,
 $S_{\kappa}(\G)$ has a $p$-adically integral basis $f_j(z) = \sum_{n \ge 1} a_j(n) q^{n/\mu}$, where $q = e^{2 \pi i z}$ and  $1 \le
j \le d$, depending on $p$,  whose Fourier coefficients satisfy the
$3$-term congruences
\begin{equation}\label{eq:ASD}
 a_j(pn)-A_j(p)a_j(n)+ p^{({\kappa}-1)/2} a_j(n/p)\equiv 0 \mod p^
  {({\kappa}-1)(1+\ord_p n)} \quad \forall n\ge 1
\end{equation}
for some algebraic integers $A_j(p)$  satisfying the Ramanujan bound. The congruence (\ref{eq:ASD}) is carefully explained in \cite{a-sd} and \cite{sch85b}. It can be summed up as the quotient
$(a_j(pn)-A_j(p)a_j(n)+ p^{({\kappa}-1)/2} a_j(n/p))/p^{({\kappa}-1)(1+\ord_p n)}$ { is integral over $\Z_p$, as in \cite{all05}. In what follows, this is the interpretation of ASD congruences we shall adopt.}
  The ${\kappa}=2$ case of this conjecture was
verified in \cite{a-sd} for $d=1$ and  discussed in \cite{car71} using formal group language.

Scholl showed in \cite{sch85b} that, for ${\kappa} \ge 4$ even and
 $p$ large enough, all $p$-adically integral cusp forms $f = \sum_{n \ge 1}
a(n) q^{n/\mu}$ in
$S_{\kappa}(\G)$  satisfy $(2d+1)$-term congruences
\begin{equation*}
  a(p^dn)+C_1(p)a(p^{d-1}n)+ \cdots + C_{2d}(p)a(n/p^d) \equiv 0 \mod
  p^{({\kappa}-1)(1+\ord_p n)} \quad \forall n\ge 1,
\end{equation*}
in which $H_p(x):= x^{2d} + C_1(p)x^{2d-1} + \cdots + C_{2d}(p) \in \Z[x]$ is the
characteristic polynomial of the associated Scholl representation $\rho_{\ell}$  at $\text{Frob}_p$.
Similar statement holds for ${\kappa}$ odd. Hence the ASD conjecture holds for $d=1$. For arbitrary $d$, proving the ASD conjecture amounts to factoring $H_p(x)$ as a product
of $d$ quadratic polynomials $x^2 - A_j(p)x + B_j(p)$, $1 \le j \le d$, and finding a basis $f_j =\sum_{n \ge 1} a_j(n) q^{n/\mu}$ of $S_{\kappa}(\G)$ with  $p$-adically integral coefficients such that the $3$-term congruences
\begin{equation*}
  a_j(pn)-A_j(p)a_j(n)+B_j(p)a_j(n/p) \equiv 0 \mod p^
  {({\kappa}-1)(1+\ord_p n)} \quad \forall n\ge 1
\end{equation*} hold for $1 \le j \le d$.
\medskip

Assume that $S_{\kappa}(\G)$ has a $2$-dimensional subspace $S$ to which one can associate a family of compatible degree-$4$ Scholl representations $\{\rho_\ell\}$ of $G$  over $\Q_\ell$. Suppose  that there is a finite extension $F$ of $\Q$ and there are operators $J_s$ and $J_t$ acting on
 extensions over  fields in $\Q_\ell \otimes F$ of the representation spaces of all $\rho_\ell$ that satisfy the QM conditions in Definition \ref{QM} over a biquadratic $\Q(\sqrt s, \sqrt t)$. Then, as stated in Corollary \ref{factorization}, for almost all primes $p$ they give rise to a factorization of  the characteristic polynomial $H_p(x)$ of $\rho_\ell({ \t{Frob}}_p)$ into a product of two quadratic polynomials $H_{\gp_{\pm}, u}(x) = x^2 - A_{\pm, u}(p)x + B_{u}(p)$ by choosing a $u \in \{s, t, st\}$ such that $p$ splits  into two places $\gp_{\pm}$ in $\Q(\sqrt u)$. Recall that $H_{\gp_{\pm}, u}(x)$ are the respective characteristic polynomial of $\sigma_u({ \t{Frob}}_{\gp_+})$ and $\sigma_u({ \t{Frob}}_{\gp_-}) = (\sigma_u \otimes \delta_u)({ \t{Frob}}_{\gp_+})$, where $\sigma_u$ and $\delta_u$ are as in Theorem \ref{quaternion}.

For $p \nmid M$ and unramified in $\Q(\sqrt s, \sqrt t)$, let $V_p$ be the $4$-dimensional $p$-adic Scholl space containing the space $S_p$ of  $p$-adically integral forms in $S$ as a  subspace and its dual $(S_p)^\vee$ as a quotient (cf. \cite{sch85b}). On $V_p$ there is the action of $F_p$, the Frobenius at $p$. Scholl proved in \cite{sch85b} that, for $\ell \ne p$ and $\rho_\ell$ unramified at $p$, $\rho_\ell({ \t{Frob}}_p)$ and $F_p$ have the same characteristic polynomial.

 Assume that  the operators $J_s$ and $J_t$ also act on $S$, preserving forms whose Fourier coefficients are in a suitable number field $K$ which are $p$-adically integral
for almost all $p$. For such a $p$, their actions on $S_p$ extend to $V_p$, again denoted by $J_s$ and $J_t$. Suppose that they satisfy
the condition (a) in Definition \ref{QM} and

(b)' for $u \in \{s, t, st \}$, the operator $J_u$ commutes with $F_p$ for $p$ split in $\Q(\sqrt u)$, and $J_uF_p=-F_pJ_u$ for $p$ inert in $\Q(\sqrt u)$.  Here $J_{st} = J_sJ_t$.

\noindent We say that $V_p$ admits QM over $\Q(\sqrt s, \sqrt t)$ for simplicity.
The same argument as in the proof of Theorem \ref{quaternion} shows that over  any field in $\Q_p \otimes K(\sqrt {-1})$, for every $u \in \{s, t, st \}$, each $\pm i$-eigenspace $V_{p, \pm, u}$ of $J_u$ is $2$-dimensional, containing a form in $S_p$,
and $J_v$, where $v \in \{s, t, st\}$ and $ v \ne u$, permutes the two eigenspaces.
If $p$ splits completely in $\Q(\sqrt s, \sqrt t)$, then  $V_{p, \pm, u}$ are invariant under $F_p$ and $J_v$ commutes with  $F_p$. Therefore the actions of $F_p$ on $V_{p, \pm, u}$ are isomorphic and hence have the same characteristic polynomial, equal to $H_{\gp_{\pm}, u}(x)$.
If $p$ does not split in $\Q(\sqrt s, \sqrt t)$, choose an element $u \in \{s, t, st\}$ such that $p$ splits in $\Q(\sqrt u)$. Then $V_{p, \pm, u}$ are invariant under $F_p$. On the $\ell$-adic side, the representation space $W_\ell$ of $\rho_\ell$ decomposes similarly into two $\pm i$-eigenspaces $W_{\ell,p, \pm, u}$ of $J_u$, each is invariant under $\rho_\ell({ \t{Frob}}_p)$. Moreover, viewed as $J_u\rho_\ell({ \t{Frob}}_p)$ invariant spaces, they are intertwined by $J_v$ because of condition (b) in Definition \ref{QM}. Therefore
$J_u\rho_\ell({ \t{Frob}}_p)$ has the same characteristic polynomial on $W_{\ell,p, \pm, u}$.

A normalizer of $\G$ acts on the space $S$ and hence $V_p$. Further, its action on the modular curve $X_\G$ induces an action on the $\ell$-adic cohomology group, which is the representation space of $\rho_\ell$. When $J_s$ and $J_t$  arise from normalizers of $\G$, or  algebraic real linear combinations of such, we conclude from the argument in \cite[Prop. 4.4 and proof]{sch85b} that $J_uF_p$ on $V_p$ and $J_u \rho_\ell({ \t{Frob}}_p)$ on
$W_\ell$ have the same characteristic polynomials.  Combined with the fact that $\rho_\ell({ \t{Frob}}_p)$ and $F_p$ have the same characteristic polynomials, we get, by using the argument of Lemma 8 of \cite{all05}, that when $p$ splits in $\Q(\sqrt u)$,
 $F_p$ on $V_{p, \pm, u}$ and $\rho_\ell({ \t{Frob}}_p)$ on $W_{\ell,p, \pm, u}$ have the same characteristic polynomials $H_{\gp_{\pm}, u}(x) = x^2 - A_{\pm,u}(p)x+B_{u}(p)$.

As observed above, for almost all primes $p$, $V_{p, \pm, u}$
contain a nonzero $f_{\pm, u} = \sum_{n \ge 1} a_{\pm, u}(n) q^{n/\mu} \in S$ with $p$-adically integral Fourier coefficients.  They form a basis of $S$. It follows from  \cite{sch85b, L5} that the ASD congruences hold, namely,
\begin{eqnarray}\label{eq:ASD2}
 a_{\pm, u}(pn)-A_{\pm, u}(p)a_{\pm, u}(n)+B_{ u}(p)a_{\pm, u}(n/p) \equiv 0 \mod p^
  {({\kappa}-1)(1+\ord_p n)} \quad \forall n\ge 1.
\end{eqnarray}
\smallskip

Now we turn to the modularity of the degree $4$ representations $\rho_\ell$ with QM over $\Q(\sqrt s, \sqrt t)$ as above.  Theorem \ref{thm:exist-chi_u} says for each $u \in \{s, t, st\}$, there is a finite $\ell$-adic characters $\chi_u$ of $G_{\Q(\sqrt u)}$ such that $\sigma_u \otimes \chi_u$ { extends to a representation $\eta_u$ of $G$.}  We know from Corollary 4.1.2 that all $\rho_\ell$ have trace $0$ at the complex conjugation. If there is a large $\ell$ such that the representation $\rho_\ell$ is over a field
not containing a square root of $-1$, then by combining Propositions \ref{conjugation} and \ref{oddness}, we get that $\eta_u$ is odd { for this} and hence all $\ell$ because of compatibility. We then conclude from Theorem \ref{thm:3.2.1} that the dual of $\rho_\ell$ is automorphic.
In this case the dual of $\sigma_u$ corresponds to an automorphic form $h_u$ of $\t{GL}_2$ over  $\Q(\sqrt u)$. As $\sigma_s$, $\sigma_t$, $\sigma_{st}$ have the same restrictions to $G_{\Q(\sqrt s, \sqrt t)}$, the
 three automorphic forms $h_u$ of $\t{GL}_2$ over  $\Q(\sqrt u)$ for $u \in \{s, t, st\}$ base change to the same automorphic form $h$ of $\t{GL}_2 $ over $\Q(\sqrt s, \sqrt t)$.

We summarize the above discussions in the theorems below.

 \begin{theorem}\label{QMdescent} Let $\{\rho_\ell\}$ be a family of compatible degree-$4$ Scholl representations of $G$ associated to a $2$-dimensional subspace $S$ of $S_{\kappa}(\G)$ for a noncongruence subgroup $\G$. Suppose  that
there exists a finite real extension $F$ of $\Q$ and   operators $J_s$ and $J_t$ acting on the spaces of $\rho_\ell$ extended over fields in $\Q_\ell \otimes F$ satisfying the QM conditions in Definition \ref{QM} over a biquadratic field $\Q(\sqrt s, \sqrt t)$.
Then for each $u \in \{s, t, st\}$, there is an automorphic form $h_u$ of $\t{GL}_2(\mathbb A_{\Q(\sqrt u)})$ such that the following $L$-functions have the same local factors over all primes of $\mathbb Z$:
$$ L(s,  \rho_\ell^\vee) = L(s, h_s) = L(s, h_t) = L(s, h_{st}).$$
\end{theorem}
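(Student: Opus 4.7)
The plan is to deduce the theorem by applying Theorem \ref{thm:3.2.1} separately for each $u \in \{s, t, st\}$ to the representation $\eta_u$ and the character $\chi_u$ supplied by Theorem \ref{thm:exist-chi_u}, then read off the $L$-function identities from the induction relation in Theorem \ref{quaternion}(1).

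First, I would unpack the input data. For each $u \in \{s,t,st\}$, Theorem \ref{quaternion}(1) gives $\rho_\ell \sim \mathrm{Ind}_{G_{\Q(\sqrt u)}}^G \sigma_u$, and Theorem \ref{thm:exist-chi_u} produces a finite character $\chi_u$ of $G_{\Q(\sqrt u)}$ such that $\sigma_u \otimes \chi_u$ extends to a $2$-dimensional representation $\eta_u$ of $G$. Since Scholl's construction provides a compatible system and $\chi_u$ has finite image, I would pass to a prime $\ell$ that is large enough to ensure: (c) $\rho_\ell|_{G_\ell}$ is crystalline (automatic from the geometric origin for $\ell$ not in the bad set) and $\chi_u$ is unramified at every place of $\Q(\sqrt u)$ above $\ell$; and (d) $\ell > 2\kappa - 2$ and $\Q(\sqrt u)$ is unramified at $\ell$. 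This verifies hypotheses (a)--(d) of Theorem \ref{thm:3.2.1}.

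The hard part is hypothesis (e), the oddness of $\eta_u$. For this I use the cohomological input from Corollary \ref{cxconj}, which yields $\mathrm{Tr}\,\rho_\ell(c) = 0$ at the complex conjugation $c$. Since $F$ is a real number field, infinitely many rational primes $\ell$ (for example, any $\ell \equiv 3 \pmod 4$ that splits in $F$) admit a place whose completion is a subfield of $\Q_\ell$-algebra not containing $\sqrt{-1}$, so I may realize some $\rho_\ell$ on a vector space over a field $F'$ with $\sqrt{-1}\notin F'$. For such $\ell$, Proposition \ref{conjugation}(b) combined with the vanishing of $\mathrm{Tr}\,\rho_\ell(c)$ forces $\sigma_u$ to be odd at every real place of $\Q(\sqrt u)$ for positive $u \in \{s,t,st\}$; then Proposition \ref{oddness}(a) upgrades this to the oddness of $\eta_u$. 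Since $\{\eta_u\}_\ell$ forms a compatible system (its trace is determined by those of $\rho_\ell$ and $\chi_u$) and oddness is detected by the trace at $c$, oddness propagates to every $\ell$. The main obstacle here is ensuring such an $\ell$ exists with all required local conditions simultaneously; this is the reason for the realness hypothesis on $F$ and the passage to large $\ell$.

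With hypotheses (a)--(e) of Theorem \ref{thm:3.2.1} verified, that theorem delivers a weight-$\kappa$ congruence newform $g_u$ with $\eta_u^\vee \sim \rho_{g_u,\lambda}$ and the identity $L(s, \rho_\ell^\vee) = L(s, (\rho_{g_u,\lambda}|_{G_{\Q(\sqrt u)}}) \otimes \chi_u^{-1})$. I then set $h_u$ to be the cuspidal automorphic representation of $\mathrm{GL}_2(\mathbb{A}_{\Q(\sqrt u)})$ corresponding to the base change of $g_u$ to $\Q(\sqrt u)$ twisted by the idele class character attached to $\chi_u^{-1}$, which exists by cyclic base change \cite{ac89} since $\Q(\sqrt u)/\Q$ is quadratic. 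Then $L(s, h_u) = L(s, \sigma_u^\vee)$, and Frobenius reciprocity applied to $\rho_\ell \sim \mathrm{Ind}_{G_{\Q(\sqrt u)}}^G \sigma_u$ yields $L(s, \rho_\ell^\vee) = L(s, \sigma_u^\vee) = L(s, h_u)$ for each of $u = s, t, st$, giving the common value and completing the proof.
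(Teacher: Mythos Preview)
Your proof follows essentially the same strategy as the paper's: decompose via Theorem \ref{quaternion}, invoke Theorem \ref{thm:exist-chi_u} to get $\eta_u$ and $\chi_u$, verify the hypotheses of Theorem \ref{thm:3.2.1} (with oddness being the substantive one), and then read off $h_u$ as the base change of the resulting newform twisted by the appropriate character. The choice of a large $\ell\equiv 3\pmod 4$ split in the real field $F$ so that the coefficient field avoids $\sqrt{-1}$, allowing Proposition \ref{conjugation} to apply, is exactly the mechanism the paper uses.

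There is one genuine gap. Your oddness argument, via Proposition \ref{conjugation}(b) and Proposition \ref{oddness}(a), is only stated and only valid for \emph{positive} $u\in\{s,t,st\}$; for negative $u$ the field $\Q(\sqrt u)$ has no real places and neither proposition says anything about the parity of $\eta_u$. Yet in the final paragraph you apply Theorem \ref{thm:3.2.1} and construct $h_u$ ``for each of $u=s,t,st$'', which requires hypothesis (e) for all three values. The paper closes this gap by the Remark following Theorem \ref{thm:exist-chi_u}: the three $\eta_u$ are projectively equivalent, hence share the same parity, so oddness for one positive $u$ (which always exists, since an even number of $s,t,st$ are negative) propagates to all $u$. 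You should insert this step.

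Two minor points: the twist defining $h_u$ should be by $\chi_u$ rather than $\chi_u^{-1}$ (trace through Theorem \ref{thm:3.2.1}: $\sigma_u^\vee \simeq \rho_{g_u,\lambda}|_{G_{\Q(\sqrt u)}}\otimes\chi_u$), and the identity $L(s,\rho_\ell^\vee)=L(s,\sigma_u^\vee)$ is the inductivity of Artin $L$-functions, not Frobenius reciprocity.
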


In addition to these three automorphic interpretations, $L(s, \rho_\ell^\vee)$ also agrees with the $L$-functions of an automorphic form of $\t{GL}_2(\mathbb A_\Q) \times \t{GL}_2(\mathbb A_\Q)$ and an automorphic form of $\t{GL}_4(\mathbb A_\Q)$, as remarked after Theorem \ref{thm:3.2.1}.

If $\rho_\ell$ is unramified at a prime $p$ which splits in $\Q(\sqrt u)$, then $h_u$ is an eigenfunction of the Hecke operators at the two places $\gp_{\pm}$ of $\Q(\sqrt u)$ above $p$, and the product of the local $L$-factors attached to $h_u$ at these two places is
\begin{eqnarray*}
\frac{1}{(1 - A_{+,u}(p)p^{-s} + B_{u}(p)p^{-2s})(1 - A_{-,u}(p)p^{-s} + B_{u}(p)p^{-2s})},
\end{eqnarray*}
where $H_{\gp_{\pm}, u}(x) = x^2 - A_{\pm,u}(p)x+B_{u}(p)$ are the characteristic polynomials of $\sigma_u({ \t{Frob}}_{\gp_{\pm}})$.

\begin{theorem}\label{ASD}  Keep the same notation and hypotheses as in Theorem \ref{QMdescent}. Assume the modular curve $X_\G$ has a model defined over $\Q$ such that the cusp at $\infty$ is a $\Q$-rational point  with cusp width $\mu$. Further, assume that $J_s$ and $J_t$ arise from  algebraic real linear combinations of normalizers of $\G$ whose actions on $V_p$ admit QM over $\Q(\sqrt s, \sqrt t)$  for almost all $p$.
 Then there exists a  finite set of primes $T$, including ramified primes, primes $< 2{\kappa}-2$, and primes where $X_\G$ has bad reductions, such that for each $p \notin T$,
 $S$  has a $p$-adically integral basis $f_{\pm, p} = \sum_{n \ge 1} a_{\pm, u}(n) q^{n/\mu}$ which are $\pm i$-eigenfunctions of $J_u$ for some $u \in \{s, t, st\}$ such that $p$ splits in $\Q(\sqrt u)$. The Atkin and Swinnerton-Dyer congruences (\ref{eq:ASD2}) hold with $A_{\pm, u}(p)$ and $B_{ u}(p)$ coming from the characteristic polynomials $H_{\gp_{\pm}, u}(x) = x^2 - A_{\pm,u}(p)x+B_{u}(p)$ of $\rho_\ell({ \t{Frob}}_p)$ on the $\pm i$-eigenspaces of $J_u$ on the space of $\rho_\ell$. Moreover, $(1 - A_{+,u}(p)p^{-s} + B_{u}(p)p^{-2s})^{-1}$ and $(1 - A_{-,u}(p)p^{-s} + B_{u}(p)p^{-2s})^{-1}$ are the local factors
 at the two places $\gp_{\pm}$ of  $\Q(\sqrt u)$ above $p$ of an automorphic form $h_u$ for $\t{GL}_2 $ over $\Q(\sqrt u)$.
 \end{theorem}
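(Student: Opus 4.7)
The plan is essentially to assemble, into a single coherent proof, the pieces already developed in the discussion preceding the theorem, while being careful about integrality, the matching of characteristic polynomials, and the choice of $u$ depending on $p$.

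First I would define the exceptional set $T$ to include: primes where some $\rho_\ell$ is ramified, primes ramified in $\Q(\sqrt s, \sqrt t)$, primes where $X_\G$ has bad reduction, primes dividing the denominators of the normalizer action or the finite set of primes where the forms in a chosen number field basis of $S$ fail to be integral, and primes $< 2\kappa - 2$. For any $p \notin T$, I would observe that $\t{Frob}_p$ in $\gal(\Q(\sqrt s, \sqrt t)/\Q) \cong (\Z/2\Z)^2$ has order $1$ or $2$, hence fixes at least one of the three quadratic subfields; equivalently, there always exists $u \in \{s, t, st\}$ such that $p$ splits in $\Q(\sqrt u)$ into two places $\gp_{\pm}$. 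Fix such a $u$.

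Next, I would work on the crystalline/$p$-adic side. On the Scholl space $V_p$, the operator $J_u$ satisfies $J_u^2 = -\t{id}$, so after extending scalars to a field inside $\Q_p \otimes K(\sqrt{-1})$, it decomposes $V_p$ into $\pm i$-eigenspaces $V_{p,\pm,u}$, each $2$-dimensional. By the QM condition (b)$'$, $J_u$ commutes with $F_p$, so $V_{p,\pm,u}$ are $F_p$-stable. As argued in the paragraph preceding the theorem (following the Lemma 8 argument in \cite{all05} and using that $J_s, J_t$ come from real algebraic combinations of normalizers), the characteristic polynomials of $F_p$ on $V_{p,\pm,u}$ coincide with those of $\rho_\ell(\t{Frob}_p)$ on the $\pm i$-eigenspaces $W_{\ell,p,\pm,u}$ of $J_u$, which by Corollary \ref{factorization} are precisely $H_{\gp_\pm, u}(x) = x^2 - A_{\pm,u}(p)x + B_u(p)$. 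I would then select $f_{\pm, u} \in S_p \cap V_{p,\pm,u}$; these exist because each eigenspace contains a nonzero cusp form (as noted in the discussion), and after normalization by a $p$-unit they remain $p$-adically integral. They form a basis of $S$ since, being in distinct eigenspaces of $J_u$, they are linearly independent, and $\dim S = 2$.

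With $\{f_{\pm, u}\}$ in hand and the characteristic polynomial of $F_p$ on each $V_{p,\pm,u}$ identified as $x^2 - A_{\pm,u}(p)x + B_u(p)$, the three-term congruences \eqref{eq:ASD2} follow by applying the argument Scholl used to establish his $(2d+1)$-term congruences in \cite{sch85b}, restricted to the invariant two-dimensional subspace $V_{p,\pm,u}$; this reduction of Scholl's congruences to two-term factors on invariant subspaces is precisely how the $d=1$ case of ASD is proved, and has been adapted to our setting in \cite{sch85b, L5, all05}. This gives the ASD congruences asserted.

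Finally, the automorphic interpretation of the local $L$-factors is immediate from Theorem \ref{QMdescent}: the automorphic form $h_u$ on $\t{GL}_2(\BA_{\Q(\sqrt u)})$ has the property that $L(s, h_u) = L(s, \rho_\ell^\vee)$, and after unwinding the factorization $H_p(x) = H_{\gp_+, u}(x) H_{\gp_-, u}(x)$ via Corollary \ref{factorization}(a2), the Euler factors of $h_u$ at $\gp_{\pm}$ are $(1 - A_{\pm,u}(p)p^{-s} + B_u(p)p^{-2s})^{-1}$. The main obstacle in this program is the step identifying the characteristic polynomial of $F_p$ on $V_{p,\pm,u}$ with $H_{\gp_\pm,u}(x)$: this requires Scholl's comparison between the crystalline Frobenius on $V_p$ and the $\ell$-adic Frobenius on $W_\ell$ to interact compatibly with the QM operators, which forces the hypothesis that $J_s, J_t$ arise from real algebraic combinations of normalizers of $\G$ (so that they act geometrically on the motive underlying $\rho_\ell$ and are defined both crystallinely on $V_p$ and $\ell$-adically on $W_\ell$).
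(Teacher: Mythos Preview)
Your proposal is correct and follows essentially the same route as the paper. In fact the paper does not give a separate proof of this theorem; it is stated as a summary of the discussion immediately preceding it, and your outline recapitulates exactly that discussion: choose $u$ so $p$ splits in $\Q(\sqrt u)$, decompose $V_p$ into the $\pm i$-eigenspaces of $J_u$, match the characteristic polynomials of $F_p$ on $V_{p,\pm,u}$ with $H_{\gp_\pm,u}(x)$ via Scholl's Proposition~4.4 and the Lemma~8 argument of \cite{all05} (this is where the normalizer hypothesis is used), deduce the three-term congruences from \cite{sch85b, L5}, and read off the local $L$-factors from Theorem~\ref{QMdescent}.
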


Observe that the basis for which the ASD conjecture holds depends on $p$ modulo the discriminant of $\Q(\sqrt s, \sqrt t)$. Further, the ASD congruences (\ref{eq:ASD2}) describe congruence relations between Fourier coefficients of noncongruence forms $f_{\pm, p}$ and those of congruence forms $h_u$.

\begin{remark}  Under the same assumptions as above except that the field $\Q(\sqrt s, \sqrt t)$ is a quadratic extension of $\Q$, the same argument shows that, for almost all primes $p$, if each eigenspace of $J_s$ contains a nonzero form in $S$, then the ASD conjecture on $S$ holds at such $p$.
\end{remark}

\section{Applications}

As applications, we exhibit some examples of Scholl representations which admit QM. We use Theorem  \ref{QMdescent}  to conclude the automorphy of the representation and Theorem \ref{ASD} to conclude the ASD congruences. We expect that there is a wide variety of cases to which Theorem \ref{thm:3.2.1} could be applied.

\subsection{Old cases}\label{sec:3.3} We first re-establish  several known automorphy results, previously proved using Faltings-Serre modularity
criterion. While the new argument is more conceptual, no information on the congruence modular form  giving rise to the Galois representation is revealed.

In a series of papers \cite{lly05, all05, long061}, a sequence of genus
0 normal subgroups, denoted by $\G_n$, of $\G^1(5)$ are considered.
Their modular curves are $n$-fold covers of the modular curve for
$\G^1(5)$, ramified only at two cusps $\infty$ and $-2$ of
$\G^1(5)$ with ramification degree $n$. The group $\G_n$ is
noncongruence when $n\neq 1,5$, its modular curve is defined over $\Q$ with the cusp at $\infty$ a $\Q$-rational point, and
an explicit basis for
$S_3(\G_n)$ with rational coefficients was constructed (cf.  \cite[Prop. 1]{all05}).
To the $(n-1)$-dimensional space $S_3(\G_n)$ Scholl has attached  a  compatible family of $2(n-1)$-dimensional $\ell$-adic Galois representations of $G$.
The functions in $S_3(\G_n)$ belonging to a nontrivial
supergroup $\G_m$ of $\G_n$ are called ``old" forms; denote by
$S_3(\G_n)^{\text{new}}$ the subspace of forms in $S_3(\G_n)$
orthogonal to the ``old" forms. Accordingly, there is a compatible
family of degree $2\varphi(n)$-dimensional $\ell$-adic representations
$\rho_{3,\gl,n}^{\text{new}}$ of $G$ attached to
$S_3(\G_n)^{\text{new}}$. Here $ \varphi(n)$ stands for the Euler
phi-function (cf. \cite[\S 3]{long061}).  For almost all $\ell$, $\rho_{3,\gl,n}^{\text{new}}|_{G_\ell}$ is crystalline with Hodge Tate weights $\{0, -2\}$, each with multiplicity $\varphi(n)$, and the action of complex conjugation has trace zero. Let $A=\begin{pmatrix}
-2&-5\\1&-2
\end{pmatrix}$ and $\zeta=\begin{pmatrix}
  1&5\\0&1
\end{pmatrix}$; they induce operators $A^*$ and $\zeta^*$ on the
cohomology level, whose actions on the space of $\rho_{3,\gl,n}^{\text{new}}$
 satisfy the following relations:
 $$(A^*)^2=-I, \quad
(\zeta^*)^n=I, \quad \text{and} \quad \zeta^* A^* \zeta^* =A^*.$$

\begin{theorem}[\cite{lly05, all05, long061}]\label{thm:old}
When $n=3,4,6,$ the degree $4$ representations $\rho_{3,\gl,n}^{\text{new}}$ are
automorphic and the ASD conjecture on $S_3(\G_n)^{\text{new}}$ holds.
\end{theorem}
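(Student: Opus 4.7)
The plan is to exhibit each $\rho_{3,\ell,n}^{\text{new}}$ as admitting QM over the quadratic field $\Q(\sqrt{-1})$, then apply Theorem \ref{QMoverquadratic} for automorphy and the Remark following Theorem \ref{ASD} for the ASD congruences. The given data $A^*,\zeta^*$ with the relations $(A^*)^2=-I$, $(\zeta^*)^n=I$, and $\zeta^*A^*\zeta^*=A^*$ supply all the algebra needed.

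First I would build the two QM operators. The relation $\zeta^*A^*\zeta^*=A^*$ rewrites as $\zeta^*A^*=A^*\zeta^{*-1}$, and on the new part $\zeta^*$ has minimal polynomial $\Phi_n$. For $n=4$ this gives $\zeta^{*-1}=-\zeta^*$, so $\zeta^*A^*=-A^*\zeta^*$, and the pair $J_s:=\zeta^*$, $J_t:=A^*\zeta^*$ satisfies $J_s^2=J_t^2=-I$ and $J_sJ_t=-J_tJ_s$. For $n\in\{3,6\}$ one sets $B:=\zeta^*-\zeta^{*-1}$, which equals $2\zeta^*+I$ in the first case and $2\zeta^*-I$ in the second; a direct check using $\Phi_n(\zeta^*)=0$ gives $B^2=-3I$ and $A^*B=-BA^*$, so $J_s:=B/\sqrt 3$ and $J_t:=A^*B/\sqrt 3$ do the job.

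Next I would verify Definition \ref{QM}(b) by analyzing the Galois action. The noncongruence covers $X_{\Gamma_n}\to X_{\Gamma^1(5)}$ of \cite{lly05,all05,long061} are $\mu_n$-torsors defined over $\Q$, so $\zeta^*$ lives only over $\Q(\zeta_n)$ and $G$ acts on it via the mod-$n$ cyclotomic character. For $n=4$, $\Q(\zeta_4)=\Q(i)$, and both $\zeta^*$ and $A^*\zeta^*$ (the latter since $A^*$ is $\Q$-rational) transform by the quadratic character of $\Q(i)/\Q$. For $n\in\{3,6\}$, $B$ transforms by the quadratic character of $\Q(\sqrt{-3})/\Q$, while the scalar $1/\sqrt 3$ contributes the character of $\Q(\sqrt 3)/\Q$; their product is precisely the character of $\Q(\sqrt{-1})/\Q$. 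Hence in all three cases both $J_s$ and $J_t$ twist by the same nontrivial quadratic character of $\Q(i)/\Q$, yielding QM over the quadratic field $\Q(\sqrt{-1})$.

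With QM established, Theorem \ref{QMoverquadratic} decomposes $\rho_{3,\ell,n}^{\text{new}}$ after scalar extension as $\sigma\oplus(\sigma\otimes\theta)$ with $\theta$ the quadratic character of $\Q(i)/\Q$, and its proof shows both summands are modular: Corollary \ref{cxconj} gives $\Tr\,\rho_\ell^{\text{new}}(c)=0$, Proposition \ref{conjugation}(c) then gives oddness, and Theorem \ref{tool} delivers modularity. Since $A,\zeta\in\t{GL}_2^+(\Q)$ normalize $\Gamma_n$, the operators $J_s,J_t$ are algebraic real linear combinations of normalizers acting on $V_p$, so the Remark following Theorem \ref{ASD} applies and yields the three-term congruences (\ref{eq:ASD2}) on a $p$-adic basis of $\pm i$-eigenvectors of $J_s$. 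The main obstacle is the verification of Definition \ref{QM}(b)---pinning down the cyclotomic twist of $\zeta^*$ and its interaction with the scalar twist from $\sqrt 3$---together with the small check that $J_s$ acts nontrivially on the $2$-dimensional $S_3(\Gamma_n)^{\text{new}}$ (so that each eigenspace contains a nonzero form); once these are handled, the chain from Theorem \ref{QMoverquadratic} to the Remark after Theorem \ref{ASD} is mechanical.
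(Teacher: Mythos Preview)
Your algebraic verification of condition (a) in Definition \ref{QM} is fine, but the verification of condition (b) contains two errors that invalidate the conclusion.

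First, $A^*$ is not $\Q$-rational. Although the matrix $A$ has rational entries, its induced action on the cover $X_{\Gamma_n}\to X_{\Gamma^1(5)}$ sends the Hauptmodul $t$ (with $t^n$ a fixed rational function on $X_{\Gamma^1(5)}$) to $c/t$ where $c$ is an $n$-th root of a rational number, so this automorphism is only defined over an extension of $\Q$. Indeed the paper takes $J_{-1}=A^*$ for $n=4,6$ and $J_{-3}=A^*$ for $n=3$, meaning $A^*\rho_\ell(g)=-\rho_\ell(g)A^*$ precisely for $g\notin G_{\Q(i)}$ (resp.\ $g\notin G_{\Q(\sqrt{-3})}$).

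Second, the scalar $1/\sqrt 3$ contributes no Galois twist: as a linear operator on the $\ell$-adic representation space it commutes with every $\rho_\ell(g)$. Condition (b) is a statement about (anti)commutation of linear operators with $\rho_\ell(g)$, not about a Galois action on scalar coefficients.

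With the correct twists your pairs fail Definition \ref{QM}. For $n=4$, both $A^*$ and $\zeta^*$ anticommute with $\rho_\ell(g)$ exactly when $g\notin G_{\Q(i)}$, so your $J_t=A^*\zeta^*$ commutes with all of $G$ and the associated $t$ would have to be a square. For $n=3$, both $A^*$ and $B=\zeta^*-\zeta^{*-1}$ anticommute with $\rho_\ell(g)$ exactly when $g\notin G_{\Q(\sqrt{-3})}$, so again your $J_t=A^*B/\sqrt 3$ commutes with all of $G$. The paper instead keeps $A^*$ itself as one of the two generators: for $n=3$ the pair $(J_{-3},J_{-27})=(A^*,\tfrac{1}{\sqrt 3}(\zeta^*-\zeta^{*-1}))$ yields QM over the quadratic field $\Q(\sqrt{-3})$ and Theorem \ref{QMoverquadratic} applies; for $n=4,6$ the pair $J_{-1}=A^*$ together with $J_{-2}=\tfrac{1}{\sqrt 2}A^*(1+\zeta^*)$ (resp.\ $J_{-3}=\tfrac{1}{\sqrt 3}(\zeta^*-\zeta^{*-1})$) yields QM over the biquadratic field $\Q(\sqrt{-1},\sqrt{-2})$ (resp.\ $\Q(\sqrt{-1},\sqrt{-3})$), and one invokes Theorem \ref{QMdescent} rather than Theorem \ref{QMoverquadratic}.
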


\begin{proof}
 When $n=3$, $A^*$ and $\zeta^*$ are both defined over $\Q(\sqrt {-3})$.
The operators $J_{-3} := A^*$ and $J_{-27} := \frac{1}{\sqrt 3}(\zeta^* - (\zeta^*)^{-1})$
define QM over $\Q(\sqrt {-3})$ on the representation space of $\rho_{3,\gl,3}^{\text{new}}$.
The automorphy of  this representation follows from Theorem \ref{QMoverquadratic}.

When $n=4$, the $4$-dimensional space of
$\rho_{3,\gl,4}^{\text{new}}$ admits QM  over $\Q(\sqrt{-1}, \sqrt{-2})$ by $J_{-1}=A^*$ and
$J_{-2}=\frac 1{\sqrt 2}A^*(1+\zeta^*)$. We can use $F=\Q(\sqrt 2)$ in Theorem \ref{QMdescent}.
Similarly when $n=6$, the space of $\rho_{3,\gl,6}^{\text{new}}$ admits QM  over $\Q(\sqrt{-1}, \sqrt{-3})$ by
$J_{-1}=A^*$ and $ J_{-3}=\frac 1{\sqrt 3}(\zeta^*-(\zeta^*)^{-1})$. In this case, $F=\Q(\sqrt 3)$.
 By Theorem \ref{QMdescent}
$\rho_{3,\gl,n}^{\text{new}}$ is automorphic for $n = 4$ and $6$.

Moreover, the ASD conjecture on $S_3(\G_n)^{\text{new}}$ holds for $n = 3, 4, 6$ as it is easy to verify that the assumptions in Theorem \ref{ASD} and the remark following it are satisfied.
\end{proof}

\begin{remark}
The above automorphy argument does not work for $n>6$. However, we conjecture that when $n=p$ is an odd prime, the
  representation $\rho_{3,\gl,p}$ is related to a Hilbert modular form.
\end{remark}

Another known example of 4-dimensional Scholl representation with QM by a biquadratic field has been investigated in \cite{HLV10}.

\subsection{A new example  with  QM over a biquadratic field}

The group $\G^1(6)$ is normalized by $W_6$, $W_2=\begin{pmatrix}
 2&-6\\1&-2
\end{pmatrix}$ and $W_3=\begin{pmatrix}
  3&6\\1&3
\end{pmatrix}$. It is a torsion-free index-12 genus $0$ subgroup of $P\SL$ with four cusps:
$\infty,$ $0=W_6^{-1}\cdot \infty,$ $-3=W_3^{-1}\cdot \infty,$ and
$-2={W_2}^{-1}\cdot \infty$. Its space of weight 3 cusp forms is
0-dimensional. The function $\displaystyle
F:=\frac{\eta(z)^4\eta(2z)\eta(6z)^5}{\eta(3z)^4}$ is a weight 3
Eisenstein series for $\G^1(6)$ which vanishes at every cusp except
$-2$.

Let $B=\frac{\eta(2z)^3\eta(3z)^9}{\eta(z)^3\eta(6z)^9}$ and $\G$ be
an index-6 subgroup of $\G^1(6)$ whose  modular curve is a 6-fold
cover of the modular curve for $\G^1(6)$,  ramified totally at the two
cusps $\infty$ and $-2 = W_2^{-1}\infty$ and unramified elsewhere.
I.e., $\G$ admits $\displaystyle (B)^{1/6}$ as a Hauptmodul. The
space $S_3(\G)$ has a basis $\langle F_j=B^{(6-j)/6}\cdot F
\rangle_{j=1}^5$. The functions $F_j$ with $2 \le j \le 4$ are for
certain supergroups of $\G$, thus are regarded as ``old" forms.  The
``new" space $S_3(\G)^{\text{new}}$   for $\G$ is spanned by
$$F_1=(B)^{5/6}\cdot
F  \quad \text{and}  \quad  F_5=(B)^{1/6}\cdot F.$$

On the $p$-adic side, the action of $W_2$ on $S_3(\G)^{\t{new}}$ is as
follows. By noting $B|W_2=\frac{-8}{B}$, we claim
\begin{equation}
F_1 | W_2 = 2 e^{2\pi i 5/12}F_5   \quad {\rm and} \quad F_5 |
W_2 = \frac{1}{2} e^{2 \pi i / 12} F_1.
\end{equation}

\noindent Indeed, by definition, $B = c \frac{(F | W_2)}{F}$
for some nonzero constant $c$,
$F_1 = B^{5/6} F = c^{5/6} (F | W_2)^{5/6} F^{1/6}$, and $F_5 =
B^{1/6} F = c^{1/6} F^{5/6} (F | W_2)^{1/6}$. Since $(W_2)^2 =
\begin{pmatrix} -2 & 0\\ 0 & -2 \end{pmatrix}$ and $F$ has weight $3$,
we have $F | W_2 W_2 = -F$. Therefore

$$F_1 | W_2 = c^{5/6} (-1)^{5/6} F^{5/6} (F | W_2)^{1/6} = c^{2/3}
(-1)^{5/6} F_5$$
\noindent and
$$F_5 | W_2 = c^{1/6} (F | W_2)^{5/6} (-1)^{1/6} F^{1/6} = c^{-2/3}
(-1)^{1/6} F_1.$$ To compute the constant $c$, we use the fact that
$B|W_2 = \frac{-8}{B}$. This gives $c^2 = 8$ and hence $c^{2/3} =
2$.

Let $\zeta=\begin{pmatrix} 1&1\\0&1
\end{pmatrix}$. It is easy to verify that
\begin{equation}
  F_1|{\zeta}=e^{\pi i /3} F_1 \quad {\rm and} \quad F_5|{\zeta}=e^{-\pi i /3}
  F_5.
\end{equation}

Let $V$ be the
4-dimensional $p$-adic Scholl space which contains $\langle F_1, F_5
\rangle$ as a subspace and their dual $\langle F_1^{\vee},
F_5^{\vee}\rangle$ as a quotient (cf. \cite{sch85b}). Let
$J_{-2}=\zeta W_2, J_{-3}=\frac 1{\sqrt 3}(2\zeta-1),
J_{6}=J_{-2}J_{-3}$. With respect to a suitable basis $J_{-2}$ and $
J_{-3}$ can be represented by the following matrices respectively:
$$\begin{pmatrix}0&\frac12 iI_2\\ 2 iI_2&0
\end{pmatrix} \quad {\rm and} \quad  \begin{pmatrix}iI_2&0\\0&-iI_2
\end{pmatrix}.$$ Thus $J_{-2}$ and $J_{-3}$ generate a quaternion algebra
over $\Q$ with $J_{-2}^2=-I, J_{-3}^2=-I,
J_{-3}J_{-2}=-J_{-2}J_{-3}$.  It is easy to check that
\begin{itemize}
\item $F_1$ and $F_5$ are eigenvectors of $J_{-3}$ with eigenvalues $\pm i$;
\item $F_1\pm 2  F_5$ are eigenvectors of $J_{-2}$ with eigenvalues
$\pm i $;
\item $F_1\pm 2i F_5$ are eigenvectors  of
$J_{6}$ with eigenvalues $\pm i$.
\end{itemize}
\bigskip

We now turn to the $\ell$-adic side.
 Under the action of $W_2$, $B \mapsto
-8/B$. This gives rise to an involution on the modular curves for $\G^1(6) $ since $W_2$ normalizes $\G^1(6)$. Consider two elliptic surfaces $\mathcal E_{\G^1(6) }$ and $\mathcal E_{W_2^{-1}\G^1(6) W_2}$ fibred over the modular curve $X_{\G^1(6)}$ parametrized by $B$ defined by the following equations:
\begin{eqnarray*}
  \mathcal E_{\G^1(6)} &&: \quad Y^2=X^3+a(B)X^2+b(B)X, \quad\\ \mathcal E_{W_2^{-1}\G^1(6)
W_2}&&: \quad Y^2=X^3-2a(\frac{-8}B)X^2+ \left (a(\frac{-8}B)^2-4b(\frac{-8}B)\right )X,
\end{eqnarray*}   where
$$a(B):=\frac{2}{27}-\frac{5}{27}B-\frac{1}{108}B^2 \quad \text{and} \quad b(B):=\frac{1}{729}+\frac{1}{243}B+\frac{1}{729}B^3+\frac{1}{243}B^2.$$
Then $$(X,Y, B) \mapsto \left (\frac {Y^2}{X^2},
\frac{Y(b(B)-X^2)}{X^2}, \frac{-8}B \right )$$ gives a $2$-isogeny from $\mathcal
E_{\G^1(6) }$ to $\mathcal E_{W_2^{-1}\G^1(6)  W_2}$.
\medskip

By letting $t^6=B$ we obtain an explicit defining equation for the
elliptic surface $\mathcal E_\G$ fibred over the modular curve of
$\G$. A similar argument yields the following degree-2 isogeny
between  $\mathcal E_{\G}$ and $\mathcal E_{W_2^{-1}\G W_2}$:  Again,
$W_2$ normalizes $\G$ and it gives rise to the following  map
{ \begin{equation}W_2: \quad (X,Y, t) \mapsto \left ( \frac{Y^2}{X^2},
\frac{Y(b(t^6)-X^2)}{X^2}, \sqrt[6]{-8}/t\right ).
\end{equation}}
Also on  $\mathcal E_{\G}$ and $\mathcal E_{W_2^{-1}\G W_2}$, we
have
$$\zeta: \quad (X,Y, t) \mapsto (X,Y, e^{- \pi i/3}t).$$
It is straightforward to check that
\begin{equation}\label{eq:5.4}\zeta W_2\zeta=W_2,
\end{equation} where the $\zeta$ on the left acts on $\mathcal
E_{W_2^{-1}\G W_2}$ and the one on the right acts on $\mathcal
E_{\G}$.

The composition $T_{-2}=\zeta\circ W_2$ is an isogeny defined over $\Q_{\ell}(\sqrt{-2})$. Let
$\widehat T_{-2}$ denote its dual isogeny.

We first note that the above map $T_{-2}$ induces a natural
homomorphism $g_1$ from the parabolic cohomology $H^1(X_{\G}\otimes
\overline{\BQ}, i_*R^1h_*^0\BQ_\ell)$ to that of $X_{W_2^{-1}\G W_2}$. Here
$i: \frak H/\G \rightarrow X_{\G}=(\frak H/\G)^*$ is the inclusion
map and $h: \mathcal E_\G \rightarrow (\frak H/\G)^*$ is the natural
projection and $h^0$ is the restriction of $h$ to $\frak H/\G$.  We
further let $w_2: X_{\G} \rightarrow X_{W_2^{-1}\G W_2}$ denote the
isomorphism which sends $t$ to
$\sqrt[6]{-8}/t$. Compared with $T_{-2}$, this map is on the
base curve level.
Using $w_2$, we have a natural isomorphism
$$g_2: H^1(X_{\G}\otimes \overline{\BQ}, i_*R^1h_*^0\BQ_\ell)\cong
H^1(X_{W_2^{-1}\G W_2}\otimes \overline{\BQ},
(w_2)_*i_*R^1h_*^0\BQ_\ell),$$ and the cohomology on the right is isomorphic to the parabolic cohomology attached to weight 3 cusp
forms of ${W_2^{-1}\G W_2}$. The composition $g_2^{-1}\circ g_1$
is an endomorphism on $H^1(X_{\G}\otimes \overline{\BQ},
i_*R^1h_*^0\BQ_\ell)$ and is denoted by $T_{-2}$ again. On $H^1(X_{\G}\otimes
\overline{\BQ}, i_*R^1h_*^0\BQ_\ell)$ the map $(T_{-2})^2=-2~id$ is induced by the fiber-wise  multiplication by $2$  map
$\widehat T_{-2} \circ T_{-2}$.

Denote by $\rho_\ell^{\text{new}}$ the 4-dimensional $\ell$-adic Scholl representation
of $G$ corresponding to the  ``new" forms of $S_3(\G)$ (cf.
\cite{long061}).   Let $J_{-2} = \frac{1}{\sqrt 2} T_{-2}$ and let  $J_{-3}$ denote
$\frac 1{\sqrt 3}(2\zeta-1)^*$  on $\rho_\ell^{\text{new}}$. As linear operators on the representation space of
$\rho_\ell^{\text{new}}$, $J_{-2}$ and $J_{-3}$ generate a quaternion group.  We know from the
above discussions that the actions of $J_{\d}$ are defined
over $\Q(\sqrt{\d})$ for $\d=-2,-3$.  Choose the field $F $ in Theorem \ref{QMdescent} to be $\Q(\sqrt 2, \sqrt 3)$.
 The space of $\rho_{\ell}^{\text{new}}$   over fields in $\Q_\ell \otimes F$ is endowed with QM over $\Q(\sqrt {-2}, \sqrt{-3})$. By Theorem \ref{QMdescent},  we conclude the automorphy of $\rho_\ell^{\text{new}}$.

We have also seen that $J_{-2}$ and $J_{-3}$ act on the space $\langle F_1, F_5 \rangle$ and the space $V_p$
admits QM over $\Q(\sqrt {-2}, \sqrt {-3})$. Since $J_{-2}$ and $J_{-3}$  arise from normalizers of $\G^1(6)$, by Theorems \ref{QMdescent} and \ref{ASD}, we have
\begin{theorem}
The representation $\rho_\ell^{\text{new}}$ is automorphic, and the ASD congruences hold
on the space $\langle F_1, F_5 \rangle$.
\end{theorem}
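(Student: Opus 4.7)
The plan is to apply Theorem \ref{QMdescent} and Theorem \ref{ASD} with the biquadratic field $\Q(\sqrt{-2}, \sqrt{-3})$, the real extension $F = \Q(\sqrt 2, \sqrt 3)$, and the operators $J_{-2}$, $J_{-3}$ constructed in the preceding discussion. Since the correspondences $W_2$, $\zeta$, the $2$-isogeny between $\mathcal E_\G$ and $\mathcal E_{W_2^{-1}\G W_2}$, and the eigenvectors of $J_{-2}$, $J_{-3}$, $J_6$ on $\langle F_1,F_5\rangle$ have all been written down explicitly, the proof reduces to checking that conditions (a) and (b) of Definition \ref{QM} hold on the $\ell$-adic representation space of $\rho_\ell^{\t{new}}$ and that the remaining hypotheses of Theorem \ref{ASD} (algebraic real normalizer origin and QM on $V_p$) are in force.

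Condition (a) is the quaternionic check. On cohomology, $\widehat T_{-2}\circ T_{-2}$ is the multiplication-by-$2$ map obtained from the explicit $2$-isogeny, so $T_{-2}^2=-2\cdot\t{id}$ and $J_{-2}^2=-\t{id}$. Since $\zeta$ acts on $\langle F_1,F_5\rangle$ (and hence on the corresponding $\ell$-adic subspace) with eigenvalues $e^{\pm\pi i/3}$, roots of $x^2-x+1$, one gets $(2\zeta-1)^2=-3\cdot\t{id}$, whence $J_{-3}^2=-\t{id}$. The anticommutation $\{J_{-2},J_{-3}\}=0$ then follows from the normalizer relation $\zeta W_2\zeta=W_2$ of (\ref{eq:5.4}) (equivalently $W_2\zeta=\zeta^{-1}W_2$) combined with the same identity $\zeta^2-\zeta+1=0$: a brief bracket computation using these two identities gives the desired zero sum. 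For condition (b), I would make the canonical choice $\sqrt[6]{-8}=\sqrt{-2}$, so that the base-change part of $W_2$ reads $t\mapsto\sqrt{-2}/t$ and $T_{-2}$ is realized over $\Q(\sqrt{-2})$; after extending scalars through $F=\Q(\sqrt 2,\sqrt 3)$, the operator $J_{-2}=\tfrac{1}{\sqrt 2}T_{-2}$ commutes with $\rho_\ell^{\t{new}}(g)$ for $g\in G_{\Q(\sqrt{-2})}$ and is negated by any $g\in G\setminus G_{\Q(\sqrt{-2})}$ via $\sqrt{-2}\mapsto -\sqrt{-2}$. Because $\zeta$ multiplies the Hauptmodul by $e^{-\pi i/3}\in\Q(\sqrt{-3})$, the same Galois argument gives the $\pm$ commutation relation for $J_{-3}$ over $\Q(\sqrt{-3})$.

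With (a) and (b) of Definition \ref{QM} verified, Theorem \ref{QMdescent} immediately delivers the automorphy of $\rho_\ell^{\t{new}}$, together with automorphic forms $h_{-2}, h_{-3}, h_6$ on $\t{GL}_2$ over the quadratic subfields of $\Q(\sqrt{-2},\sqrt{-3})$. For the ASD congruences I would invoke Theorem \ref{ASD}: $X_\G$ has the required rational model with $\Q$-rational cusp at $\infty$ and cusp width $6$ coming from the Hauptmodul $B^{1/6}$; the operators $J_{-2}=\tfrac{1}{\sqrt 2}\zeta W_2$ and $J_{-3}=\tfrac{1}{\sqrt 3}(2\zeta-1)$ are algebraic real linear combinations of the normalizers $\zeta,W_2,\zeta W_2$ of $\G$; and QM on $V_p$ is confirmed by the explicit eigenvector list (namely $F_1, F_5$ are $\pm i$-eigenvectors of $J_{-3}$; $F_1\pm 2F_5$ are those of $J_{-2}$; $F_1\pm 2iF_5$ those of $J_6$), so each $J_u$-eigenspace for $u\in\{-2,-3,6\}$ contains a nonzero form in $\langle F_1,F_5\rangle$. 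Condition (b)$'$ on the commutation of $J_u$ with $F_p$ then follows from Scholl's compatibility between $F_p$ on $V_p$ and $\rho_\ell^{\t{new}}(\t{Frob}_p)$, together with the field-of-definition analysis used for the $\ell$-adic side.

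The main obstacle is a clean verification of condition (b) of Definition \ref{QM} on the $\ell$-adic level: one must track how the Galois action on the étale cohomology of $\mathcal E_\G$ interacts with the correspondences $T_{-2}$ and $\zeta$ through the chosen roots $\sqrt{-2}$ and $e^{-\pi i/3}$, and confirm that nontrivial elements outside $G_{\Q(\sqrt{-2})}$, $G_{\Q(\sqrt{-3})}$, and $G_{\Q(\sqrt{-2},\sqrt{-3})}$ produce exactly the required sign flips. The corresponding $p$-adic statement (b)$'$ then comes along for the ride via Scholl's matching of characteristic polynomials. Once this Galois descent bookkeeping is in place, Theorems \ref{QMdescent} and \ref{ASD} handle the remaining content.
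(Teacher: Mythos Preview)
Your proposal is correct and follows essentially the same approach as the paper: the discussion preceding the theorem already constructs $J_{-2}$ and $J_{-3}$ on both the $p$-adic and $\ell$-adic sides, verifies the quaternion relations and the field-of-definition conditions, and then invokes Theorems \ref{QMdescent} and \ref{ASD} exactly as you outline. The only minor discrepancy is notational: on the $p$-adic side the paper takes $J_{-2}=\zeta W_2$ (without the $\tfrac{1}{\sqrt 2}$, since $W_2^2$ already acts as $-1$ on weight-$3$ forms), while the $\tfrac{1}{\sqrt 2}$ normalization is used only on the $\ell$-adic side where $T_{-2}^2=-2\cdot\t{id}$ comes from the degree-$2$ isogeny; this does not affect the argument.
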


In particular, $\rho_\ell^{\text{new}}$ is isomorphic to  $\eta_{-2}\otimes \text{Ind}_{G_{\Q(\sqrt{-2})}}^{G} \chi_{-2}^{-1}$, { which corresponds to an automorphic representation of} $\t{GL}_2(\BA_\Q) \times \t{GL}_2(\BA_\Q)$, where $\chi_{-2}$ is a character of $G_{\Q(\sqrt{-2})}$. Similar to \cite{all05}, we conclude this section by exhibiting, from numerical data, a congruence modular form $f$ which gives rise to $\eta$ with $\chi_{-2}=\xi_1 \xi_{2}$ being the product of the following two characters of $G_{\Q(\sqrt{-2})}$: $\xi_1$ is a quartic character of
$\Gal(\Q(\sqrt{-2},\sqrt{1+\sqrt 2})/\Q(\sqrt{-2}))$ and
$\xi_2$ is the quadratic character of $\Gal(\Q(\sqrt{1+\sqrt {-2}})/\Q(\sqrt{-2}))$.

To motivate the expression for $f$, we list explicit factorizations of the characteristic polynomials of $\text{Frob}_p$ at small primes.

$$\begin{tabular}{|c|c|c|c}\hline
$p$&Char. poly.& Factorization\\ \hline
5&$x^4+4x^2+5^4$&$(x^2-3x\sqrt{-6}-25)(x^2+3x\sqrt{-6}-25)$\\ \hline
7&$x^4+10x^2+7^4$&$(x^2-6x\sqrt{-3}-49)(x^2+6x\sqrt{-3}-49)$\\
\hline 11&$x^4-170x^2+11^4$& $
(x^2-6x\sqrt{-2}-121)(x^2+6x\sqrt{-2}-121)$ \\ \hline
13&$x^4-230x^2+13^4$& $(x^2-6x\sqrt{-3}-13^2)(x^2+6x\sqrt{-3}-13^2)$\\
\hline 17&$x^4-128 x^2+17^4$& $(x^2-15 x \sqrt{-2}-17^2) (x^2+15 x
\sqrt{-2}-17^2)$\\ \hline 19& $(x^2+20 x+19^2)^2$ & $(x^2+20
x+19^2)^2$\\ \hline 23&$x^4-842 x^2+23^4$& $(x^2+6 x\sqrt{-6}-23^2)
(x^2-6 x \sqrt{-6}-23^2)$\\ \hline 29&$ x^4-332x^2+29^4$&
$(x^2+15x\sqrt{-6}-841)(x^2-15x\sqrt{-6}-841)$ \\ \hline
\end{tabular}$$

The congruence form $f(z)$ is a Hecke eigenform in $S_3(\G_0(576),(-6/\cdot))$ whose Fourier coefficients occur at positive integers coprime to $24$. Express it as a linear combination of forms $f_j$ whose Fourier coefficients occur at integers congruent to $j$ mod $24$ as follows:
$$ f(z/24):=f_1+3jk\cdot f_5+6k\cdot f_7+6j\cdot f_{11}+6ik\cdot f_{13}+3ij\cdot f_{17}-4i \cdot f_{19}-6ijk\cdot f_{23},$$ where $i=\sqrt{-1}, j=\sqrt{2}$ and $k=\sqrt{3}$. Here
\begin{eqnarray*}
  f_5=\frac{\eta(z)^5\eta(6z)^5}{\eta(3z)^2\eta(12z)^2},
   &&f_7=\frac{\eta(z)^5\eta(4z)\eta(6z)^2}{\eta(2z)\eta(12z)},\\
   f_{13}=\frac{\eta(z)^4\eta(2z)^2\eta(3z)\eta(12z)}{\eta(4z)\eta(6z)},&&
    f_{23}=\frac{\eta(z)^5\eta(12z)^2}{\eta(6z)},
     \end{eqnarray*} which span a space invariant under $W_9$ and $W_{64}$. The remaining forms
$f_1, f_{11}, f_{17}, f_{19}$ are uniquely determined by applying the Hecke operators to the eigenform $f$. Listed below are their initial Fourier coefficients:
\begin{eqnarray*}
  f_1&=&q^{1/24}(q+ 29q^2 +59q^3+ 20q^4 +40q^5 -49q^6 -270q^7+ 61q^8+\cdots)\\
  f_{11}&=&q^{11/24}(q+ 9q^2+ 4q^3+ 15q^4 -20q^5+ 6q^6 -45q^7+ 6q^8+\cdots)\\
  f_{17}&=&q^{17/24}(5q+ 17q^2+ 18q^3+ 3q^4 -15q^5 -25q^6 -36q^7 -72q^8+\cdots)\\
  f_{19}&=&q^{19/24}(5q+ 10q^2+ 25q^3 -27q^4+ 27q^5 -43q^6 -40q^7 -45q^8+\cdots).
\end{eqnarray*} Choosing different signs of $i,j,k$ gives rise to 8 eigenforms that are conjugate to each other under $G$.

\end{document}